\documentclass[a4paper,reqno,12pt]{amsart}
\usepackage[margin=1.0in]{geometry}
\usepackage{stmaryrd} 
\usepackage{amsthm}
\usepackage{amssymb}
\usepackage{amsmath}
\usepackage{mathtools}
\usepackage{enumitem}
\usepackage{epsfig}
\usepackage{tikz-cd}
\usetikzlibrary{decorations.markings,shapes.geometric,matrix,arrows,positioning}
\usepackage[hidelinks]{hyperref}
\usepackage{microtype}
\usepackage{comment}
\def\MR#1{} 

\makeatletter
\def\SL@eqntext#1{\rlap{\qquad\SL@margintext{#1}}}
\makeatother

\tikzset{anchorbase/.style={baseline={([yshift=-0.5ex]current bounding box.center)}},
  int/.style={thick},
  cross line/.style={preaction={draw=white,line width=6pt,-}},
  wall/.style={thin,double,blue},
  middlearrow/.style={postaction=decorate,decoration={markings,mark=at
    position .55 with {\arrow{stealth};}}},
  middlearrowrev/.style={postaction=decorate,decoration={markings,mark=at
    position .55 with {\arrowreversed{stealth};}}},
  ev/.style={shape=rectangle, draw}
}

\newcommand{\tcoev}{\stackrel{\longleftarrow}{\operatorname{coev}}}
\newcommand{\tev}{\stackrel{\longleftarrow}{\operatorname{ev}}}
\newcommand{\ev}{\stackrel{\longrightarrow}{\operatorname{ev}}}
\newcommand{\coev}{\stackrel{\longrightarrow}{\operatorname{coev}}}
\newcommand{\p}[1]{\ensuremath{\bar {#1}}}

\newcommand{\C}{\mathbb{C}}
\newcommand{\R}{\mathbb{R}}
\newcommand{\Z}{\mathbb{Z}}
\newcommand{\I}{\sqrt{-1}}
\newcommand{\unit}{\mathbb{I}}
\newcommand{\q}{\mathfrak{q}}

\newcommand{\cat}{\mathcal{C}}
\newcommand{\FR}{\mathsf{Z}}
\newcommand{\Gr}{\mathsf{G}}
\newcommand{\SSS}{\mathsf{X}}
\newcommand{\Hom}{\textup{\text{Hom}}}
\newcommand{\End}{\textup{\text{End}}}
\newcommand{\id}{\textup{\text{id}}}
\newcommand{\qd}{\mathsf{d}}
\newcommand{\qdim}{\textup{\text{qdim}}}
\newcommand{\mt}{\mathsf{tr}}
\newcommand{\sch}{\operatorname{sch}}
\newcommand{\Ztwo}{\mathbb{Z} \slash 2 \mathbb{Z}}
\newcommand{\ptr}{\operatorname{ptr}}

\newcommand{\height}{\operatorname{ht}}

\newcommand{\TQFT}{\mathcal{Z}}
\newcommand{\Zhat}{\widehat{Z}}
\newcommand{\op}{\textnormal{op}}
\newcommand{\Span}{\textnormal{Span}}

\newcommand{\g}{\mathfrak{g}}
\newcommand{\Cart}{\mathfrak{h}}
\newcommand{\osp}{\mathfrak{osp}}
\newcommand{\ospn}{\osp(2 \vert 2n)}

\newcommand{\qhospn}{U^{H}_q(\ospn)}
\newcommand{\qhospnres}{\overline{U}^{H}_q(\ospn)}

\newcommand{\sdim}{\operatorname{sdim}}

\newcommand{\qbin}[2]{\left\lbrack\begin{matrix}
    #1\\#2
\end{matrix}\right\rbrack}

\newtheorem{Lem}{Lemma}[section]
\newtheorem{Prop}[Lem]{Proposition}

\newtheorem{Def}[Lem]{Definition}

\theoremstyle{plain}
\newtheorem{Thm}[Lem]{Theorem}

\newtheorem{Cor}[Lem]{Corollary}

\theoremstyle{definition}
\newtheorem{examplex}[Lem]{Example}
\newenvironment{Ex}
  {\pushQED{\qed}\examplex}
  {\popQED\endexamplex}

\newtheorem{Rem}[Lem]{Remark}
  
\newcommand{\epsh}[2]
         {\begin{array}{c} \hspace{-1.3mm}
        \raisebox{-4pt}{\epsfig{figure=#1,height=#2}}
        \hspace{-1.9mm}\end{array}}

\title{Relative modular categories from $\mathfrak{osp}(2 \vert 2n)$}

\author[S. Porter]{Seth Porter}
\address{Department of Mathematics and Statistics \\ Utah State University\\
Logan, Utah 84322 \\ USA}
\email{seth.porter@usu.edu}

\author[M.\,B. Young]{Matthew B. Young}
\address{Department of Mathematics and Statistics \\ Utah State University\\
Logan, Utah 84322 \\ USA}
\email{matthew.young@usu.edu}

\date{\today}

\keywords{Representation theory of quantum supergroups. Topological field theory.}
\subjclass[2020]{Primary: 17B37; Secondary 57R56.}

\begin{document}

\begin{abstract}
We construct relative modular categories from the weighted representation theory of the unrolled quantum group of the orthosymplectic Lie superalgebra $\mathfrak{osp}(2 \vert 2n)$.
\end{abstract}

\maketitle

\setcounter{tocdepth}{1}

\tableofcontents

\section*{Introduction}
\addtocontents{toc}{\protect\setcounter{tocdepth}{1}}

A central role in quantum topology is played by the Reshetikhin--Turaev construction, which produces from a modular tensor category $\cat$ a $3$-dimensional topological quantum field theory (TQFT) $\TQFT_{\cat}$ \cite{reshetikhin1991}. Recall that a modular tensor category is a finite semisimple ribbon category which satisfies an additional nondegeneracy (modularity) condition. The main classes of examples of modular tensor categories arise from the representation theory of twisted Drinfeld doubles of finite groups \cite{dijkgraaf1990b} and the semisimplified representation theory of small quantum groups of complex simple Lie algebras at suitable roots of unity \cite{andersen1995}. The resulting TQFTs provide rigorous mathematical models of Dijkgraaf--Witten \cite{dijkgraaf1990} and Chern--Simons quantum field theories \cite{witten1989}, respectively.

Recent years have seen a wealth of activity devoted to extending techniques, constructions and examples in quantum topology beyond the traditional finite semisimple regime, leading to what is now known as non-semisimple quantum topology. Most relevant to the present paper is the construction of De Renzi, building on earlier works of Blanchet, Costantino, Geer and Patureau-Mirand \cite{costantino2014,blanchet2016}, which produces from a relative modular category $\cat$ a $3$-dimensional decorated TQFT $\TQFT_{\cat}$ \cite{derenzi2022}; if $\cat$ is in fact modular, then $\TQFT_{\cat}$ is the Reshetikhin--Turaev TQFT from above. Relative modular categories are non-semisimple, non-finite generalizations of modular tensor categories. The resulting TQFT $\TQFT_{\cat}$ is decorated in the sense that, for example, it assigns numerical invariants to closed oriented $3$-manifolds together with the data of a suitably generic flat connection (with structure group determined by the defining data of $\cat$). TQFTs arising from relative modular categories are strictly stronger than those arising from modular tensor categories. For example, the former are able to distinguish homotopy classes of lens spaces while the latter cannot \cite{blanchet2016}.

At present, all known examples of relative modular categories arise from the representation theory of restricted unrolled quantum groups, specifically for the following complex Lie (super)algebras: simple Lie algebras \cite{derenzi2020}, special linear Lie superalgebras $\mathfrak{sl}(n \vert m)$, $n \neq m$, \cite{anghel2021}, certain Lie superalgebras with abelian bosonic part, such as $\mathfrak{gl}(1 \vert 1)$, \cite{geer2025,garner2025} and the orthosymplectic Lie superalgebra $\osp(1 \vert 2)$ \cite{costantino2026}. The resulting decorated TQFTs have been matched to physical quantum field theories in the case of $\mathfrak{sl}(2)$ and Lie superalgebras with abelian bosonic part, where the relevant theories are the A-type topological twist of $\mathcal{N}=4$ Chern--Simons-matter theory with gauge group $SU(2)$ \cite{creutzig2024} and torus-gauged Rozansky--Witten theories of linear holomorphic symplectic manifolds \cite{geer2025,garner2025}, respectively. It is an open problem to determine the physical origins of $\TQFT_{\cat}$ for more general Lie superalgebras.

In this paper, we construct another family of relative modular categories from Lie superalgebras. To place this family in proper context, recall that a basic classical Lie superalgebra is a simple complex Lie superalgebra $\mathfrak{g}$ whose bosonic subalgebra $\g_{\p 0}$ is reductive and for which $\g$ admits a nondegenerate $\g$-invariant even supersymmetric bilinear form. Basic classical Lie superalgebras, which are in many ways the natural super analogues of complex semisimple Lie algebras, naturally divide into two types according to whether the fermionic summand $\g_{\p 1}$ is a direct sum of two simple $\g_{\p 0}$-modules (Type I) or is a simple $\g_{\p 0}$-module (Type II). Kac classified Type I Lie superalgebras into three\footnote{Despite not being simple, the Lie superalgebra $\mathfrak{gl}(m \vert n)$ is sometimes declared to be basic classical, in which case it is of Type I. With this convention, there is a fourth family of Type I Lie superalgebras.} families \cite{kac1977}:
\begin{enumerate}
\item $\mathfrak{sl}(m \vert n)$, $m > n \geq 1$,
\item $\mathfrak{psl}(n \vert n)$, $n \geq 2$,
\item $\osp(2 \vert 2n)$, $n \geq 2$.
\end{enumerate}
As stated above, relative modular categories have been constructed from the first family \cite{anghel2021}. The second and third families have not been treated. Our main result is the construction of relative modular categories from the third family.

To state our main result, we begin with the unrolled quantum group of $\ospn$:
\[
\qhospnres
:=
\overline{U}_q(\ospn) \rtimes \C[\Cart].
\]
Here $\overline{U}_q(\ospn)$ is the restricted quantum group of $\ospn$ at the root of unity $q=e^{\frac{2 \pi i}{r}}$, wherein $r$\textsuperscript{th} powers of the even root vectors are set to zero, and the group algebra $\cat[\Cart]$ of the Cartan subalgebra acts on $\overline{U}_q(\ospn)$ by commuting with the group-like Cartan generators and by the classical Lie superalgebra relations with the root generators. A detailed construction of $\qhospnres$ as a Hopf superalgebra is presented in Section \ref{sec:qGroupOSP}. A weight $\qhospnres$-module is a $\qhospnres$-module on which $\C[\Cart]$ acts semisimply and such that the action of each group-like Cartan generator is the appropriate $q$-power of the action of the associated Cartan element.

\newtheorem*{ltmp}{Theorem \ref{thm:modularity}}
\begin{ltmp}
Assume that $q = e^{\frac{2 \pi i}{r}}$ for an odd integer $r$ which does not divide $n$, then the category $\cat(r,n)$ of weight $\qhospnres$-modules admits a relative modular structure.
\end{ltmp}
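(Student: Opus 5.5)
The plan is to verify the axioms of a relative modular category in De Renzi's sense \cite{derenzi2022}, following the strategy of \cite{derenzi2020,anghel2021}. The data are as follows. The structure group is $\Gr = \Cart^*/\Lambda_R$ (or its quotient by the root lattice), and the grading sends a weight module to the class of its weights. The free realization is $\FR \cong \Z^{\mathrm{rk}}$, acting by the one-dimensional modules $\C^H_{r\lambda}$ of weights in $r$ times the weight lattice. The small symmetric subset $\SSS \subset \Gr$ is the union of the images of the hyperplanes on which typical modules fail to exist.

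The verification proceeds in the following order.

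\textbf{Ribbon structure.} First we show that $\cat(r,n)$ is a $\C$-linear pivotal, braided, ribbon supercategory. The braiding comes from the truncated $R$-matrix $\mathcal{R} = q^{H}\check{R}$, with $\check R$ the finite product over positive root vectors; since $r$-th powers of the even root vectors vanish and the odd ones are nilpotent, this product is finite. The pivotal element is $K_{2\rho}$, where $\rho$ is the super Weyl vector.

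\textbf{Grading and free realization.} Next we check that $\cat$ is $\Gr$-graded and that $\FR$ acts freely on simples with trivial self-braiding up to a character. This requires that $q^{r\langle\lambda,\mu\rangle}$ behave correctly, which is where oddness of $r$ enters.

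\textbf{Semisimplicity of generic degrees.} We show that for $\bar g \notin \SSS$ the category $\cat_{\bar g}$ is semisimple, with simples the typical Kac modules $V(\lambda)$ induced from the parabolic with Levi $\g_{\p 0}$. Since $\ospn$ is of Type I, $V(\lambda) = \mathrm{Ind}\,L_0(\lambda)$, where $L_0(\lambda)$ is a simple module for the restricted unrolled $U_q(\mathfrak{gl}(1)\oplus\mathfrak{sp}(2n))$. Such a module is simple and projective when $\lambda$ is generic for both the bosonic part and the atypicality conditions $\langle \lambda+\rho, \alpha\rangle \notin \Z$ at odd roots $\alpha$. Because projective simples split off every tensor product, we obtain semisimplicity.

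\textbf{Modified trace.} We construct the modified trace on the ideal of projectives, using the results of Geer--Kujawa--Patureau-Mirand: the ideal of projectives carries a unique nontrivial trace, and $V(\lambda)$ is ambidextrous. We then compute the modified dimension explicitly as a product over odd and even positive roots, of the form
\[
\qd(\lambda) = \prod_{\alpha\in\Delta^+_{\p 0}}\frac{\{\langle\lambda+\rho,\alpha\rangle\}}{\{r\langle\lambda+\rho,\alpha\rangle\}}\Big/\prod_{\alpha\in\Delta^+_{\p 1}}\{\langle\lambda+\rho,\alpha\rangle\},
\]
up to normalization.

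\textbf{Nondegeneracy.} We verify the relative modularity condition: there is a nonzero $\Delta_0$ such that the Kirby color $\Omega_{\bar g}$ satisfies the transparency/killing identity. Following \cite{derenzi2020}, this reduces to a Gauss-sum computation. We first compute the stabilization coefficients $\Delta_\pm$ from the twists $\theta_{V(\lambda)} = q^{\langle\lambda,\lambda+2\rho\rangle}$. Then, for a generic $\mu$, we show that the sum $\sum_{\lambda}\qd(\lambda)S(\lambda,\mu)$ over a fundamental domain of $\FR$ in $\bar g$ vanishes unless $V(\mu)$ is transparent, i.e.\ unless $\mu$ lies in the appropriate lattice.

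We expect the last step to be the main obstacle. The $S$-matrix entry is a super-Weyl-type character ratio. Summing over weights in $\lambda + r\cdot(\text{weight lattice})$ modulo $r$ factorizes into one-dimensional Gauss sums $\sum_{k \bmod r} q^{k^2 a + kb}$, and these are nonzero precisely when $\gcd(2a,r)=1$. The $\mathfrak{gl}(1)$-direction of $\ospn$ carries a form whose normalization involves $n$: the super-dimension term $2-2n$ and the pairing of the distinguished odd roots. We therefore expect the nonvanishing of the relevant Gauss sum, and the identification of transparent objects with $\FR$, to require exactly $r$ odd and $r\nmid n$. We would first compute the case $n=2$ by hand to pin down the correct lattice, and then generalize by diagonalizing the bilinear form on $\Cart^*$ restricted to the torus $\Cart^*/r\Lambda$.
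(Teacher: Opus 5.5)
Your outline has the same overall architecture as the paper (Kac modules, a truncated $R$-matrix, a modified trace on projectives). However, the decisive step, relative modularity, is not established by what you propose, and your guess about where $r\nmid n$ enters is wrong.

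\textbf{The modularity step.} The condition of Definition~\ref{def:modG} concerns the endomorphism $f$ of $V_i\otimes V_j^{\vee}$ obtained by encircling it with $\Omega_h$. This object lies in $\cat_{[0]}$, and $[0]\in\SSS$, so $V_i\otimes V_j^{\vee}$ is projective but not semisimple. Showing that $\sum_\lambda \qd(\lambda)S(\lambda,\mu)$ vanishes on generic simple $V(\mu)$ (which lie outside $\cat_{[0]}$) only controls scalars on simple objects. It says nothing about a possible nilpotent part of $f$ on the indecomposable projective summands of $V_i\otimes V_j^{\vee}$.

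The missing idea is the one the paper uses. First, $f$ is transparent \cite[Lem.~5.9]{costantino2014}. Second, every transparent endomorphism of an object of $\cat_{[0]}$ factors through one-dimensional objects $\sigma(z)$ (Lemma~\ref{lem:transparentFactor}). That lemma is proved by braiding against a \emph{simple projective object of degree $[0]$}, namely $K((r-1)\rho_{\p 0})$, and using its PBW basis to show that all nontrivial $E$- and $F$-monomials kill $f(v)$.

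This is exactly where the hypothesis is used. For $\alpha\in\Delta^+_{\p 1}$ one has $\langle (r-1)\rho_{\p 0}+\rho,\alpha\rangle\equiv n \pmod r$, so $K((r-1)\rho_{\p 0})$ is typical if and only if $r\nmid n$ (Corollary~\ref{cor:dimVrho}). Generic semisimplicity then forces $z_i=0$ and $V_i\simeq V_j$. The scalar $\zeta$ comes from a modified-trace computation in which the $q$-exponentials in \eqref{eq:longHopfDim} cancel term by term, giving $\zeta=\vert\Lambda_R/\Lambda_{\FR}\vert=r^{n+1}$ with no Gauss sum at all.

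Gauss sums enter only in $\Delta_\pm$ (Section~\ref{sec:stabCoeff}). There the isotropic $\alpha_0$-direction contributes $\sum_{l}q^{2lk_1}=r\delta_{k_1,0}$, and the remainder is a nondegenerate $\mathfrak{sp}(2n-2)$ Gauss sum. These are nonzero for every odd $r$, so your proposed mechanism could not detect the condition $r\nmid n$.

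\textbf{Other failures in the set-up.}

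\emph{Pivotal element.} $K_{2\rho}$ (in the paper's Hopf conventions, $K_{-2\rho}$) does not give a ribbon structure. The lowest weight of $K(\lambda)$ is $\lambda-2(r-1)\rho_{\p 0}-2\rho_{\p 1}=\lambda-\pi$, so $K(\lambda)^{\vee}\simeq K(-\lambda+\pi)$. With the pivot $K_{-2\rho}$, the scalars of $\theta_{K(\lambda)^{\vee}}$ and $\theta_{K(\lambda)}$ differ by $q^{4r\langle\rho_{\p 0},\lambda\rangle}$ up to a constant, which is not $1$ for generic $\lambda$. One must multiply by the central group-like element $K_{2r\rho_{\p 0}}$, i.e.\ use $K_\pi$ with $\pi=2r\rho_{\p 0}-2\rho$. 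This yields $\theta=q^{-\langle\lambda-\pi,\lambda\rangle}$ rather than your $q^{\langle\lambda,\lambda+2\rho\rangle}$.

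\emph{Free realization.} It must take values in $\cat_{[0]}$, but $w_0=\delta_0\notin\Lambda_R$, so $r\Lambda_W\not\subset\Lambda_R$ for odd $r$. The paper uses $\Lambda_{\FR}=r\Lambda_W\cap\Lambda_R$ instead. It also adds a $\Ztwo$ factor for parity shifts; without it, $\Theta(g)$ must contain both parities of each Kac module.

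\emph{Generic semisimplicity.} Your argument presupposes that typical Kac modules with simple bosonic Verma part are projective, which you do not justify. The paper goes the other way: it proves semisimplicity of $\cat_{[\lambda]}$ first, via the finite-dimensional quotient $\overline{U}^{[\lambda]}_q(\ospn)$, the density theorem and a dimension count, and then deduces projectivity (Proposition~\ref{prop:kacProjective}).
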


The proof of Theorem \ref{thm:modularity} is spread across Section \ref{sec:repOSP}. Key aspects of the proof include:
\begin{itemize}
\item Theorem \ref{thm:genericSemiSimple}: The category $\cat(r,n)$ is generically semisimple with respect to its canonical grading by $\Cart^{\vee}$ modulo the root lattice. Generic simple modules are those Kac modules which are induced from simple Verma $\overline{U}_q^H(\ospn_{\p 0})$-modules.
\item Corollary \ref{cor:braidedCat}: The category $\cat(r,n)$ inherits a braiding by truncating Yamane's universal $R$-matrix for the $h$-adic quantum group $U_h(\ospn)$ \cite{yamane1994}.
\item Proposition \ref{prop:unimodularity}: The category $\cat(r,n)$ is unimodular. In particular, $\cat(r,n)$ admits a nondegenerate modified trace on its projective ideal.
\item Theorem \ref{thm:modularity}: The category $\cat(r,n)$ admits a relative modularity parameter.
\end{itemize}
In the process of proving Theorem \ref{thm:modularity}, we provide explicit formulae for ribbon twists (Proposition \ref{prop:ribCat}) and modified quantum dimensions (Corollary \ref{cor:mtraceNormalized}) of generic Kac modules. We also explicitly compute the stabilization coefficients $\Delta_{\pm}$ of $\cat(r,n)$ (Section \ref{sec:stabCoeff}).

Theorem \ref{thm:modularity} together with De Renzi's construction produces a $3$-dimensional decorated TQFT $\TQFT_{\cat(r,n)}$. The explicit control of the representation theory of $\qhospnres$, as detailed in the previous paragraph, makes $\TQFT_{\cat(r,n)}$ ripe for calculations. For example, the value of $\TQFT_{\cat(r,n)}$ on plumbed $3$-manifolds and circle bundles over closed surfaces can be computed as in \cite{costantino2026}.

It is an interesting problem to extend the construction of relative modular categories to Type II Lie superalgebras, such as $\osp(m \vert 2n)$, $m \neq 2$, and $D(2 \vert 1 ; \alpha)$, $\alpha \in \C \setminus \{0,-1\}$. At present, only the case $\osp(1 \vert 2)$ has been treated \cite{costantino2026}; it is likely that a similar direct approach will extend to $\osp(1 \vert 2n)$. The family $\osp(1 \vert 2n)$ is rather special among Lie superalgebras, being the only family whose classical representation theory is semisimple \cite{djokovic1976}. A key issue in the Type II case is that Kac modules are, in general, not simple. Rather, the generic simple modules have dimension strictly smaller than that of a Kac module, spoiling the strategy of proof of generic semisimplicity given in Theorem \ref{thm:genericSemiSimple}. 

We close by offering an additional piece of motivation for considering quantum invariants associated with Lie superalgebras. With a view toward categorifying $\mathfrak{sl}(2)$ Reshetikhin--Turaev invariants, Gukov, Pei, Putrov and Vafa recently introduced a new $\mathbf{q}$-series invariant of (certain classes of) $3$-manifolds, known as $\Zhat^{\mathfrak{sl}(2)}$-invariants \cite{gukov2020}. More generally, invariants $\Zhat^{\g}$ associated with a simple Lie algebra $\g$ were introduced by Park \cite{park2020} and their relation to Reshetikhin--Turaev invariants established by Murakami and Terashima \cite{murakami2024,murakami2026}. Finally, Ferrari and Putrov introduced $\Zhat^{\g}$-invariants for certain Lie superalgebras $\g$ \cite{ferrari2024}. Since Reshetikhin--Turaev invariants do not exist for Lie superalgebras, it is natural to seek a general relation between $\Zhat^{\g}$-invariants and the $3$-manifold invariants defined by a relative modular category associated with $\g$. Such a relation has been established for $\mathfrak{sl}(2)$ \cite{costantino2023}, $\mathfrak{sl}(2 \vert 1)$ \cite{ferrari2024} and $\osp(1 \vert 2)$ \cite{costantino2026}. The results of this paper establish the existence of the latter invariants for $\ospn$, which can then be compared with the $\Zhat^{\ospn}$-invariants explored by Chae \cite{chae2021}.

\subsection*{Acknowledgments}
The authors thank Francesco Costantino, Nathan Geer, Matthew Harper, Bertrand Patureau-Mirand and Adam Robertson for discussions. M.\,Y. is partially supported by National Science Foundation grants DMS-2302363 and DMS-2440471 and Simons Foundation Collaboration Grant for Mathematicians (Award ID 853541).  M.\, Y. thanks Universit\'{e} Toulouse III Paul Sabatier, where part of this research was completed, for support and hospitality.

\section{Reminders on relative modular categories}
\addtocontents{toc}{\protect\setcounter{tocdepth}{1}}
\label{sec:background}

\subsection{Ribbon categories}
\label{sec:monCat}

The reader is referred to \cite{etingof2015} for detailed background on monoidal categories.

Let $\cat$ be a $\C$-linear abelian monoidal category. We assume that the tensor functor $\otimes: \cat \times \cat \rightarrow \cat$ is $\C$-bilinear and the monoidal unit $\unit \in \cat$ is simple. If $\cat$ is rigid, braided and a compatible twist, then $\cat$ is a \emph{ribbon category}. Basic morphisms in $\cat$ are depicted as follows:
\[
\id_V
=
\begin{tikzpicture}[anchorbase]
\draw[->,thick] (0,0) -- node[left] {\small$V$} (0,1);
\end{tikzpicture}
\qquad ,\qquad
\id_{V^{\vee}}
=
\begin{tikzpicture}[anchorbase]
\draw[<-,thick] (0,0) -- node[left] {\small$V$} (0,1);
\end{tikzpicture}
\]
\[
\tev_V
=
\begin{tikzpicture}[anchorbase]
\draw[->,thick] (0,0)  arc (0:180:0.5 and 0.75);
\node at (-1.4,0)  {$V$};
\end{tikzpicture}
\qquad, \qquad
\tcoev_V
=
\begin{tikzpicture}[anchorbase]
\draw[<-,thick] (0,0)  arc (180:360:0.5 and 0.75);
\node at (-0.5,-0.1)  {$V$};
\end{tikzpicture}
\]
\[
\ev_V
=
\begin{tikzpicture}[anchorbase]
\draw[<-,thick] (0,0)  arc (0:180:0.5 and 0.75);
\node at (0.4,0)  {$V$};
\end{tikzpicture}
\qquad, \qquad
\coev_V
=
\begin{tikzpicture}[anchorbase]
\draw[->,thick] (0,0)  arc (180:360:0.5 and 0.75);
\node at (1.5,-0.1)  {$V$};
\end{tikzpicture}
\]
\[
c_{V,W}
=
\begin{tikzpicture}[anchorbase]
\draw[->,thick] (0.5,0) -- node[right,near start] {\small $W$} (0,1);
\draw[->,thick,cross line] (0,0) -- node[left,near start] {\small$V$} (0.5,1);
\end{tikzpicture}
\qquad, \qquad
\theta_V
=
\begin{tikzpicture}[anchorbase]
\draw[->,thick,rounded corners=8pt] (0.25,0.25) -- (0,0.5) -- (0,1);
\draw[thick,rounded corners=8pt,cross line] (0,0) -- (0,0.5) -- (0.25,0.75);
\draw[thick] (0.25,0.75) to [out=30,in=330] (0.25,0.25);
\node at (-0.2,0.2)  {$V$};
\end{tikzpicture}.
\]
We read diagrams left to right and bottom to top. For example, the braiding and twists are morphisms $c_{V,W}: V \otimes W \rightarrow W \otimes V$ and $\theta_V: V \rightarrow V$, respectively.

\subsection{Relative modular categories}
\label{sec:relModCat}

The reader is referred to \cite{derenzi2022} for details on relative modular categories.

Let $\cat$ be a $\C$-linear abelian monoidal category. Recall that an \emph{ideal} of $\cat$ is a full subcategory which is closed under retracts and tensor product (on either side) by objects of $\cat$. The full subcategory of projective objects of $\cat$ is an example of an ideal. Given $V , W \in \cat$, the right partial trace along $W$ is the map $\ptr_W : \End_{\cat}(V \otimes W) \rightarrow \End_{\cat}(V)$ defined by
\[
\ptr_W(f)
=
(\id_V \otimes \ev_W) \circ (f \otimes \id_{W^{\vee}}) \circ (\id_V \otimes \tcoev_W).
\]

\begin{Def}
\label{def:mtrace}
\begin{enumerate}
\item A \emph{modified trace} on an ideal $\mathcal{I} \subset \cat$ is the data of $\C$-linear functions 
$\mt=\{\mt_V:\End_\cat(V) \rightarrow \C \}_{V \in \mathcal{I}}$ which satisfy:
\begin{enumerate}
\item $\mt_V(f \circ g)=\mt_W(g \circ f)$ for all $f: W \rightarrow V$ and $g: V \rightarrow W$ in $\mathcal{I}$.
\item $\mt_{V\otimes W}(f)=\mt_V(\ptr_W(f))$ for all $V \in \mathcal{I}$, $W \in \cat$ and $f\in\End_{\cat}(V\otimes W)$.
\end{enumerate}

\item The \emph{modified dimension} of $V\in \mathcal{I}$ is $\qd(V)=\mt_V(\id_V)$.
\end{enumerate}
\end{Def}

\begin{Def}
\begin{enumerate}
\item A set $\mathcal{E}=\{ V_j \mid j \in J \}$ of objects of $\cat$ is \emph{dominating} if for any $V \in \cat$ there exist indices $\{j_1,\dots,j_m \} \subset J $ and morphisms $\iota_k \in \Hom_{\cat}(V_{j_k},V)$ and $s_k \in \Hom_{\cat}(V,V_{j_k})$ such that $\id_{V}=\sum_{k=1}^m \iota_k \circ s_k$.

\item A dominating set $\mathcal{E}$ is \emph{completely reduced} if $\dim_\C \Hom_{\cat}(V_j,V_k)=\delta_{jk}$ for all $j,k \in J$.
\end{enumerate}
\end{Def}

\begin{Def}
\label{def:freeReal}
A \emph{free realization} of an abelian group $\FR$ on a ribbon category $\cat$ is a monoidal functor $\sigma: \FR \rightarrow \cat$ such that
\begin{enumerate}
\item $\sigma(0) = \unit$,
\item $\theta_{\sigma(z)}=\id_{\sigma(z)}$ for all $z \in \FR$, and
\item if $V \otimes \sigma(z) \simeq V$ for a simple object $V \in \cat$, then $z=0$.
\end{enumerate}
\end{Def}

We often identify the functor $\sigma: \FR \rightarrow \cat$ with the set of objects $\sigma(\FR) := \{\sigma(z) \mid z \in \FR\}$.

\begin{Def}
\label{def:Gstr}
Let $(\Gr,+)$ be an abelian group.
\begin{enumerate}
\item A subset $\SSS \subset \Gr$ is \emph{symmetric} if $\SSS=-\SSS$ and \emph{small} if $\bigcup_{i=1}^s (g_i+\SSS) \neq \Gr$ for all $g_1,\ldots ,g_s\in \Gr$.

\item A \emph{$\Gr$-grading} of a ribbon category $\cat$ is a decomposition $\cat \simeq \bigoplus_{g \in \Gr} \cat_g$ into full abelian subcategories such that
\begin{enumerate}
\item $\mathbb{I} \in \cat_0$,
\item if $V\in\cat_g$, then $V^{\vee}\in\cat_{-g}$, and
\item if $V\in\cat_g$ and $V^{\prime} \in \cat_{g^{\prime}}$, then $V\otimes V^{\prime}\in\cat_{g+g^{\prime}}$.
\item A $\Gr$-graded category $\cat$ is \emph{generically semisimple} if there exists a small symmetric subset $\SSS \subset \Gr$ such that $\cat_g$ is semisimple whenever $g \in \Gr \setminus \SSS$.
\end{enumerate}
\end{enumerate}
\end{Def}

\begin{Def}
\label{def:preMod}
Let $\Gr$ and $\FR$ be abelian groups, $\SSS \subset \Gr$ a small symmetric subset and $\cat$ a $\Gr$-graded ribbon category with free realization $\sigma : \FR \rightarrow \cat_0$ and non-zero modified trace $\mt$ on its projective ideal. Then $\cat$ is a \emph{pre-modular $\Gr$-category relative to $(\FR,\SSS)$} if it has the following properties:
\begin{enumerate}
\item \emph{Generic semisimplicity}: \label{def:genSS} For each $g \in \Gr \setminus \SSS$, there exists a finite set of simple objects $\Theta(g):=\{ V_i \mid i \in I_g  \}$ such that
\[
\Theta(g) \otimes \sigma(\FR):=\{ V_i \otimes \sigma(z) \mid i \in I_g, \; z \in \FR \}
\]
is a completely reduced dominating set of $\cat_g$.

\item
\label{def:compat}
There exists a bicharacter $\psi: \Gr \times \FR \rightarrow \C^{\times}$ such that
  \begin{equation*}
    \label{eq:psi}
    c_{\sigma(z),V}\circ c_{V,\sigma(z)}= \psi(g,z) \cdot  \id_{V \otimes \sigma(z)}
  \end{equation*}
for all $g\in \Gr$, $V \in \cat_g$ and $z \in \FR$.
\end{enumerate}
\end{Def}
 
\begin{Def}
\label{def:ndeg}
Let $\cat$ be a pre-modular $\Gr$-category relative to $(\FR,\SSS)$.
\begin{enumerate}
\item The \emph{Kirby color of index $g \in \Gr \setminus \SSS$} is $\Omega_g:= \sum_{V \in \Theta(g)}\qd(V) \cdot V$.
\item The \emph{stabilization coefficients} $\Delta_\pm\in \C$ are defined by the skein equivalences
\[
\epsh{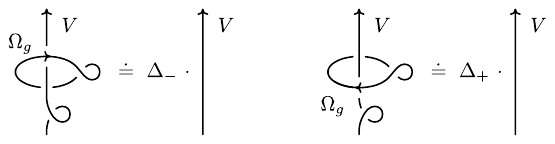}{15ex}
\]
for any $g \in \Gr \setminus \SSS$ and $V\in \cat_g$.
\end{enumerate}
\end{Def}

\begin{Def}
\label{def:modG}
A \emph{modular $\Gr$-category relative to $(\FR,\SSS)$} is a pre-modular category $\mathcal{C}$ for which there exists a scalar $\zeta \in \C^{\times}$, the \emph{relative modularity parameter}, which satisfies
\begin{equation}\label{eq:mod}
    \epsh{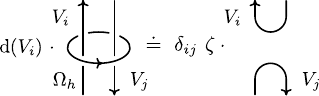}{10ex}
\end{equation}
for all $g,h \in \Gr \setminus \SSS$ and $i,j \in I_g$.
\end{Def}

\section{Quantum groups associated with $\ospn$}
\label{sec:qGroupOSP}

\subsection{Conventions}

Unless mentioned otherwise, the ground field is $\C$. Write $\Ztwo$ for the commutative ring $\{\p 0, \p 1\}$.

We follow the superalgebra conventions of \cite[I-Supersymmetry]{deligne1999}. The adjective `super' is omitted when it will not cause confusion. The parity of a homogeneous vector $v$ of a super vector space $V=V_{\p 0} \oplus V_{\p 1}$ is $\p v \in \Ztwo$. Morphisms of super vector spaces are parity-preserving linear maps. The tensor product of super vector spaces $V$ and $W$ is the tensor product of their underlying vector spaces with $\Ztwo$-grading
\[
(V \otimes W)_{\p p} = \bigoplus_{\p s + \p t = \p p} V_{\p s} \otimes W_{\p t}.
\]

Let $A$ be a superalgebra. The super commutator of homogeneous elements $a,b \in A$ is $[a,b] = ab -(-1)^{\p a \p b} ba$. A (left) super $A$-module is a super vector space $M$ with an $A$-module structure whose action map $A \otimes M \rightarrow M$ is a morphism of super vector spaces. When $\End_A(M) = \C \cdot \id_M$, the scalar by which $f \in \End_A(M)$ acts is denoted $\langle f \rangle \in \C$.

\subsection{The orthosymplectic Lie superalgebra}
\label{sec:ospn}

Let $n$ be a positive integer. Let $\ospn$ be the  orthosymplectic Lie superalgebra of type $\textnormal{C}(n+1)$ in Kac's classification \cite[\S 2.1.2]{kac1977}. Concretely, consider the supersymmetric bilinear form on $\C^{2 \vert 2n}$ determined by the block diagonal matrix $\left(
\begin{smallmatrix}
J^{\textnormal{symm}} & 0 \\
0 & J^{\textnormal{skew}}
\end{smallmatrix}
\right)
$,
where
\[
J^{\textnormal{symm}}
=
\left(
\begin{matrix}
0 & 1 \\
1 & 0
\end{matrix}
\right),
\qquad
J^{\textnormal{skew}}
=
\left(
\begin{matrix}
0 & \mathbf{1}_n \\
-\mathbf{1}_n & 0
\end{matrix}
\right)
\]
and $\mathbf{1}_n$ is the $n \times n$ identity matrix. Then $\ospn$ is the Lie subalgebra of block matrices $\left( \begin{smallmatrix} A & B \\ C & D \end{smallmatrix} \right) \in \mathfrak{gl}(\C^{2 \vert 2n})$ which satisfy 
\[
A^t J^{\textnormal{symm}} + J^{\textnormal{symm}} A = 0,
\qquad
B^t J^{\textnormal{symm}} - J^{\textnormal{skew}} C = 0,
\qquad
D^t J^{\textnormal{skew}} + J^{\textnormal{skew}} D = 0.
\]
The even Lie subalgebra of $\ospn$ is $\ospn_{\p 0} \simeq \mathfrak{so}(2) \oplus \mathfrak{sp}(2n)$.

We take for the Cartan subalgebra $\Cart$ the subspace of all diagonal matrices in $\ospn$. The dual space $\Cart^{\vee} = \Hom_{\C}(\Cart,\C)$ has basis $\{\delta_0, \dots, \delta_n\}$, where $\delta_0$ (resp. $\delta_i$, $1 \leq i \leq n$) records the first (resp. $(2+i)$\textsuperscript{th}) diagonal entry of an element of $\Cart$. With this notation, the positive roots $\Delta^+ = \Delta^+_{\p 0} \sqcup \Delta^+_{\p 1}$ are
\[
\Delta_{\p 0}^+ =
\{\delta_i \pm \delta_j \mid 1 \leq i < j \leq n\} \sqcup \{2 \delta_i \mid 1 \leq i \leq n\},
\]
\[
\Delta_{\p 1}^+ =
\{\delta_0 \pm \delta_i \mid 1 \leq i \leq n\}
\]
and the simple positive roots $\Pi^+ = \Pi^+_{\p 0} \sqcup \Pi^+_{\p 1}$ are
\[
\Pi_{\p 0}^+ = \{\alpha_i := \delta_i-\delta_{i+1} \mid 1 \leq i \leq n-1\} \sqcup \{\alpha_n := 2 \delta_n\},
\]
\[
\Pi_{\p 1}^+ = \{\alpha_0 := \delta_0 - \delta_1\}.
\]
The half-sums of the positive even and odd roots are 
\[
\rho_{\p 0}
=
\frac{1}{2} \sum_{\alpha \in \Delta_{\p{0}}^+} \alpha
= \sum_{i=1}^n (n-i+1)\delta_i,
\qquad
\rho_{\p 1}
=
\frac{1}{2} \sum_{\alpha \in \Delta_{\p 1}^+} \alpha
= n \delta_0
\]
so that the super Weyl vector is
\[
\rho := \rho_{\p0} - \rho_{\p 1} = -n \delta_0 + \sum_{i=1}^n (n-i+1)\delta_i.
\]
Write $\epsilon(\alpha) = (-1)^{\p \alpha} \in \{\pm 1\}$ for the (multiplicative) parity of a root $\alpha \in \Delta$.

Let $\ospn_{\pm 1} \subset \ospn$ be the subspace spanned by the root vectors associated with $\Delta^{\pm}_{\p 1}$. Writing $\ospn_0 = \ospn_{\p 0}$, there is a decomposition of $\ospn_{\p 0}$-modules
\[
\ospn = \ospn_{-1} \oplus \ospn_0 \oplus \ospn_{1}
\]
whose first and third summands are simple. Such a compatible $\Z$-grading is a characteristic property of Type I basic classical Lie superalgebras \cite[\S 2.2]{kac1977}.

Define a bilinear form $(-,-): \Cart^{\vee} \times \Cart^{\vee} \rightarrow \C$ by $(\delta_i,\delta_j) = -(-1)^{\delta_{0i}} \delta_{ij}$. The Cartan matrix $(a_{ij})$ has entries
\[
a_{ij}
=
\begin{cases}
\frac{2 (\alpha_i,\alpha_j)}{(\alpha_i,\alpha_i)} & \mbox{if } i \neq 0, \\
(\alpha_0,\alpha_j) & \mbox{if } i = 0.
\end{cases}
\]
Setting $d_0=-1$, $d_1 = \cdots = d_{n-1}=1$ and $d_n = 2$ gives the symmetrized Cartan matrix
\[
(d_i a_{ij})
=
\left( \begin{smallmatrix}
0 & -1 & & & & \\
-1 & 2 & -1 & & & \\
& \ddots & \ddots & \ddots & &\\
& & -1 & 2 & -1 & \\
& & & -1 & 2 & -2 \\
& & & & -2 & 4
\end{smallmatrix} \right).
\]
Let $\langle-,-\rangle: \Cart^{\vee} \times \Cart^{\vee} \rightarrow \C$ be the symmetric bilinear form defined by $\langle \alpha_i, \alpha_j \rangle = d_ia_{ij}$.

\begin{Lem}
\label{lem:rhoPairings}
The following statements hold.
\begin{enumerate}
\item \label{ite:evRhoSimp} $\langle 2\rho, \alpha_i \rangle = \langle \alpha_i, \alpha_i \rangle$ for all $\alpha_i \in \Pi^+$.

\item \label{ite:oddRhoSimp} $\langle 2\rho_{\p 1},\alpha_i \rangle = -2 n \delta_{i0}$ for all $\alpha_i \in \Pi^+$.

\item \label{ite:oddRhoOdd} $\langle 2\rho_{\p 1},\alpha \rangle = -2 n$ for all $\alpha \in \Delta_{\p 1}^+$.

\item \label{ite:eveRhoOdd} $\langle \rho_{\p 0}, \alpha \rangle \in \Z$ for all $\alpha \in \Delta_{\p 1}^+$.
\end{enumerate} 
\end{Lem}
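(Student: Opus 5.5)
The plan is to make the form $\langle -,-\rangle$ explicit in the basis $\{\delta_0,\dots,\delta_n\}$ of $\Cart^{\vee}$, and then check each item by direct computation. The form is defined only through the symmetrized Cartan matrix on the simple roots $\alpha_0,\dots,\alpha_n$. These simple roots form a basis of $\Cart^{\vee}$, so $\langle -,-\rangle$ is determined by its values on them.

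\textbf{Step 1: identify the form.} I claim that $\langle -,-\rangle = -(-,-)$, that is,
\[
\langle \delta_0,\delta_0\rangle=-1, \qquad \langle\delta_i,\delta_j\rangle=\delta_{ij} \ (1\le i,j\le n), \qquad \langle\delta_0,\delta_i\rangle=0 \ (i \geq 1).
\]
To prove this, it suffices to check that the bilinear form $B$ defined by these values reproduces the matrix $(d_ia_{ij})$ on simple roots. Uniqueness then follows because the simple roots form a basis.

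The check is a short computation:
\begin{itemize}
\item $B(\alpha_0,\alpha_0)=-1+1=0$ and $B(\alpha_0,\alpha_1)=-1$.
\item $B(\alpha_i,\alpha_i)=2$ and $B(\alpha_i,\alpha_{i+1})=-1$ for $1\le i\le n-1$, with the case $i = n-1$ giving $B(\alpha_{n-1},\alpha_n)=-2$.
\item $B(\alpha_n,\alpha_n)=4$.
\item All non-adjacent pairs give $0$.
\end{itemize}
This matching is the only place where any care is needed, mainly the sign conventions, since the form $(-,-)$ in the text carries the opposite sign. I expect it to be the main (mild) obstacle. Once it is settled, the rest is mechanical.

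\textbf{Step 2: verify the four items.} We use $2\rho = -2n\delta_0+\sum_{i=1}^n 2(n-i+1)\delta_i$ and $2\rho_{\p1}=2n\delta_0$.

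For (\ref{ite:evRhoSimp}), we compute:
\begin{itemize}
\item $\langle 2\rho,\alpha_0\rangle = 2n-2n=0$.
\item $\langle 2\rho,\alpha_i\rangle = 2(n-i+1)-2(n-i)=2$ for $1\le i\le n-1$.
\item $\langle 2\rho,\alpha_n\rangle=2\cdot 2\cdot 1=4$.
\end{itemize}
These agree with the diagonal entries $0,2,\dots,2,4$ of the symmetrized Cartan matrix.

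For (\ref{ite:oddRhoSimp}), $\langle 2n\delta_0,\alpha_i\rangle$ only sees the $\delta_0$-coefficient of $\alpha_i$. That coefficient is $1$ for $i=0$ and $0$ otherwise, which gives $-2n\delta_{i0}$.

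For (\ref{ite:oddRhoOdd}), every odd positive root $\delta_0\pm\delta_i$ has $\delta_0$-coefficient $1$. Hence $\langle 2n\delta_0,\delta_0\pm\delta_i\rangle=-2n$.

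For (\ref{ite:eveRhoOdd}), $\rho_{\p0}$ has no $\delta_0$-component. Therefore $\langle\rho_{\p0},\delta_0\pm\delta_i\rangle=\pm(n-i+1)\in\Z$.
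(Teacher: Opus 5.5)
Your proposal is correct. Your Step 1 identification $\langle -,-\rangle = -(-,-)$ is right. Parts (1)--(3) are essentially the same direct computations the paper has in mind. The only small difference there is that the paper deduces (3) from (2), by writing each odd positive root as $\alpha_0$ plus a nonnegative combination of even simple roots, whereas you compute (3) directly in the $\delta$-basis.

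The genuine difference is in (4).
\begin{itemize}
\item The paper expands $\rho_{\p 0}$ in the simple roots. It then argues integrality term by term, using the integrality of the symmetrized Cartan matrix and of the coefficients $\frac{i(2n-i+1)}{2}$; the $\alpha_n$-term needs the extra factor $d_n = 2$.
\item You use Step 1 together with the fact that $\rho_{\p 0}$ has no $\delta_0$-component. This gives the exact value $\langle \rho_{\p 0}, \delta_0 \pm \delta_i \rangle = \pm(n-i+1)$.
\end{itemize}
Your route is shorter, produces explicit values, and avoids the coefficient bookkeeping. That bookkeeping is where the paper slips: the coefficient of $\alpha_n$ in its expansion of $\rho_{\p 0}$ should be $\frac{n(n+1)}{4}$, not $\frac{n(n+1)}{2}$, which is what its closing integrality remark implicitly uses. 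The paper's route has the mild advantage of never needing the form in $\delta$-coordinates.

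A minor improvement to Step 1: you can verify it uniformly instead of entry by entry. Directly from the definitions, $(\alpha_i,\alpha_i) = -2d_i$ for $i \geq 1$ and $d_0 = -1$. Hence $d_i a_{ij} = -(\alpha_i,\alpha_j)$ for all $i,j$. This also covers $n = 1$, where the displayed matrix degenerates and in fact $\langle \alpha_0, \alpha_1 \rangle = -2$ rather than $-1$.
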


\begin{proof}
The first two equalities are direct computations and the second implies the third. For the final statement, we use the equality $\rho_{\p 0} = \sum_{i=1}^{n-1}\frac{i(2n-i+1)}{2}\alpha_i + \frac{n(n+1)}{2}\alpha_n$ to compute
\begin{equation}
\label{eq:eveRhoOdd}
\langle \rho_{\p 0}, \alpha \rangle
=
\sum_{i=1}^{n-1} \frac{i(2n-i+1)}{2} \langle \alpha_i , \alpha \rangle + \frac{n(n+1)}{2} \langle \alpha_n, \alpha \rangle.
\end{equation}
Writing $\alpha = \sum_{j = 0}^n c_j \alpha_j$ with $c_j \in \Z_{\geq 0}$ and $c_0 = 1$ and noting that $\frac{i(2n-i+1)}{2} \in \Z$ shows that each of the first $n-1$ summands on the right-hand side of equation \eqref{eq:eveRhoOdd} is an integer. Finally, $\frac{n(n+1)}{2} \langle \alpha_n, \alpha\rangle = d_n \frac{n(n+1)}{2}\sum_{j = 0}^n c_j a_{nj}$ is an integer since $d_2 \frac{n(n+1)}{4} \in \Z$.
\end{proof}

The fundamental dominant weights $w_0, \dots, w_n \in \Cart^{\vee}$ are defined so that $\langle w_i, \alpha_j \rangle = d_i \delta_{ij}$. The root lattice $\Lambda_R$ (resp. weight lattice $\Lambda_W$) is the additive subgroup of $\Cart^{\vee}$ generated by $\Delta$ (resp. the fundamental dominant weights). Note that $\Lambda_R \subset \Lambda_W$.

\subsection{The quantum group}
\label{sec:quantGroup}

We define the quantum group of $\ospn$ following \cite[\S 2.1]{zhang1993} and \cite[\S 2.1]{yamane1994}.

Let $r \geq 3$ be an odd integer which does not divide $n$. Set $q = e^{\frac{2 \pi \I}{r}}$. For $z \in \C$, define
\[
q^z = e^{\frac{2 z \pi \I}{r}},
\qquad
\{z \}_q = q^z - q^{-z},
\qquad
\left[ z \right]_q = \frac{\{z\}_q}{\{1\}_q}.
\]
Note that $\{z\}_q =0$ if and only if $z \in \frac{r}{2} \Z$. For integers $0 \leq k \leq m$, set
\[
\left[m \right]_{q}! = \prod_{j=1}^m \left[j\right]_{q},
\qquad
\qbin{m}{k}_{q} = \frac{\left[m \right]_{q}!}{\left[ k \right]_{q}! \left[m-k\right]_{q}!}.
\]
Set $q_i = q^{d_i}$ for $0 \leq i \leq n$.

\begin{Def}
\label{def:quantGroup}
Let $U_q(\ospn)$ be the unital superalgebra with generators $K_i^{\pm 1}$, $E_i$, $F_i$, $0 \leq i \leq n$, all even except for the odd generators $E_0$ and $F_0$, and relations
\[
[K_i,K_j]=0,
\qquad
K_i K_i^{-1} = K_i^{-1} K_i = 1,
\]
\[
K_i E_j = q_i^{a_{ij}} E_j K_i,
\qquad
K_i F_j = q_i^{-a_{ij}} F_j K_i,
\qquad
[E_i,F_j]= \delta_{ij} \frac{K_i -K_i^{-1}}{q_i-q_i^{-1}},
\]
\[
\sum_{t=0}^{1-a_{ij}} (-1)^t \qbin{1 - a_{ij}}{t}_{q_i} E_i^{1-a_{ij}-t} E_j E_i^t =0, \qquad 0 \neq i \neq j,
\]
\[
\sum_{t=0}^{1-a_{ij}} (-1)^t \qbin{1 - a_{ij}}{t}_{q_i} F_i^{1-a_{ij}-t} F_j F_i^t =0, \qquad 0 \neq i \neq j,
\]
\[
E_0^2=0,
\qquad
F_0^2=0.
\]
\end{Def}

The superalgebra $U_q(\ospn)$ has a Hopf structure with coproduct
\[
\Delta(K_i^{ \pm 1}) = K_i^{\pm 1} \otimes K_i^{\pm 1},
\qquad
\Delta(E_i) = E_i \otimes 1 + K_i \otimes E_i,
\qquad
\Delta(F_i) = F_i \otimes K_i^{-1} + 1 \otimes F_i,
\]
counit
\[
\epsilon(K_i^{\pm 1}) = 1,
\qquad
\epsilon(E_i) = 0, \qquad \epsilon(F_i) = 0,
\]
and antipode
\[
\qquad S(K_i^{\pm 1}) = K_i^{\mp 1},
\qquad
S(E_i) = -K_i^{-1}E_i,
\qquad S(F_i) = -F_iK_i.
\]
The even elements $K_i^{\pm 1}, E_i,F_i$, $1 \leq i \leq n$, generate a Hopf subalgebra isomorphic to $U_q(\mathfrak{sp}(2n))$; including the elements $K_0^{\pm 1}$ generates a copy of $U_q(\ospn_{\p{0}})$.

\begin{Rem}
\label{rem:compareHopfStr}
The Hopf structure of $U_q(\ospn)$ used in this paper agrees with \cite{zhang1993,yamane1994} but differs from \cite{khoroshkin1991,anghel2021}. 
\end{Rem}

Let $\omega$ be the $\C$-antilinear antiautomorphism of $U_q(\ospn)$ defined on generators by
\[
\omega(K_i^{\pm 1})=K^{\mp 1}_i,
\qquad
\omega(E_i)=F_i,
\qquad
\omega(F_i)=E_i.
\]

We recall a construction of root vectors for $U_q(\ospn)$. Let $\alpha \in \Delta^+$, written as $\sum_{i=0}^n c_i\alpha_i$ with $c_i \in \Z_{\geq 0}$. Define positive integers
\[
\height(\alpha) = \sum_{i=0}^n c_i,
\qquad
g(\alpha) = \min\{i \mid c_i \neq 0\}.
\]
Set $\height^{\prime}(\alpha) = \frac{\height(\alpha)}{c_{g(\alpha)}}$. Following \cite[\S 3.2]{yamane1994}, define a total order $<$ on  $\Delta^+$ by setting $\alpha < \beta$ if $g(\alpha) < g(\beta)$ or $g(\alpha) = g(\beta)$ and $\height^{\prime}(\alpha) < \height^{\prime}(\beta)$.

\begin{Def}[{\cite[\S 5.1.1]{yamane1994}}]
\label{def:rootVect}
For each $\alpha \in \Delta^+$, define $E_{\alpha} \in U_q(\ospn)$ as follows. Define $E_{\alpha_i} = E_i$ for $0 \leq i \leq n$. If $g(\alpha) < i$ for some $0 \leq i \leq n$ and $\alpha + \alpha_i \in \Delta^+$, then define
 \[
E_{\alpha + \alpha_i} = E_{\alpha}E_{\alpha_i} - q^{-\langle \alpha,\alpha_i \rangle}E_{\alpha_i}E_{\alpha}.
\]
If $g(\alpha) = g(\beta)$, $\height(\beta) - \height(\alpha) \leq 1$ and $\alpha+ \beta \in \Delta^+$, then define 
\[
E_{\alpha + \beta} = \frac{1}{[2]_q} \left( E_{\alpha}E_{\beta} - q^{-\langle \alpha,\beta \rangle} E_{\beta} E_{\alpha} \right).
\]
The negative root vector $F_{\alpha}$ is defined to be $\omega(E_{\alpha})$.
\end{Def}

We note that the odd root vectors $E_{\alpha}$, $F_{\alpha}$, $\alpha \in \Delta^+_{\p 1}$, differ from those of \cite[\S 2.2]{zhang1993} by a non-zero scalar.

\subsection{Unrolled quantum groups}
\label{sec:unrolledQuantGroup}

We keep the notation of Section \ref{sec:quantGroup}.

\begin{Def}
The \emph{unrolled quantum group} $\qhospn$ is the unital superalgebra with generators $H_i$, $K_i^{\pm 1}$, $E_i$, $F_i$, $0 \leq i \leq n$, all even except for the odd generators $E_0$ and $F_0$, and the relations of $U_q(\ospn)$ together with
\[
[H_i,H_j] =0, 
\qquad
[H_i,K_j^{\pm 1}]=0,
\qquad
[H_i,E_j] = a_{ij} E_j,
\qquad
[H_i,F_j] = -a_{ij} F_j.
\]
\end{Def}

Extend the Hopf structure of $U_q(\ospn)$ to $\qhospn$ by setting
\[
\Delta(H_i) = H_i \otimes 1 + 1 \otimes H_i, \qquad S(H_i) = -H_i, \qquad \epsilon(H_i) = 0.
\]

\begin{Lem}
\label{lem:KPowerCentral}
For each $0 \leq i \leq n$, the element $K_i^r$ is central in $\qhospnres$. 
\end{Lem}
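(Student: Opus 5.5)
To prove that $K_i^r$ is central, it suffices to check that it commutes with each generator of the algebra. Note that $\qhospnres$ is the quotient of $\qhospn$ by the $r$\textsuperscript{th} powers of the even root vectors. If $K_i^r$ commutes with every generator of $\qhospn$, its image in the quotient is central too. The relevant generators are $H_j$, $K_j^{\pm 1}$, $E_j$ and $F_j$ for $0 \leq j \leq n$. Since $K_i^r$ is even, the super commutator with any element is the ordinary commutator, so there are no sign issues to track.

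The $K_j^{\pm 1}$ and $H_j$ are immediate: the relations $[K_i,K_j]=0$ and $[H_j,K_i^{\pm 1}]=0$ give $[K_j^{\pm1},K_i^r]=0$ and $[H_j,K_i^r]=0$.

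For the root generators, I iterate the relation $K_i E_j = q_i^{a_{ij}} E_j K_i$ to obtain $K_i^r E_j = q_i^{r a_{ij}} E_j K_i^r = q^{r d_i a_{ij}} E_j K_i^r$. The same argument gives $K_i^r F_j = q^{-r d_i a_{ij}} F_j K_i^r$. Since $q^r = e^{2\pi\I}=1$, it remains only to check that $d_i a_{ij}$ is an integer, so that $q^{r d_i a_{ij}} = (q^r)^{d_i a_{ij}} = 1$. This holds because the symmetrized Cartan matrix displayed in Section~\ref{sec:ospn} has integer entries. One should be careful here: for a non-integer $z$, the definition $q^z = e^{2z\pi\I/r}$ makes the identity $q^{rz} = 1$ fail. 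So the only substantive point is the integrality of the exponents $d_i a_{ij}$, including the odd row $i=0$, where $d_0 a_{0j} = (\alpha_0,\alpha_j) \in \{0,-1\}$.

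With all generators checked, $K_i^r$ is central in $\qhospn$, and hence in $\qhospnres$. There is no real obstacle: the main step is the integrality check on $d_i a_{ij}$, which is read off the displayed matrix.
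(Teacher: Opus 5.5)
Your proposal is correct and matches the paper's proof: check commutation with each generator, where the only content is $q_i^{ra_{ij}} = q^{rd_ia_{ij}} = 1$, and you justify this by the integrality of the symmetrized Cartan matrix, a point the paper leaves implicit. One trivial slip: since $d_0=-1$ and $a_{0j}=(\alpha_0,\alpha_j)$, one has $d_0a_{0j} = -(\alpha_0,\alpha_j)$, not $(\alpha_0,\alpha_j)$; your stated values $\{0,-1\}$ are nonetheless correct, and integrality is unaffected.
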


\begin{proof}
It suffices to prove that $K_i^r$ commutes with the generators. The defining relations give $K^r_i E_j = q_i^{r a_{ij}} E_j K^r_i$ with $q_i^{ra_{ij}} = q^{rd_ia_{ij}} = 1$. Similarly, $K_i^r$ commutes with $F_j$. It is immediate that $K_i^r$ commutes with $K^{\pm 1}_j$ and $H_j$. 
\end{proof}

Consider the two-sided ideal $I = \langle E_{\alpha}^r, F_{\alpha}^r \mid \alpha \in \Delta_{\p 0}^+\rangle$ of $\qhospn$. Since $r$ is odd and $E_{\alpha}, F_{\alpha} \in U_q^H(\ospn_{\p 0})$, the results of \cite[\S 5.6]{deconcini1992} ensure that $I$ is a Hopf ideal.

\begin{Def}
    The \emph{restricted unrolled quantum group} is the Hopf superalgebra
    \[
     \qhospnres = \qhospn \slash I.
    \]
\end{Def}

Set $m_{\p 0} = r-1$ and $m_{\p 1} = 1$. For $\p p \in \Ztwo$, define even elements of $\qhospnres$ by
\[
\Gamma^+_{\p p} = \prod_{\alpha \in \Delta^+_{\p p}} E^{m_{\p p}}_{\alpha},
\qquad
\Gamma^-_{\p p} = \prod_{\alpha \in \Delta^+_{\p p}} F^{m_{\p p}}_{\alpha},
\]
where the products are taken in the decreasing order with respect to $<$.

\begin{Lem}[{\cite[Lem. 6]{zhang1993}}]
\label{lem:gammaCentral}
The elements $\Gamma^{\pm}_{\p 1}$ commute with the subalgebra $\overline{U}_q(\mathfrak{sp}(2n))$. 
\end{Lem}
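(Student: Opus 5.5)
The plan is to reduce to generators. The subalgebra $\overline{U}_q(\mathfrak{sp}(2n))$ is generated by $K_i^{\pm 1}, E_i, F_i$ for $1 \leq i \leq n$, so it suffices to show that $\Gamma^+_{\p 1}$ commutes with each of these; the statement for $\Gamma^-_{\p 1}$ then follows by applying the antiautomorphism $\omega$, since $\omega$ preserves the generating set of $\overline{U}_q(\mathfrak{sp}(2n))$ and, by Definition~\ref{def:rootVect}, sends $\Gamma^+_{\p 1}$ to $\Gamma^-_{\p 1}$ (up to reversing the order of the factors, which I will handle by showing the product is, up to a nonzero scalar, independent of ordering).

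\textbf{Commutation with $K_i$.} The element $\Gamma^+_{\p 1}$ has weight $2\rho_{\p 1} = \sum_{\alpha \in \Delta^+_{\p 1}} \alpha = 2n\delta_0$. Hence
\[
K_i \Gamma^+_{\p 1} K_i^{-1} = q^{\langle \alpha_i, 2\rho_{\p 1}\rangle}\Gamma^+_{\p 1} .
\]
By Lemma~\ref{lem:rhoPairings}\eqref{ite:oddRhoSimp}, $\langle \alpha_i, 2\rho_{\p 1}\rangle = 0$ for $i \neq 0$, so the scalar is $1$.

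\textbf{Commutation with $E_i$ and $F_i$.} Here I use the structure of $\ospn_{1}$ as the $2n$-dimensional vector representation of $\mathfrak{sp}(2n)$. The span $V^+ = \Span\{E_\alpha \mid \alpha \in \Delta^+_{\p 1}\}$ should be stable under the adjoint action of $U_q(\mathfrak{sp}(2n))$. This can be checked from the recursive definitions, using the quantum Serre relations and $E_0^2 = 0$. It then makes $V^+$ into the quantum vector representation.

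Moreover, the odd root vectors should $q$-anticommute: $E_\alpha E_\beta = -q^{c_{\alpha\beta}} E_\beta E_\alpha$ for $\alpha \neq \beta$, and $E_\alpha^2 = 0$. Granting this, the subalgebra generated by $V^+$ is a quantum exterior algebra, and $\Gamma^+_{\p 1}$ spans its top degree $\Lambda_q^{2n} V^+$. That top degree is one-dimensional and is an $\mathrm{ad}$-stable line of weight zero for $\mathfrak{sp}(2n)$.

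Such a line carries a one-dimensional representation of weight zero. Consequently
\[
\mathrm{ad}(E_i)(\Gamma^+_{\p 1}) = 0, \qquad \mathrm{ad}(F_i)(\Gamma^+_{\p 1}) = 0 .
\]
Since $\Gamma^+_{\p 1}$ is even and $K_i$-invariant, vanishing of the adjoint action translates directly into $E_i \Gamma^+_{\p 1} = \Gamma^+_{\p 1} E_i$. For example, with $\mathrm{ad}(E_i)(x) = E_i x - K_i x K_i^{-1} E_i$, this gives $E_i x = x E_i$ as soon as $K_i x K_i^{-1} = x$. The same holds for $F_i$.

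As a fallback to the exterior-algebra argument, one can show $\mathrm{ad}(E_i)$ annihilates $\Gamma^+_{\p 1}$ directly. Apply the twisted Leibniz rule: $\mathrm{ad}(E_i)$ sends each factor $E_\alpha$ to a multiple of $E_{\alpha+\alpha_i}$ (or to zero). The resulting product then contains a repeated odd factor, which vanishes by nilpotency after $q$-commuting it into place.

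\textbf{Main obstacle.} The hard step is establishing the explicit relations for the odd root vectors. Specifically, one needs:
\begin{itemize}
\item the $q$-anticommutation relations among the $E_\alpha$, $\alpha \in \Delta^+_{\p 1}$, together with $E_\alpha^2 = 0$;
\item the precise formulas for $\mathrm{ad}(E_i)E_\alpha$.
\end{itemize}
Both must be derived from the recursive definition in Definition~\ref{def:rootVect}, following Yamane's convex-ordering (PBW) arguments and the results of \cite{zhang1993}. Once these are in hand, the argument above is formal.
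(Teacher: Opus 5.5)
There is a genuine error in the step you propose to grant. The overall architecture is fine, and since the paper gives no argument of its own (it only cites \cite[Lem. 6]{zhang1993}), it is a reasonable self-contained route. The pieces that work are these: reduce to generators; handle $K_i$ via Lemma \ref{lem:rhoPairings}\eqref{ite:oddRhoSimp}; convert $\operatorname{ad}(E_i)\Gamma^+_{\p 1}=0$ and $\operatorname{ad}(F_i)\Gamma^+_{\p 1}=0$ into commutation; and kill these because they are degree-$2n$ elements of weight $2\rho_{\p 1}\pm\alpha_i$ in the subalgebra $A$ generated by $V^+$. Here degree is the $\delta_0$-coefficient of the weight. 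For $\Gamma^-_{\p 1}$ you do not need order-independence: $\omega(\Gamma^+_{\p 1})$ is non-zero because $\omega$ is bijective, and it lies in the one-dimensional top degree of $\omega(A)$, which contains $\Gamma^-_{\p 1}$.

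\textbf{The false step.} You assume that all distinct odd root vectors $q$-anticommute, and plan to extract this from Yamane and Zhang. This is false for $n\geq 2$. Take $n=2$ and, with your $\operatorname{ad}(E_i)$, put $x_1=E_0$, $x_2=\operatorname{ad}(E_1)x_1$, $x_3=\operatorname{ad}(E_2)x_2$, $x_4=\operatorname{ad}(E_1)x_3$. These are non-zero multiples of $E_{\delta_0-\delta_1}, E_{\delta_0-\delta_2}, E_{\delta_0+\delta_2}, E_{\delta_0+\delta_1}$.
\begin{itemize}
\item Applying $\operatorname{ad}(E_1)$ to $x_1^2=0$ gives $x_2x_1+q^{-1}x_1x_2=0$.
\item Applying $\operatorname{ad}(E_1)$ to this, and then $\operatorname{ad}(E_2)$, gives $x_2^2=0$ (using $q+q^{-1}\neq 0$) and then $x_3x_2+q^{-2}x_2x_3=0$.
\item Applying instead $\operatorname{ad}(E_2)$ and then $\operatorname{ad}(E_1)$ to $x_2x_1+q^{-1}x_1x_2=0$, and using the previous relation, gives
\[
x_4x_1+q^{-2}x_1x_4=-q^{-1}(1-q^{-2})\,x_2x_3 .
\]
\end{itemize}
Since $x_1x_4$ and $x_2x_3$ are independent PBW monomials and $q^2\neq 1$, no relation $x_4x_1=c\,x_1x_4$ can hold. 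This is the usual correction term in quantum symplectic exterior algebras.

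The same example breaks the mechanism of your fallback. We have $x_1x_4x_1=-q^{-1}(1-q^{-2})\,x_1x_2x_3\neq 0$, so a word with a repeated odd factor need not vanish after reordering.

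\textbf{The repair.} Import the relations in straightening form instead.
\begin{itemize}
\item $E_\alpha^2=0$ for $\alpha\in\Delta^+_{\p 1}$.
\item For odd $\alpha<\beta$, $E_\beta E_\alpha$ equals a multiple of $E_\alpha E_\beta$ plus ordered products of root vectors strictly between $\alpha$ and $\beta$. These are again odd, because the odd roots are exactly those with $g(\alpha)=0$ and so form an initial segment of $<$.
\end{itemize}
Together with PBW independence, this shows that the ordered square-free monomials form a basis of $A$. Hence the degree-$2n$ part of $A$ is $\C\,\Gamma^+_{\p 1}$, concentrated in weight $2\rho_{\p 1}$.

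With this replacement your weight argument completes the proof. So does your fallback, once ``vanishes by nilpotency'' is replaced by ``is a degree-$2n$ element of weight $2\rho_{\p 1}\pm\alpha_i\neq 2\rho_{\p 1}$, hence zero''.
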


\section{The relative modular category of weight modules over $\qhospnres$}
\label{sec:repOSP}

\subsection{Basic definitions}

Let $V$ be a finite-dimensional $\qhospnres$-module. A vector $v \in V$ has \emph{weight} $\lambda \in \Cart^{\vee}$ if $H_i v = \lambda(H_i) v$ for all $0 \leq i \leq n$. We call $V$ a \emph{weight module} if each generator $H_i$ acts semisimply on $V$ and $K_i v = q_i^{\lambda(H_i)} v$ whenever $v \in V$ has weight $\lambda$. Note that $q_i^{\lambda(H_i)} = q^{\langle \lambda, \alpha_i \rangle}$. Let $\cat$ be the category of weight $\qhospnres$-modules and their parity-preserving $\qhospnres$-linear maps. The bialgebra structure of $\qhospnres$ gives $\cat$ a monoidal structure.

\begin{Ex}
\label{ex:oneDimModules}
Let $V \in \cat$ be one-dimensional and $v \in V$ non-zero of weight $\lambda$. Weight considerations imply that $E_i v = F_i v =0 $, and hence $\frac{K_i - K_i^{-1}}{q_i-q_i^{-1}}v=0$ for all $0 \leq i \leq n$ from which we deduce
$2 \langle \lambda, \alpha_i \rangle \in r \Z$. Define
\[
\Lambda^{(1)} = \{\lambda \in \Cart^{\vee} \mid 2\langle \lambda, \alpha_i \rangle \in r\Z \text{ for all } 0 \leq i \leq n \}.
\]
The assignment of $(\lambda, \p{p}) \in \Lambda^{(1)} \times \Ztwo$ to the one-dimensional module $\sigma(\lambda, \p{p})$ of weight $\lambda$ in degree $\p{p}$ parametrizes,isomorphism classes of one-dimensional weight modules.
\end{Ex}

\begin{Lem}
\label{lem:alternativeZ}
    There is an equality $\Lambda^{(1)}  \cap \Lambda_R = r\Lambda_{W} \cap \Lambda_R$.
\end{Lem}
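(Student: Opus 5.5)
The plan is to describe both lattices by congruence conditions on the pairings $\langle \lambda, \alpha_i \rangle$, $0 \leq i \leq n$, and compare these conditions index by index.

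First I will describe $r\Lambda_W$ in these terms. The symmetrized Cartan matrix $(d_i a_{ij})$ is nondegenerate, so $\langle -,- \rangle$ is nondegenerate on $\Cart^{\vee}$. Hence the $w_i$ are well defined and form a basis of $\Cart^{\vee}$. For any $\lambda \in \Cart^{\vee}$ we then have
\[
\lambda = \sum_{i=0}^n \frac{\langle \lambda, \alpha_i \rangle}{d_i} w_i .
\]
It follows that $\lambda \in r\Lambda_W$ if and only if $\langle \lambda, \alpha_i \rangle \in r d_i \Z$ for all $0 \leq i \leq n$. Since $d_i = \pm 1$ for $i<n$ and $d_n = 2$, this reads $\langle \lambda,\alpha_i\rangle \in r\Z$ for $0 \leq i \leq n-1$ and $\langle \lambda,\alpha_n\rangle \in 2r\Z$. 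By Example \ref{ex:oneDimModules}, the condition for $\Lambda^{(1)}$ is $2\langle \lambda,\alpha_i\rangle \in r\Z$ for all $i$.

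The inclusion $r\Lambda_W \cap \Lambda_R \subset \Lambda^{(1)} \cap \Lambda_R$ is then immediate, because $\langle \lambda,\alpha_i\rangle \in r d_i\Z$ implies $2\langle \lambda,\alpha_i\rangle \in r\Z$.

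For the reverse inclusion, let $\lambda = \sum_j c_j \alpha_j \in \Lambda_R \cap \Lambda^{(1)}$ with $c_j \in \Z$.
\begin{enumerate}
\item Since the entries $d_j a_{ji}$ are integers, $\langle \lambda, \alpha_i \rangle = \sum_j c_j d_j a_{ji} \in \Z$.
\item Combined with $2\langle \lambda,\alpha_i\rangle \in r\Z$ and $r$ odd, this gives $\langle \lambda,\alpha_i\rangle \in r\Z$ for every $i$. This settles all indices $i \leq n-1$.
\item For $i = n$, the last column of the symmetrized Cartan matrix has entries in $\{0,-2,4\}$, all even. Hence $\langle \lambda,\alpha_n\rangle \in 2\Z \cap r\Z = 2r\Z$, again using that $r$ is odd.
\end{enumerate}
This shows $\lambda \in r\Lambda_W$.

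The only point needing care is the index $n$, where $d_n = 2$: it is precisely the evenness of the $\alpha_n$-column together with the oddness of $r$ that makes the argument work. Beyond that, I only need to double-check the nondegeneracy claim so that $\Lambda_W$ is well defined; this follows from computing the determinant of the displayed symmetrized Cartan matrix.
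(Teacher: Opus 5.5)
Your proof is correct and follows essentially the same route as the paper. Both arguments characterize $r\Lambda_W\cap\Lambda_R$ by the conditions $\langle\lambda,\alpha_i\rangle\in rd_i\Z$, use integrality of $\langle\lambda,\alpha_i\rangle$ on $\Lambda_R$ and oddness of $r$ to replace $2\langle\lambda,\alpha_i\rangle\in r\Z$ by $\langle\lambda,\alpha_i\rangle\in r\Z$, and settle the index $n$ (where $d_n=2$) via the evenness of $\langle\lambda,\alpha_n\rangle=-2\lambda_{n-1}+4\lambda_n$, which is your even-column observation.
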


\begin{proof}
Let $\lambda \in \Lambda_R$. Since $\langle \lambda, \alpha_i\rangle \in \Z$ for all $0 \leq i \leq n$ and $r$ is odd, we have $2 \langle \lambda, \alpha_i \rangle \in r \Z$ if and only if $\langle \lambda, \alpha_i \rangle \in r \Z$, whence
\begin{equation}
\label{eq:freeRealWeights}
\Lambda^{(1)}  \cap \Lambda_R = \{ \lambda \in \Lambda_R \mid \langle \lambda, \alpha_i \rangle \in r \Z \mbox{ for all } 0 \leq i \leq n\}.
\end{equation}
On the other hand, the definition of fundamental dominant weights gives
\[
r \Lambda_W \cap \Lambda_R
=
\{ \lambda \in \Lambda_R \mid \langle \lambda, \alpha_i \rangle \in r d_i \Z \mbox{ for all } 0 \leq i \leq n \}.
\]
Recall that $d_i = \pm 1$ unless $i=n$, in which case $d_n = 2$. Writing $\lambda \in \Lambda_R$ as $\lambda = \sum_{i=0}^n \lambda_i \alpha_i$ with $\lambda_i \in \Z$, we have $\langle \lambda, \alpha_n \rangle = -2 \lambda_{n-1} + 4 \lambda_n$.
Since $r$ is odd, $\langle \lambda, \alpha_n \rangle \in r \Z$ if and only if
$2 \langle \lambda, \alpha_n \rangle \in 2 r \Z$, whence $r \Lambda_W \cap \Lambda_R$ is given by the right-hand side of equation \eqref{eq:freeRealWeights}.
\end{proof}

\begin{Def}
\label{def:freeRealWeights}
Let $\Lambda_{\FR}$ be the additive group which appears on either side of the equality in Lemma \ref{lem:alternativeZ}.
\end{Def}

Write $\mu \in \Lambda_R$ as $\mu = \sum_{i = 0}^n \mu_i \alpha_i \in \Lambda_R$ with $\mu_i \in \Z$. Define $K_{\mu} = \prod_{i = 0}^nK_i^{\mu_i}$. Note that $K_{\alpha_i} = K_i$ and if $V \in \cat$ with $v \in V$ of weight $\lambda \in \Cart^{\vee}$, then $K_{\alpha} v = q^{\langle \lambda, \alpha \rangle}v$.

Define
\begin{equation}
\label{eq:pivotElement}
\pi = 2r\rho_{\p 0} - 2\rho \in \Lambda_R.
\end{equation}

\begin{Lem}
\label{lem:pivotElement}
The element $K_{\pi}$ is a pivot of the Hopf superalgebra $\qhospnres$.
\end{Lem}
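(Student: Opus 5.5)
The plan is to verify the two defining properties of a pivot directly: $K_{\pi}$ is group-like, and $S^2(x) = K_{\pi} x K_{\pi}^{-1}$ for all $x \in \qhospnres$.

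First, $K_{\pi}$ must be well defined. This requires $\pi \in \Lambda_R$. The proof of Lemma \ref{lem:rhoPairings} writes $\rho_{\p 0}$ as an integral combination of the $\alpha_i$, since $\frac{i(2n-i+1)}{2}, \frac{n(n+1)}{2} \in \Z$. Also $2\rho = \sum_{\alpha \in \Delta^+} \epsilon(\alpha)\alpha \in \Lambda_R$. Hence $\pi = 2r\rho_{\p 0} - 2\rho \in \Lambda_R$, and $K_{\pi} = \prod_i K_i^{\pi_i}$ makes sense. Since each $K_i^{\pm 1}$ is group-like, so is the product $K_{\pi}$; in particular $K_\pi$ is even and invertible.

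For the conjugation identity, note that $S$ is a super anti-automorphism, so $S^2$ is an even algebra automorphism. Conjugation by $K_{\pi}$ is also an even algebra automorphism. It therefore suffices to compare the two on the generators $H_i, K_i^{\pm 1}, E_i, F_i$ of $\qhospn$. Both maps then descend to the quotient $\qhospnres$: $I$ is a Hopf ideal, so $S(I) \subset I$, and conjugation obviously preserves $I$.

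On $H_i$ and $K_i^{\pm 1}$ both maps act trivially, since $S^2(H_i) = H_i$, $S^2(K_i) = K_i$ and the Cartan part is commutative. For $E_i$, the sign in $S(ab) = (-1)^{\p a \p b} S(b)S(a)$ is trivial because $K_i$ is even. This gives
\[
S^2(E_i) = -S(E_i)S(K_i^{-1}) = K_i^{-1}E_iK_i = q^{-\langle \alpha_i,\alpha_i\rangle}E_i .
\]
Similarly $S^2(F_i) = K_i F_i K_i^{-1} = q^{\langle\alpha_i,\alpha_i\rangle}F_i$. On the other side, $K_{\pi} E_i K_{\pi}^{-1} = q^{\langle \pi,\alpha_i\rangle}E_i$ and $K_\pi F_i K_\pi^{-1} = q^{-\langle\pi,\alpha_i\rangle}F_i$.

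So everything reduces to showing $q^{\langle \pi,\alpha_i\rangle} = q^{-\langle\alpha_i,\alpha_i\rangle}$ for $0 \le i \le n$. By Lemma \ref{lem:rhoPairings}\eqref{ite:evRhoSimp},
\[
\langle \pi, \alpha_i\rangle = 2r\langle\rho_{\p 0},\alpha_i\rangle - \langle\alpha_i,\alpha_i\rangle .
\]
It then suffices that $\langle \rho_{\p 0},\alpha_i\rangle \in \tfrac12\Z$, since then $q^{2r\langle\rho_{\p 0},\alpha_i\rangle} = 1$. For $i = 0$ this is Lemma \ref{lem:rhoPairings}\eqref{ite:eveRhoOdd}. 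For $i \geq 1$, integrality follows because $\rho_{\p 0}$ is an integral combination of the $\alpha_j$ and the symmetrized Cartan matrix has integer entries.

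The main point needing care is this last bookkeeping step, namely checking that $2r\langle \rho_{\p 0}, \alpha_i\rangle \in r\Z$ for every simple root, including the odd one. The factor $2r$ in $\pi$ is designed exactly to absorb this. The remaining steps, including the super sign conventions in $S^2$, are routine.
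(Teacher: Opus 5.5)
Your argument is the paper's: the paper writes $K_{\pi}=K_{2r\rho_{\p 0}}K_{-2\rho}$, discards $K_{2r\rho_{\p 0}}$ as central by Lemma \ref{lem:KPowerCentral}, and applies Lemma \ref{lem:rhoPairings}\eqref{ite:evRhoSimp} to $E_i$ and $F_i$. You instead show $q^{2r\langle\rho_{\p 0},\alpha_i\rangle}=1$ directly, which rests on the same integrality that makes $K_i^r$ central. You also add the routine checks that the paper leaves implicit: group-likeness, the generators $H_i,K_i^{\pm1}$, and descent to the quotient.

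One justification is wrong, although the conclusion survives. $\rho_{\p 0}$ is not in general an integral combination of the simple roots. The coefficient of $\alpha_n$ is $\frac{n(n+1)}{4}$, not $\frac{n(n+1)}{2}$; the latter is a typo in the proof of Lemma \ref{lem:rhoPairings}. For instance, when $n=2$ one has $\rho_{\p 0}=2\delta_1+\delta_2=2\alpha_1+\tfrac32\alpha_2\notin\Lambda_R$.

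The fix is to use $2\rho_{\p 0}=\sum_{\alpha\in\Delta^+_{\p 0}}\alpha\in\Lambda_R$ instead. This gives $\pi=r\cdot 2\rho_{\p 0}-2\rho\in\Lambda_R$. Since the symmetrized Cartan matrix is integral, it also gives $\langle 2\rho_{\p 0},\alpha_i\rangle\in\Z$ for all $0\le i\le n$. That is precisely the condition $\langle\rho_{\p 0},\alpha_i\rangle\in\tfrac12\Z$ that you correctly identified as sufficient. A direct computation gives $\langle\rho_{\p 0},\alpha_0\rangle=-n$ and $\langle\rho_{\p 0},\alpha_i\rangle=d_i$ for $i\ge1$, so integrality does hold, just not for the reason you gave.

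Also, $S^2(F_i)=K_i^{-1}F_iK_i$ rather than $K_iF_iK_i^{-1}$. Your stated value $q^{\langle\alpha_i,\alpha_i\rangle}F_i$ is nonetheless correct.
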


\begin{proof} 
Note that $K_{\pi} = K_{2 r \rho_{\p 0}} K_{-2 \rho}$ with $K_{2 r \rho_{\p 0}}$ central (see Lemma \ref{lem:KPowerCentral}). We compute
\[
K_{\pi}E_iK_{\pi}^{-1} = K_{-2\rho}E_iK_{-2\rho}^{-1} = q^{-\langle 2\rho, \alpha_i \rangle}E_i = q^{-\langle \alpha_i, \alpha_i \rangle}E_i = S^2(E_i),
\]
the third equality following from Lemma \ref{lem:rhoPairings}\eqref{ite:evRhoSimp}. Similarly, we find $K_{\pi}F_iK_{\pi}^{-1} = S^2(F_i)$.
\end{proof}

Given $V \in \cat$, let $V^{\vee} \in \cat$ be the $\C$-linear dual of the underlying super vector space of $V$ with $\qhospnres$-module structure given by
\begin{equation*}
(x \cdot f)(v) = (-1)^{\p f \p x}f(S(x)v), \qquad v \in V, \, f \in V^{\vee}, \, x \in \qhospnres.
\end{equation*}
Let $\{v_i\}_i$ be a homogeneous basis of $V$ with dual basis $\{v_i^{\vee}\}_i$. Direct computations show that the maps
\begin{equation*}
\begin{split}
\tev_V (f \otimes v) = f(v), \qquad & \ev_V(v \otimes f) = (-1)^{\p f \p v}f(K_{\pi}v),\\
\tcoev_V(1) = \sum_i v_i \otimes v_i^{\vee} ,\qquad & \coev_V(1) = \sum_i (-1)^{\p v_i}v_i^{\vee} \otimes K_{\pi}^{-1} v_i,
\end{split}
\end{equation*}
define a pivotal structure on $\cat$ with pivotal isomorphism induced by the element $K_{\pi}$ of Lemma \ref{lem:pivotElement}.

\begin{Rem}
\label{rem:differencePivotal}
The element $\pi$ defined in equation \eqref{eq:pivotElement} should be seen as the $\ospn$-analogue of that defined in \cite[\S 3.3]{anghel2021} for $\mathfrak{sl}(m \vert n)$. The differing sign in the definitions for $\ospn$ and $\mathfrak{sl}(m \vert n)$ reflects the differing Hopf structures; see Remark \ref{rem:compareHopfStr}.
\end{Rem}

Let $\overline{U}_q^{H}(\mathfrak{p})$ be the subalgebra of $\qhospnres$ generated by $H_i$, $K_i^{\pm 1}$, $E_i$, $0 \leq i \leq n$, and $F_i$, $1 \leq i \leq n$. We view a weight $\overline{U}_q^H(\ospn_{\p 0})$-module as a $\overline{U}_q^{H}(\mathfrak{p})$-module by placing it in degree $\p 0$ and letting $E_0$ act by zero.

\begin{Def}
\label{def:kacModule}
The \emph{Kac module} of a weight $\overline{U}_q^H(\ospn_{\p 0})$-module $W$ is
\[
K(W) = \qhospnres \otimes_{\overline{U}_q^{H}(\mathfrak{p})} W.
\]
\end{Def}

Since $\vert \Delta_{\p 1}^- \vert = 2n$, the Poincar\'{e}--Birkhoff--Witt Theorem gives $\dim_{\C} K(W) = 2^{2n} \cdot \dim_{\C} W$. When $W = S_0(\lambda)$ is simple of highest weight $\lambda \in \Cart^{\vee}$, we write $K(\lambda)$ for $K(S_0(\lambda))$. The module $S_0(\lambda)$ is a quotient of the Verma module $V_0(\lambda)$ of highest weight $\lambda$. Moreover, $V_0(\lambda)$ is itself simple for generic $\lambda$, in which case $S_0(\lambda) = V_0(\lambda)$; see \cite[Thm. 3.2]{deconcini1990}, \cite[Prop. 34]{geer2013b} for a discussion of genericity.

\begin{Lem}[{\emph{Cf}. \cite[Lem. 3.7]{dejeugt1990}}]
\label{lem:submoduleOfKac}
Any non-zero submodule of $K(\lambda)$ contains the $\overline{U}_q(\ospn_{\p 0})$-module $\Gamma^-_{\p{1}} \cdot S_0(\lambda)$.
\end{Lem}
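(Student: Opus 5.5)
Let $M \subset K(\lambda)$ be a non-zero submodule. I will show that $M$ contains a non-zero vector of $\Gamma^-_{\p 1} \cdot S_0(\lambda)$, and then conclude using simplicity.

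\textbf{Structure of $K(\lambda)$.} By the Poincar\'e--Birkhoff--Witt theorem and the $\Z$-grading $\ospn = \ospn_{-1}\oplus\ospn_0\oplus\ospn_1$, the module decomposes as
\[
K(\lambda) = \bigoplus_{k=0}^{2n} K_k, \qquad K_k = \Span\{F_{\beta_1}\cdots F_{\beta_k} \cdot S_0(\lambda)\},
\]
where $\beta_1 < \cdots < \beta_k$ are distinct odd positive roots. Each $K_k$ is a $\overline{U}_q(\ospn_{\p 0})$-submodule, because the odd negative root vectors span a $\overline{U}_q(\ospn_{\p 0})$-stable space modulo lower terms; this is the quantum analogue of $\Lambda^k(\ospn_{-1})\otimes S_0(\lambda)$. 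These summands are distinguished by the eigenvalue of the central-in-$\overline{U}_q(\ospn_{\p 0})$ element $K_0$-type grading: concretely, the $\delta_0$-component of the weight is $\lambda(\delta_0\text{-part}) - k$. So the $\Z$-grading by $\delta_0$-weight is visible through the $H$-action, and $M$ is graded, $M = \bigoplus_k (M\cap K_k)$.

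\textbf{Reaching the bottom degree.} Take the largest $k$ with $M\cap K_k \neq 0$. If $k<2n$, pick $0\ne m\in M\cap K_k$. I claim that some odd $F_\beta$ satisfies $F_\beta m\neq 0$. Multiplication by $F_{\beta}$ gives maps $K_k\to K_{k+1}$, and the odd negative root vectors anticommute up to lower-order $q$-corrections. Hence the top-degree product $\Gamma^-_{\p 1}$ factors through any product of $k$ distinct odd $F$'s: if $x = F_{\beta_1}\cdots F_{\beta_k} w$, then multiplying by the complementary $F$'s yields $\pm q^{c}\,\Gamma^-_{\p 1} w$ plus terms killed by $F_\beta^2=0$. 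Choosing the leading term of $m$ in PBW order and applying the complementary product gives a non-zero element of $K_{2n}$, a contradiction. Thus $k=2n$, i.e. $M\cap \Gamma^-_{\p1}\cdot S_0(\lambda)\ne 0$. This leading-term argument is the \emph{main obstacle}. It requires a quantum exterior-algebra structure: that the odd $F_\alpha$ ($\alpha\in\Delta^+_{\p1}$) satisfy $F_\alpha^2=0$ and $q$-anticommute modulo nothing of the same degree. This should follow from Yamane's PBW basis and the commutation relations among odd root vectors, since $[\ospn_{-1},\ospn_{-1}]=0$ classically and quantum corrections would need to be even root vectors of degree $-2$, which do not exist. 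I would first verify this via $\delta_0$-degree: any correction lies in degree $-2$ of $\ospn_{-1}\cdot\ospn_{-1}$, hence is a product of odd root vectors again, and the $q$-commutation can be checked on the rank-two subalgebras.

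\textbf{Conclusion.} By Lemma \ref{lem:gammaCentral}, $\Gamma^-_{\p 1}$ commutes with $\overline{U}_q(\mathfrak{sp}(2n))$, and it $q$-commutes with $K_0,H_0$. Hence $w\mapsto \Gamma^-_{\p 1}w$ is a $\overline{U}_q(\ospn_{\p0})$-module map from $S_0(\lambda)$ onto $K_{2n}$ up to a weight shift by $-2\rho_{\p 1}$. It is injective by PBW freeness, so $K_{2n}\cong S_0(\lambda-2\rho_{\p1})$ is simple. Therefore the non-zero $\overline{U}_q(\ospn_{\p 0})$-submodule $M\cap K_{2n}$ equals $K_{2n}=\Gamma^-_{\p1}\cdot S_0(\lambda)$, which proves Lemma \ref{lem:submoduleOfKac}.
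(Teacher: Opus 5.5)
Your overall route is the paper's: expand an element of the submodule in the PBW form $\sum F_{\vec\alpha}\cdot v_{\vec\alpha}$, multiply by the complementary product of odd negative root vectors to land in $\Gamma^-_{\p 1}\cdot S_0(\lambda)$, and finish with Lemma \ref{lem:gammaCentral} and simplicity of $S_0(\lambda)$. Splitting $M$ by the $\delta_0$-coordinate of weights (which the $H_i$ detect) is correct and a slightly cleaner substitute for the paper's choice of a term of minimal length. Your final paragraph is also fine.

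\textbf{The gap is the step you flag yourself.} You justify it by claiming the odd $F_\alpha$ form an honest $q$-exterior algebra, so that every cross term is ``killed by $F_\beta^2=0$''. This is false for $n\ge 2$. Corrections in $\delta_0$-degree $-2$ need not be even root vectors: they can be products of two \emph{other} odd root vectors with the same total weight, and such corrections do occur.

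Take $\osp(2\vert 4)$ with $\beta_1=\delta_0-\delta_1<\beta_2=\delta_0-\delta_2<\beta_3=\delta_0+\delta_2<\beta_4=\delta_0+\delta_1$. Under the adjoint action, the odd root vectors span a copy of the vector representation of $U_q(\mathfrak{sp}(4))$. The kernel of multiplication on this span contains the submodule generated by $E_{\beta_1}\otimes E_{\beta_1}$, since $E_0^2=0$. A short computation of its weight-zero part, which reflects the mixing of the pairs $(i,i')$ in the symplectic $R$-matrix, forces a relation
$F_{\beta_1}F_{\beta_4}=\mu F_{\beta_4}F_{\beta_1}+\nu F_{\beta_3}F_{\beta_2}$, where $\nu$ is a non-zero multiple of $q-q^{-1}$.

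Hence $(F_{\beta_4}F_{\beta_1})^2=\nu\,\Gamma^-_{\p 1}\neq 0$. This is a product with a repeated factor that is not killed. Concretely, take $m=F_{\beta_4}F_{\beta_1}v+F_{\beta_3}F_{\beta_2}v'\in K_2$. Applying the complement $F_{\beta_4}F_{\beta_1}$ of $\{\beta_2,\beta_3\}$ gives $\Gamma^-_{\p 1}(\nu v+c\,v')$ with $c\neq 0$, and this vanishes for a suitable $v'$. So the ``leading term'' must be chosen in a specific way, and your proposal gives no reason why a good choice exists.

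\textbf{What is missing is a triangularity statement.} The odd roots form an initial segment of Yamane's convex order. Levendorskii--Soibelman type relations therefore express $F_\beta F_\alpha$ minus a scalar multiple of $F_\alpha F_\beta$ through monomials in root vectors strictly between $\alpha$ and $\beta$. Filter PBW monomials lexicographically so that these corrections are strictly lower; the associated graded of the odd subalgebra is then a genuine $q$-skew exterior algebra.

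Now take $\vec\beta$ maximal among the subsets occurring in $m$. For every other occurring $\vec\alpha$, the product $F_{\vec\beta^c}F_{\vec\alpha}$ lies strictly below the filtration level of $\Gamma^-_{\p 1}$. Since $\Gamma^-_{\p 1}$ alone spans the top degree, this product vanishes. Meanwhile $F_{\vec\beta^c}F_{\vec\beta}$ is a non-zero multiple of $\Gamma^-_{\p 1}$. Equivalently, what you need is non-degeneracy of the multiplication pairing between degrees $k$ and $2n-k$ of the odd subalgebra.

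With this in hand your argument goes through. The paper does not derive this point either: it takes the corresponding statement from \cite[Lem.~5(1)]{zhang1993}. The example above shows that, when several terms of the same length occur, which one you pick genuinely matters.
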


\begin{proof}
A direct calculation shows that $[H_0, \Gamma^-_{\p 1}]= 2n \Gamma^-_{\p 1}$. Together with Lemma \ref{lem:gammaCentral}, this implies that $\Gamma^-_{\p 1} \cdot S_0(\lambda)$ is indeed a $\overline{U}_q(\ospn_{\p 0})$-submodule of $K(\lambda)$.

Let $w \in K(\lambda)$ be non-zero. To prove the lemma, it suffices to prove that the $\qhospnres$-submodule generated by $w$ contains $\Gamma^-_{\p 1} \cdot S_0(\lambda)$. By the Poincar\'{e}--Birkhoff--Witt Theorem, we can write $w$ as a sum of non-zero terms
\[
w = \sum_{\vec{\alpha}} F_{\vec{\alpha}} \cdot v_{\vec{\alpha}}
\]
indexed by strictly decreasing (with respect to $<$) subsets $\vec{\alpha} \subset \Delta_{\p 1}^+$. Here $F_{\vec{\alpha}}$ is the corresponding ordered product of odd root vectors and $v_{\vec{\alpha}} \in S_0(\lambda)$. Let $\vec{\alpha}_0$ be a minimal (by length) subset which appears in the sum and $\vec{\alpha}_0^c =  \Delta^+_{\p 1} \setminus \vec{\alpha}_0$ its complement. By \cite[Lem. 5(1)]{zhang1993}, the vector $F_{\vec{\alpha}_0^c} \cdot w$ is proportional to $\Gamma^-_{\p 1} \cdot v_{\vec{\alpha}_{0}}$. The submodule generated by $w$ therefore contains an element of $\Gamma^-_{\p 1} \cdot S_0(\lambda)$, and hence the entirety of $\Gamma^-_{\p 1} \cdot S_0(\lambda)$ since $S_0(\lambda)$ is a simple $\overline{U}_q(\ospn_{\p 0})$-module.
\end{proof}

The Kac module $K(\lambda)$ is called \emph{typical} if it is simple. We use the same terminology for the weight $\lambda \in \Cart^{\vee}$.

\begin{Prop}[{\emph{Cf.} \cite[Prop. 2]{zhang1993}}]
\label{prop:kacTypical}
The Kac module $K(\lambda)$ is typical if and only if
\[
\prod_{\alpha \in \Delta_{\p 1}^+} \left\{ \langle \lambda+\rho,\alpha \rangle \right\}_q \neq 0.
\]
\end{Prop}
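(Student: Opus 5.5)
The plan is to use Lemma \ref{lem:submoduleOfKac}. It says every non-zero submodule of $K(\lambda)$ contains $\Gamma^-_{\p 1}\cdot S_0(\lambda)$. Therefore $K(\lambda)$ is simple if and only if the submodule generated by $\Gamma^-_{\p 1}\cdot S_0(\lambda)$ is all of $K(\lambda)$.

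Write $v_\lambda$ for a highest weight vector of $S_0(\lambda)$. The module $K(\lambda)$ is generated by $1\otimes v_\lambda$, and $S_0(\lambda)$ is simple. So simplicity is equivalent to $\Gamma^+_{\p 1}\Gamma^-_{\p 1}\cdot v_\lambda \neq 0$. Indeed, $\Gamma^+_{\p 1}$ kills all of $K(\lambda)$ except the part of top $H_0$-degree, which is $1\otimes S_0(\lambda)$.

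For the forward direction, suppose $\Gamma^+_{\p 1}\Gamma^-_{\p 1} v_\lambda = c\, v_\lambda$ with $c\neq 0$. Then $v_\lambda$ lies in the submodule generated by $\Gamma^-_{\p 1} v_\lambda$, so that submodule is everything. For the converse, suppose $c=0$. Then the submodule $\qhospnres\,\Gamma^-_{\p 1}\cdot S_0(\lambda)$ has no component in the top $H_0$-degree. This uses the PBW factorization $U^-_{\p 1}U_q(\ospn_{\p 0})U^+_{\p 1}$, the fact that applying $\Gamma^+_{\p 1}$-type products to $\Gamma^-_{\p 1}S_0(\lambda)$ lands in $\C\,\Gamma^+_{\p1}\Gamma^-_{\p1} S_0(\lambda)$, and the centrality of Lemma \ref{lem:gammaCentral}. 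So it is a proper non-zero submodule. The two directions together give the reduction.

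It remains to compute the scalar $c$. Since $\Gamma^+_{\p 1}\Gamma^-_{\p 1}$ has weight zero, and by Lemma \ref{lem:gammaCentral} commutes with $\overline{U}_q(\mathfrak{sp}(2n))$, its action on $1\otimes v_\lambda$ equals the action of its projection onto $U^0$ under the Harish-Chandra decomposition, modulo terms that vanish on $v_\lambda$. The claim I would aim to prove is
\[
\Gamma^+_{\p 1}\Gamma^-_{\p 1} v_\lambda = \kappa \prod_{\alpha\in\Delta^+_{\p 1}} \{\langle\lambda+\rho,\alpha\rangle\}_q \, v_\lambda, \qquad \kappa \neq 0,
\]
which is the content of \cite[Prop. 2]{zhang1993} (and the $h$-adic analogue in \cite{dejeugt1990}). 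To prove it, I would order $\Delta^+_{\p 1}$ decreasingly and commute each $E_\alpha$ past the $F_\beta$ one at a time. Each step uses $[E_\alpha,F_\alpha]\sim (K_\alpha-K_\alpha^{-1})$ plus correction terms involving $F_\beta$ with $\beta<\alpha$ or even root vectors. The even root vectors annihilate $v_\lambda$ after being moved right. Acting on a vector of weight $\lambda - \sum_{\beta>\alpha}\beta$ produces the shifted factor $\{\langle\lambda,\alpha\rangle - \sum_{\beta>\alpha}\langle\beta,\alpha\rangle\}_q$. One then checks, root by root, that this shift equals $\langle\rho,\alpha\rangle$ up to a sign that does not affect vanishing, using Lemma \ref{lem:rhoPairings}\eqref{ite:oddRhoOdd}. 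Finally, the evaluation at roots of unity is compatible: the specialization from the generic-$q$ formula is valid because only the odd generators and $U^0$ enter the computation. The truncation by $I$ touches only even $r$th powers, which do not appear.

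The main obstacle is this last computation, namely controlling the correction terms in the commutators of odd root vectors in Yamane's basis and verifying that the cumulative shift is exactly $\rho$. I would first try to cite \cite[Lem. 5]{zhang1993} directly, adjusting for the nonzero rescaling of the odd root vectors noted after Definition \ref{def:rootVect}. If that fails, I would proceed by induction on $n$, grouping $\Delta^+_{\p 1}$ into the pairs $\delta_0\pm\delta_i$.
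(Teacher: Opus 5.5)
Your proposal follows the paper's proof: Lemma~\ref{lem:submoduleOfKac} reduces typicality to $\Gamma^+_{\p 1}\Gamma^-_{\p 1}v_\lambda\neq 0$, and the paper takes $\Gamma^+_{\p 1}\Gamma^-_{\p 1}v_\lambda=c\prod_{\alpha\in\Delta^+_{\p 1}}\{\langle\lambda+\rho,\alpha\rangle\}_q\,v_\lambda$ with $c\neq 0$ from \cite[Lem.~9]{zhang1993}. Your argument for the converse is more careful than the paper's ``since it does not contain $v_0$'', though two details are off: the grading should be by the $\delta_0$-coefficient of weights, not by $H_0$ (which does not commute with $E_1$), and $\Gamma^+_{\p 1}$ acts non-trivially only on the bottom layer $\Gamma^-_{\p 1}S_0(\lambda)$, not on the top one. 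Your fallback hand computation is not needed, and as sketched it would fail. The commutators $[E_\alpha,F_\beta]$ produce negative even root vectors (e.g.\ $[E_{\alpha_0},F_{\delta_0+\delta_1}]$ has weight $-2\delta_1$), which do not annihilate $v_\lambda$. The shifts $-\sum_{\beta>\alpha}\langle\beta,\alpha\rangle$ also do not match $\langle\rho,\alpha\rangle$ root by root: for $\alpha=\alpha_0$ you get $2n$, whereas $\langle\rho,\alpha_0\rangle=0$.
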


\begin{proof}
Let $v_0 \in S_0(\lambda)$ be a highest-weight vector and $w \in K(\lambda)$ non-zero. By Lemma \ref{lem:submoduleOfKac}, the submodule generated by $w$ contains the non-zero vector $\Gamma^-_{\p 1} \cdot v_0$ and hence also $\Gamma^+_{\p 1} \Gamma^-_{\p 1} \cdot v_0$. By Lemma \ref{lem:gammaCentral} and Schur's Lemma, $\Gamma^+_{\p 1} \Gamma^-_{\p 1} \cdot v_0$ is a scalar multiple of $v_0$. More precisely, \cite[Lem. 9]{zhang1993} shows that $\Gamma^+_{\p 1} \Gamma^-_{\p 1} \cdot v_0 = c \chi(\lambda) v_0$, where $c \in \C^{\times}$ and $\chi(\lambda) = \prod_{\alpha \in \Delta_{\p 1}^+} \left\{ \langle \lambda+\rho,\alpha \rangle \right\}_q$. If $\chi(\lambda) = 0$, then $\Gamma^-_{\p 1} \cdot v_0$ generates a strict submodule of $K(\lambda)$ (since it does not contain $v_0$), whence $K(\lambda)$ is not simple. If $\chi(\lambda) \neq 0$, then $\Gamma^-_{\p 1} \cdot v_0$ generates a submodule which contains $v_0$ and hence is the entirety of $K(\lambda)$, whence $K(\lambda)$ is simple.
\end{proof}


\begin{Lem}
\label{lem:dualKac}
If $\lambda \in \Cart^{\vee}$ is typical and $S_0(\lambda) = V_0(\lambda)$ is a simple Verma $\overline{U}^H_q(\ospn_{\p 0})$-module, then there is an isomorphism $K(\lambda)^{\vee} \simeq K(-\lambda + \pi)$.
\end{Lem}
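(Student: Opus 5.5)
The plan is to exhibit a highest-weight vector of weight $-\lambda+\pi$ in $K(\lambda)^{\vee}$. Frobenius reciprocity then gives a non-zero map $K(-\lambda+\pi) \rightarrow K(\lambda)^{\vee}$, and simplicity together with a dimension count shows it is an isomorphism.

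\emph{Step 1: lowest weight of $K(\lambda)$.} Since $S_0(\lambda)=V_0(\lambda)$ is a restricted Verma module, it has a PBW basis $\prod_{\alpha \in \Delta^+_{\p 0}} F_\alpha^{k_\alpha}v_0$ with $0 \leq k_\alpha \leq r-1$. In particular $\dim V_0(\lambda) = r^{|\Delta^+_{\p 0}|}$, and its unique lowest-weight line is spanned by $\Gamma^-_{\p 0}v_0$, of weight $\lambda - (r-1)\cdot 2\rho_{\p 0}$. Applying $\Gamma^-_{\p 1}$ lowers the weight further by $2\rho_{\p 1}$. Hence $K(\lambda)$ has the one-dimensional lowest weight space spanned by $u:=\Gamma^-_{\p 1}\Gamma^-_{\p 0}v_0$, of weight
\[
\lambda - 2(r-1)\rho_{\p 0} - 2\rho_{\p 1}.
\]
This vector is non-zero by the PBW theorem, or alternatively by Lemma \ref{lem:submoduleOfKac} combined with Lemma \ref{lem:gammaCentral}.

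\emph{Step 2: a highest-weight vector in the dual.} Let $f \in K(\lambda)^{\vee}$ be the dual vector to $u$ with respect to a weight basis. Then $f$ has weight
\[
-\lambda + 2(r-1)\rho_{\p 0}+2\rho_{\p 1} = -\lambda + 2r\rho_{\p 0} - 2\rho = -\lambda+\pi.
\]
For each $i$, $(E_i f)(v) = \pm f(S(E_i)v) = \mp f(K_i^{-1}E_iv)$. The vector $E_iv$ has weight strictly above that of $v$, so it never has weight equal to the lowest weight. Since $f$ vanishes on all weight spaces other than that of $u$, we get $E_if=0$ for all $0 \leq i \leq n$. The same argument shows that $f$ is a weight vector for the $H_i$ with compatible $K_i$-action.

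Consider the $\overline{U}_q^H(\ospn_{\p 0})$-submodule generated by $f$. It is a highest-weight module of weight $-\lambda+\pi$, hence a quotient of the restricted Verma module $V_0(-\lambda+\pi)$. Moreover $E_0$ kills it, because $E_0$ commutes with $F_j$ for $j \geq 1$ and $q$-commutes with the $K_j$. Therefore $V_0(-\lambda+\pi) \rightarrow K(\lambda)^{\vee}$ is a $\overline{U}_q^H(\mathfrak{p})$-module map. Frobenius reciprocity for $K(-) = \qhospnres \otimes_{\overline{U}_q^H(\mathfrak{p})} (-)$ yields a non-zero $\qhospnres$-map
\[
\Phi: K(V_0(-\lambda+\pi)) \rightarrow K(\lambda)^{\vee}.
\]
If $f$ turns out to be odd, I would either note that the isomorphism holds up to parity shift or choose the parity of $V_0$ accordingly.

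\emph{Step 3: conclude.} Since $\lambda$ is typical, $K(\lambda)$ is simple, and therefore so is $K(\lambda)^{\vee}$. Thus $\Phi$ is surjective. Both sides have dimension $2^{2n} r^{|\Delta^+_{\p 0}|}$, so $\Phi$ is an isomorphism. In particular $K(V_0(-\lambda+\pi))$ is simple, which forces $V_0(-\lambda+\pi)$ to be simple (any proper submodule would induce a proper submodule by exactness of induction). Hence $S_0(-\lambda+\pi)=V_0(-\lambda+\pi)$ and $K(\lambda)^{\vee} \simeq K(-\lambda+\pi)$.

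The main obstacle is Step 1: verifying carefully that $\Gamma^-_{\p 1}\Gamma^-_{\p 0}v_0 \neq 0$ spans the full lowest weight space, and tracking the weight bookkeeping $2(r-1)\rho_{\p 0}+2\rho_{\p 1} = \pi$ along with signs and parities in the dual action. The dimension-count argument in Step 3 is designed to avoid checking directly that $-\lambda+\pi$ satisfies the Verma genericity condition.
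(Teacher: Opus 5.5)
Your argument is correct, and it reaches the conclusion by a different route from the paper. The paper also identifies the lowest weight of $K(\lambda)$ as $\lambda-\pi$. It then proves directly that $-\lambda+\pi$ is typical, via the identity $\{\langle -\lambda+\pi+\rho,\alpha\rangle\}_q=-\{\langle\lambda+\rho,\alpha\rangle\}_q$ for $\alpha\in\Delta^+_{\p 1}$, which uses $\langle 2r\rho_{\p 0},\alpha\rangle\in 2r\Z$ from Lemma \ref{lem:rhoPairings}. It checks that the highest- and lowest-weight vectors of $K(\lambda)$ are both even. It concludes because $K(\lambda)^{\vee}$ and $K(-\lambda+\pi)$ are two simple highest-weight modules with the same highest weight and parity.

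You instead obtain a map $K(V_0(-\lambda+\pi))\to K(\lambda)^{\vee}$ from Frobenius reciprocity, and finish with simplicity of $K(\lambda)^{\vee}$ and a dimension count. This bypasses the $\rho$-shift computation: typicality of $-\lambda+\pi$ comes out as a consequence. Via exactness of induction, it also shows that $V_0(-\lambda+\pi)$ is a simple Verma module, so $-\lambda+\pi$ again satisfies both hypotheses of the lemma, a point the paper leaves implicit. The paper's route is shorter given uniqueness of simple highest-weight modules, and it records the explicit typicality relation between $\lambda$ and $-\lambda+\pi$.

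The one loose end is parity. Your hedge (``isomorphism up to parity shift'' or ``choose the parity of $V_0$'') would not prove the lemma as stated. Morphisms in $\cat$ are parity-preserving, and $K(W)$ is defined with $W$ placed in degree $\p 0$. The point is immediate to settle, though: $v_0$ is even and $\Gamma^-_{\p 1}$ is a product of $|\Delta^+_{\p 1}|=2n$ odd root vectors. So $u=\Gamma^-_{\p 1}\Gamma^-_{\p 0}v_0$ is even, hence so is $f$, and $\Phi$ is a genuine morphism in $\cat$. This is exactly the parity check the paper makes, and you should state it rather than leave the alternative open.
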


\begin{proof}
First, note that $V_0(\lambda)^{\vee}$ is simple. The Poincar\'{e}--Birkhoff--Witt Theorem implies that the lowest weight of $K(\lambda)$ is
\[
\lambda - 2(r-1) \rho_{\p 0} - 2 \rho_{\p 1}
=
\lambda - \pi.
\]
The highest weight of $K(\lambda)^{\vee}$ is therefore $- \lambda + \pi$. Since $-\lambda + \pi + \rho = - \lambda - \rho + 2r \rho_{\p 0}$ and $\langle 2 r \rho_{\p 0}, \alpha \rangle \in 2 r \Z$ for all $\alpha \in \Delta_{\p 1}^+$ (see Lemma \ref{lem:rhoPairings}\eqref{ite:oddRhoOdd}), we find
\[
\left\{ \langle - \lambda + \pi + \rho, \alpha \rangle \right\}_q
=
- \left\{\langle \lambda + \rho, \alpha \rangle \right\}_q.
\]
Hence, typicality of $\lambda$ implies typicality of $-\lambda + \pi$. Since the parity of the highest-weight vector of $K(\lambda)$ is $\p 0$, the parity of its lowest-weight vector is the parity of $\vert \Delta_{\p 1}^+ \vert = 2n$, which is again $\p 0$. It follows that the map which assigns a highest-weight vector of $K(\lambda)^{\vee}$ to one of $K(-\lambda + \pi)$ extends to a $\qhospnres$-linear isomorphism.
\end{proof}

\subsection{Generic semisimplicity}

Let $\Gr$ be the abelian group $\Cart^{\vee} \slash \Lambda_R$. Write $[\lambda] \in \Gr$ for the class of $\lambda \in \Cart^{\vee}$. Let $\cat_{[\lambda]} \subset \cat$ be the full subcategory of modules whose weights lie in the class $[\lambda]$ and $\cat = \bigoplus_{[\lambda] \in \Gr} \cat_{[\lambda]}$ the resulting $\Gr$-grading.

Let $\overline{U}^{< 0}_q(\ospn)$ be the subalgebra of $\qhospnres$ generated by $F_i$, $0 \leq i \leq n$. The Poincar\'{e}--Birkhoff--Witt Theorem gives $\dim_{\C} \overline{U}^{< 0}_q(\ospn)
= r^{n^2}2^{2n}$. Note that this dimension is an upper bound of the dimension of a highest-weight $\qhospnres$-module.

\begin{Def}
Let $\SSS \subset \Gr$ be the set of those $[\lambda] \in \Gr$ such that $\cat_{[\lambda]}$ contains a simple module of dimension strictly less than $r^{n^2}2^{2n}$.
\end{Def}

\begin{Lem}
\label{lem:smallSymmetricSubset}
 The set $\SSS$ is small and symmetric in $\Gr$.
\end{Lem}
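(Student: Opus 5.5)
The plan is to show that $\SSS$ lies inside a finite union of images in $\Gr$ of affine hyperplanes (translated by lattices), and then handle symmetry separately using duality.

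\emph{Step 1 (containment in a finite union of translated hyperplanes).} Let $V \in \cat_{[\lambda]}$ be simple. A weight vector of maximal $\delta_0$-component, then maximal in the $\mathfrak{sp}(2n)$-direction, is killed by all $E_i$. This is possible because $V$ is finite-dimensional, so $V$ is a highest-weight module of some highest weight $\mu \in [\lambda]$. Hence $V$ is a quotient of $K(V_0(\mu))$, and therefore of $K(\mu)$ when $V_0(\mu)$ is simple.

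Now suppose $\mu$ satisfies two conditions. First, $V_0(\mu)$ is a simple Verma module. By \cite[Thm. 3.2]{deconcini1990} this holds whenever $\{\langle \mu+\rho_{\p 0},\beta\rangle - k d_\beta\}_q \neq 0$ for all $\beta \in \Delta^+_{\p 0}$ and $1 \le k \le r-1$; it is a finite set of conditions of the form $\langle \mu,\beta\rangle \notin c + \tfrac{r}{2}\Z$. Second, $\mu$ is typical in the sense of Proposition \ref{prop:kacTypical}, i.e. $\langle \mu+\rho,\alpha\rangle \notin \tfrac r2 \Z$ for $\alpha \in \Delta^+_{\p 1}$. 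Under both conditions $V \simeq K(\mu)$, which has dimension $2^{2n} r^{n^2}$. Consequently
\[
\SSS \subset \bigcup_{\beta \in \Delta^+,\, c \in C} \{[\mu] \mid \langle \mu,\beta\rangle \in c + \tfrac r2\Z\} ,
\]
for a finite set $C \subset \C$.

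The key observation is that the condition $\langle \mu,\beta\rangle \in c+\tfrac r2\Z$ depends only on $[\mu]$ up to a countable ambiguity. Indeed, $\langle \Lambda_R,\beta\rangle \subset \Z$, so each set above is the image of a countable union of affine hyperplanes $\{\mu : \langle \mu,\beta\rangle = c'\}$ in $\Cart^\vee \simeq \C^{n+1}$. Therefore $\SSS$ is contained in the image of a countable union of proper affine hyperplanes.

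\emph{Step 2 (smallness).} A finite union of translates $\bigcup_i (g_i+\SSS)$ lifts to a countable union of proper affine hyperplanes in $\C^{n+1}$ (saturated by the countable group $\Lambda_R$). Such a union cannot cover $\C^{n+1}$, by Baire category or by a measure argument. Hence $\bigcup_i (g_i+\SSS) \neq \Gr$, which is exactly smallness.

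\emph{Step 3 (symmetry).} Duality $V \mapsto V^\vee$ is an exact contravariant autoequivalence that sends simples to simples and preserves dimension. By Definition \ref{def:Gstr} it maps $\cat_{[\lambda]}$ to $\cat_{[-\lambda]}$; directly, the weights of $V^\vee$ are the negatives of those of $V$. Hence $[\lambda] \in \SSS$ if and only if $[-\lambda] \in \SSS$.

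The main obstacle is Step 1, specifically pinning down a usable genericity criterion for simplicity of restricted unrolled Verma modules over $\overline{U}^H_q(\ospn_{\p 0})$, in a form that cuts out hyperplanes. For this I would cite \cite{deconcini1990} and \cite[Prop. 34]{geer2013b}: the Shapovalov-type determinant of the restricted Verma module is a product of factors $\{\langle\mu+\rho_{\p 0},\beta\rangle - k d_\beta\}_q$. One must also check that every simple module is genuinely a quotient of $K(\mu)$. This uses the universal property of induction from $\overline{U}^H_q(\mathfrak p)$: the $E_0$-invariant part of the top $\delta_0$-layer of $V$ is a $\overline{U}^H_q(\ospn_{\p 0})$-module containing a highest-weight vector of weight $\mu$.
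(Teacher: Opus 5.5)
Your proof is correct and follows essentially the same route as the paper. Symmetry comes from duality preserving simplicity and dimension. Smallness holds because, away from the image in $\Gr$ of finitely many families of hyperplanes (where $V_0(\mu)$ fails to be simple or $\mu$ is atypical), every simple module is a typical Kac module of dimension $r^{n^2}2^{2n}$. Your explicit handling of the $\Lambda_R$-saturation, bounding $\SSS$ by the image of the bad set and then using a Baire or measure argument, makes precise what the paper states only loosely as ``the complement of the image of $\mathsf{Y}$'' being small because $\mathsf{Y}$ is open and dense.
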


\begin{proof}
Symmetry follows from the observation that duality $(-)^{\vee}: \cat_{[\lambda]} \rightarrow \cat_{-[\lambda]}$ preserves the dimension and simplicity of a module. Consider the set
\[
\mathsf{Y} = \{ \lambda \in \Cart^{\vee} \mid V_0(\lambda) \mbox{ is a simple } \overline{U}_q^H(\ospn_{\p 0})\mbox{-module and } \lambda \mbox{ is typical}\}. 
\]
The characterization of simple Verma $\overline{U}_q^H(\ospn_{\p 0})$-modules \cite[Prop. 17]{geer2013b} together with Proposition \ref{prop:kacTypical} shows that $\mathsf{Y} \subset \Cart^{\vee}$ is open and dense. Any simple module $V \in \cat_{[\lambda]}$, $\lambda \in \mathsf{Y}$, has dimension $r^{n^2}2^{2n}$. Hence, $\SSS$ is contained in the complement of the image of $\mathsf{Y}$ in $\Gr$ and is therefore small. 
\end{proof}

We recall the following result.

\begin{Prop}[{\cite[Thm. 3.2.2]{etingof2011}}]
\label{prop:density}
Let $A$ be a $\C$-algebra and $\{\rho_j: A \rightarrow \End_{\C}(V_j)\}_{j \in J}$ a finite set of pairwise non-isomorphic finite-dimensional simple $A$-modules. Then the map $\bigoplus_{j \in J} \rho_j: A \rightarrow \bigoplus_{j \in J} \End_{\C}(V_j)$ is surjective. 
\end{Prop}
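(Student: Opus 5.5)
The plan is the classical Jacobson density argument, specialized to finite dimensions over the algebraically closed field $\C$. Set $V = \bigoplus_{j \in J} V_j$. This is a finite-dimensional semisimple $A$-module, with action $\rho = \bigoplus_j \rho_j$.

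\emph{Step 1: compute $\End_A(V)$.} By Schur's Lemma over $\C$, $\End_A(V_j) = \C \cdot \id_{V_j}$, using that $V_j$ is finite-dimensional and simple. Pairwise non-isomorphism of the $V_j$ gives $\Hom_A(V_j,V_k)=0$ for $j \neq k$. Hence $\End_A(V) = \bigoplus_j \C \cdot \id_{V_j}$. It follows that the commutant of $\End_A(V)$ inside $\End_\C(V)$ is exactly $\bigoplus_j \End_\C(V_j)$. This is the target algebra, so it suffices to show that every $f$ in this commutant equals $\rho(a)$ for some $a \in A$.

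\emph{Step 2: the density trick.} Choose a $\C$-basis $v_1,\dots,v_N$ of $V$ and consider the module $V^{N} = V^{\oplus N}$ with the vector $\mathbf{v} = (v_1,\dots,v_N)$. The module $V^N$ is a finite direct sum of simple modules, hence semisimple. Therefore the submodule $A\mathbf{v}$ has an $A$-stable complement, and the corresponding projection $p: V^N \to A\mathbf{v}$ lies in $\End_A(V^N) = \mathrm{Mat}_N(\End_A(V))$.

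Given $f$ in the commutant of $\End_A(V)$, the diagonal map $f^{\oplus N}$ commutes with every element of $\mathrm{Mat}_N(\End_A(V))$, and in particular with $p$. Hence
\[
f^{\oplus N}(\mathbf{v}) = f^{\oplus N}(p\mathbf{v}) = p\, f^{\oplus N}(\mathbf{v}) \in A\mathbf{v}.
\]
So there is $a \in A$ with $a v_i = f(v_i)$ for all $i$. Since the $v_i$ form a basis, $\rho(a) = f$, which proves surjectivity.

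\emph{Main obstacle.} The only genuinely non-formal input is the claim that a submodule of a finite direct sum of simple modules is a direct summand. I would prove this by the standard argument: choose a subset of the simple summands that is maximal with respect to intersecting $A\mathbf{v}$ trivially. Each simple summand $S$ then either lies in this maximal subset, or adjoining it breaks the trivial-intersection property, which by simplicity of $S$ forces $S \subset A\mathbf{v} \oplus (\text{sum of the subset})$. Hence $A\mathbf{v}$ together with the subset spans $V^N$, and the sum of the chosen summands is the desired complement. Everything else is bookkeeping with Schur's Lemma.
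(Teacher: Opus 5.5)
Your proof is correct, but it takes a different route from the one the paper relies on. The paper gives no argument of its own; it quotes the result from \cite[Thm. 3.2.2]{etingof2011}.

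\textbf{The cited proof.} It works in two stages.
\begin{enumerate}
\item Schur's Lemma is used to show that every subrepresentation of $\bigoplus_i n_i V_i$ has the form $\bigoplus_i r_i V_i$, embedded via $r_i \times n_i$ matrices with linearly independent rows.
\item For a single simple $V$ with basis $v_1,\dots,v_n$, this is applied to the image of $a \mapsto (av_1,\dots,av_n)$ in $V^{\oplus n}$. A linear-independence contradiction then gives surjectivity onto $\End_\C(V)$. The case of several pairwise non-isomorphic $V_j$ follows by noting that the image of $A$ in $\bigoplus_j \End_\C(V_j) \simeq \bigoplus_j (\dim V_j)\, V_j$ is a subrepresentation, so it splits along isotypic components.
\end{enumerate}

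\textbf{Your proof.} You use the Jacobson bicommutant form of the density theorem.
\begin{itemize}
\item You identify the target once, as the commutant of $\End_A(V)$.
\item You then apply the diagonal trick in $V^{\oplus N}$, which handles all the $V_j$ at the same time.
\item The only structural input is that a submodule of a finite sum of simples is a direct summand. Your maximal-subset argument proves this correctly: if $S \cap (A\mathbf{v} \oplus \bigoplus_T) = 0$, then $A\mathbf{v} \cap (\bigoplus_T \oplus S) = 0$, which contradicts maximality.
\end{itemize}

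\textbf{What each route buys.} Your argument is shorter and more general. It shows that the image of $A$ is the bicommutant for any finite-dimensional semisimple module over any field. Schur's Lemma over $\C$ is needed only to identify that bicommutant with $\bigoplus_j \End_\C(V_j)$. The cited proof stays with explicit matrices, and as a by-product it gives a concrete description of subrepresentations of semisimple modules.

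\textbf{Implicit hypothesis.} You use that $A$ is unital, so that $\mathbf{v} \in A\mathbf{v}$ and $p\mathbf{v} = \mathbf{v}$. This holds for the algebra $\overline{U}^{[\lambda]}_q(\ospn)$ to which the paper applies the result.
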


\begin{Thm}
\label{thm:genericSemiSimple}
The $\Gr$-graded category $\cat$ is generically semisimple with critical set $\SSS$.
\end{Thm}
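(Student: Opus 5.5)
Fix $[\lambda] \in \Gr \setminus \SSS$. The aim is to show that every module $V \in \cat_{[\lambda]}$ is a direct sum of simple modules, each of dimension $N := r^{n^2}2^{2n}$. The argument runs in four steps.

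\emph{Step 1: weight spaces and composition factors.} A module $V \in \cat_{[\lambda]}$ is finite-dimensional, and since $H_i$ acts semisimply, $V$ decomposes into weight spaces with weights in $\lambda + \Lambda_R$. By the definition of $\SSS$, every composition factor of $V$ has dimension at least $N$. Any simple weight module has a highest-weight vector, because $V$ is finite-dimensional and the $E_i$ raise weights. It is therefore a quotient of the universal highest-weight module $\qhospnres \otimes_{\overline{U}^{\geq 0}} \C_\mu$, which has dimension at most $\dim \overline{U}^{<0}_q(\ospn) = N$. Consequently each simple module in $\cat_{[\lambda]}$ is isomorphic to this universal highest-weight module $M(\mu)$. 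In particular $M(\mu)$ is simple, and $M(\mu)$ is free as a module over $\overline{U}^{<0}_q(\ospn)$.

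\emph{Step 2: reduce to the vanishing of extensions.} It suffices to prove $\mathrm{Ext}^1_{\cat}(S,S') = 0$ for simple $S, S' \in \cat_{[\lambda]}$; an induction on composition length then gives semisimplicity. Take a short exact sequence $0 \to S' \to V \to S \to 0$ with $S \simeq M(\mu)$.

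\emph{Step 3: split the sequence.} If $\mu$ is not strictly smaller than some weight of $S'$, pick a weight vector $v \in V$ of weight $\mu$ lifting the highest-weight vector of $S$. Then $E_i v$ is either zero or a weight vector in $S'$ of weight $\mu + \alpha_i$. I would first arrange for $\mu$ to be maximal among the weights of $V$, using the symmetric argument with lowest weights and duality (Lemma \ref{lem:dualKac} and the symmetry of $\SSS$ from Lemma \ref{lem:smallSymmetricSubset}) to handle the other order. Once $\mu$ is maximal, $v$ is a highest-weight vector. The universal property then gives a map $M(\mu) \to V$. Its composite to $S$ is nonzero, so this map is an isomorphism onto its image, and this splits the sequence.

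If instead the weights are interleaved, I would use the central-type elements $K_i^r$ (Lemma \ref{lem:KPowerCentral}) together with a Casimir, or the action of $\C[\Cart]$. These should separate $\mu$ from the weights of $S'$ unless $S \simeq S'$. In the case $S \simeq S'$, I would instead use Proposition \ref{prop:density}. Let $A$ be the image of $\qhospnres$ in $\End(V)$. Because $S$ is a free rank-one $\overline{U}^{<0}$-module of maximal dimension, a vector of highest weight $\mu$ in $V$ annihilated by all $E_i$ exists. This comes from a dimension count: $\ker(E\text{'s})$ on the $\mu$-weight space of $V$ is nonzero because the self-extension must be nontrivial on weight spaces, and $H$ acts semisimply. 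The map $M(\mu) \to V$ then splits the sequence.

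\emph{Step 4: conclusion and main obstacle.} The main obstacle is producing a highest-weight vector in $V$ that maps to a generator of $S$ in the interleaved case. I would handle it with the projectivity-type argument: $M(\mu)$ represents the functor $V \mapsto \{v \in V_\mu : E_i v = 0\}$, and I would show this functor is exact on $\cat_{[\lambda]}$. By maximality of dimension, every simple module there is both a "Verma" and a "co-Verma" module. Pairing with the dual via Lemma \ref{lem:dualKac} shows that $M(\mu)$ is both projective and injective, which forces every extension to split.

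Once semisimplicity holds, the finite set $\Theta([\lambda])$ is obtained by choosing representatives modulo $\sigma(\FR)$. These are Kac modules $K(\mu)$ for typical $\mu$ with $V_0(\mu)$ simple, since their dimensions agree and they are simple by Proposition \ref{prop:kacTypical}.
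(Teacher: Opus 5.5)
\textbf{The proposal has a genuine gap: the hard extension case is never handled.}

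\emph{What works.} Your Steps 1 and 2 are sound. In a block $\cat_{[\lambda]}$ with $[\lambda]\notin\SSS$, every universal highest-weight module $M(\nu)$ is simple of dimension $N$, so it suffices to prove $\operatorname{Ext}^1_{\cat}(M(\mu),M(\mu'))=0$. The highest-weight argument of Step 3 also works whenever $\mu+\alpha_i$ is not a weight of $M(\mu')$ for any $i$. This already covers $S\simeq S'$, so the density detour you propose there is unnecessary.

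\emph{Where it fails.} Set $B=\{\sum_{\alpha\in\Delta^+}c_\alpha\alpha \mid 0\le c_\alpha\le m_{\p\alpha}\}$, so the weights of $M(\nu)$ are $\nu-B$. A lift of the highest-weight vector of $M(\mu)$ is automatically a highest-weight vector unless $\mu'-\mu-\alpha_i\in B$ for some $i$, for example $\mu'=\mu+\alpha_i$. Duality does not help. The weights of $M(\mu)^{\vee}$ are $-\mu+B=-\mu+\pi-B$, since $B=\pi-B$, and $M(\mu')^{\vee}$ has highest weight $-\mu'+\pi$. So the same argument applied to the dual sequence is blocked exactly when $\mu'-\mu-\alpha_j\in B$ for some $j$. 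For $\mu'=\mu+\alpha_i$ both routes are blocked, and this is precisely where the theorem has content.

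\emph{Why the proposed fixes do not close it.}
\begin{itemize}
\item The elements $K_i^r$ act on all of $\cat_{[\lambda]}$ by the single scalar $q^{r\langle\lambda,\alpha_i\rangle}$, so they separate nothing inside a block.
\item $\C[\Cart]$ is not central.
\item No central element separating $M(\mu)$ from $M(\mu+\alpha_i)$ is actually produced.
\item Step 4 is circular. Exactness of $V\mapsto\{v\in V_\mu\mid E_iv=0\}$ is equivalent to projectivity of $M(\mu)$, which is what must be proved. The paper deduces projectivity of $K(\lambda)$ in Proposition \ref{prop:kacProjective} \emph{from} this theorem.
\item Being simultaneously a Verma and a dual Verma module does not imply projectivity. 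In category $\mathcal{O}$ for $\mathfrak{sl}(2)$, $M(-2)$ is simple and self-dual but not projective.
\end{itemize}

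\emph{The missing idea.} You need a global argument that uses the hypothesis on the whole block at once, not extension by extension. The paper observes that every object of $\cat_{[\lambda]}$ is a module over the finite-dimensional algebra $\overline{U}^{[\lambda]}_q(\ospn)$. This algebra is obtained by trading the $H_i$ for $K_{w_i}$ and imposing $K_{w_i}^r=q_i^{r\lambda_i}$, and it has dimension $r^{n+1}N^2$. It has exactly $r^{n+1}$ simple modules, each of which lifts to $\cat_{[\lambda]}$ and therefore has dimension $N$. Proposition \ref{prop:density} makes $\overline{U}^{[\lambda]}_q(\ospn)\to\bigoplus_j\End_{\C}(V_j)$ surjective. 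Both sides have dimension $r^{n+1}N^2$, so the map is an isomorphism, and the algebra is semisimple by Wedderburn.

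An alternative would be BGG reciprocity, $[P(\mu):M(\nu)]=[M(\nu):L(\mu)]=\delta_{\mu\nu}$. That route requires constructing projective covers with Verma flags, which your proposal does not do.
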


\begin{proof}
The proof is a modification of that of \cite[Thm. 3.23]{anghel2021}. Fix $\lambda = \sum_{i=0}^n \lambda_i \alpha_i \in \Cart^{\vee}$ such that $\left[ \lambda \right] \in \Gr \setminus \SSS$. We prove that $\cat_{\left[ \lambda \right]}$ is semisimple.

We begin with a preliminary construction. View $\qhospnres$ as a module over the group algebra $\C[\Lambda_R]$ by letting $\alpha_i \in \Delta^+$ act by $K_i$. Let $\overline{U}^{\Lambda_W}_q(\ospn)$ be the subalgebra of $\C[\Lambda_W] \otimes_{\C[\Lambda_R]} \qhospnres$ generated by the fundamental dominant weights $K^{\pm 1}_{w_i}$ and $E_i$, $F_i$, $0 \leq i \leq n$. Consider the two-sided ideal $I^{[\lambda]}= \langle K_{w_i}^{r} - q_i^{r\lambda_i} \mid 0 \leq i \leq n \rangle$ and the finite-dimensional quotient
\[
\overline{U}^{[ \lambda ]}_q(\ospn) = \overline{U}^{\Lambda_W}_q(\ospn) \slash I^{[\lambda]}.
\]

Any object of $\cat_{[ \lambda ]}$ is naturally a module over $\overline{U}^{[ \lambda ]}_q(\ospn)$. Indeed, let $V \in \cat_{[\lambda]}$ with $v \in V$ of weight $\mu$, so that $K^r_{w_i} v = q^{r\langle w_i, \mu \rangle} v$. Since $\mu - \lambda \in \Lambda_R$, we have $\langle w_i, \mu \rangle \equiv \langle w_i, \lambda \rangle \mod \Z$ so that $q^{r \langle w_i, \mu \rangle} = q^{r \langle w_i, \lambda \rangle}$. Since $\langle w_i, \lambda \rangle = d_i \lambda_i$, we have $q^{r \langle w_i, \mu \rangle} = q^{r d_i \lambda_i}$. We conclude that $K_{w_i}^r - q_i^{r \lambda_i}$ acts by zero on $V$.

Continuing, let $V,V^{\prime} \in \cat_{[ \lambda ]}$ be simple modules that are isomorphic as $\overline{U}^{[ \lambda ]}_q(\ospn)$-modules. We claim that $V \simeq V^{\prime} \otimes \sigma(z, \p 0)$ for some $z \in \Lambda_{\FR}$. To see this, let $v \in V$ and $v^{\prime} \in V^{\prime}$ be highest-weight vectors of weights $\lambda$ and $\lambda^{\prime}$, respectively. Without loss of generality, we may assume that the $\overline{U}^{[ \lambda ]}_q(\ospn)$-linear isomorphism $V \rightarrow V^{\prime}$ sends $v$ to $v^{\prime}$. Agreement of the actions of $K_{w_i}$ under this isomorphism requires $q^{\langle w_i, \lambda \rangle} = q^{\langle w_i, \lambda^{\prime} \rangle}$,
equivalently
\[
\lambda - \lambda^{\prime} \in \{\mu \in \Lambda_R \mid \langle w_i, \mu \rangle \in r \Z \mbox{ for all } \; 0 \leq i \leq n\}.
\]
Writing $\mu = \sum_{j=0}^n \mu_j \alpha_j \in \Lambda_R$ with $\mu_j \in \Z$, we have $\langle w_i, \mu \rangle = d_i \mu_i$. The conditions $\langle w_i, \mu \rangle \in r \Z$, $0 \leq i \leq n$, therefore imply $\langle \alpha_i, \mu \rangle = \sum_{j=0}^n \mu_j d_i a_{ij} \in r \Z$. From this and the proof of Lemma \ref{lem:alternativeZ}, we see that $\lambda - \lambda^{\prime} \in \Lambda_{\FR}$, whence $V \simeq V^{\prime} \otimes \sigma(\lambda - \lambda^{\prime}, \p 0)$.

Suppose now that $\mu \in \Cart^{\vee}$ is the highest weight of a simple $\overline{U}^{[\lambda ]}_q(\ospn)$-module $V$, so that $K_{w_i} v_0 = q^{\langle w_i, \mu \rangle} v_0$ for a highest-weight vector $v_0 \in V$. Note that $V$ depends only on the residue class of $\mu$ modulo $r \Lambda_R$. The module $V$ lifts to $\cat_{[\lambda]}$ if and only if $q_i^{r\mu_i} = q_i^{r\lambda_i}$ for all $0 \leq i \leq n$. There are exactly $r^{n+1}$ inequivalent choices of $\mu$ which satisfy these equations; denote by $\{V_j\}_{j \in J}$ the corresponding $\overline{U}^{[\lambda ]}_q(\ospn)$-modules. Since $[\lambda] \in \Gr \setminus \SSS$, each module $V_j$ lifts to a Kac module of dimension $D = r^{n^2}2^{2n}$. Proposition \ref{prop:density} ensures that the canonical map $\overline{U}^{[ \lambda ]}_q(\ospn) \rightarrow \bigoplus_{j \in J}\End_{\C}(V_j)$ is a surjection. A dimension count shows that the domain and codomain of this map each have dimension $r^{n+1} D^2 = r^{n + 1} r^{2n^2}2^{4n}$, whence the map is an isomorphism. By Wedderburn's Theorem, $\overline{U}^{[ \lambda ]}_q(\ospn)$ is a semisimple algebra. 

We can now complete the proof. Let $V \in \cat_{\left[ \lambda \right]}$ be non-zero. From the above discussion, $\dim_{\C} V = mD$, where $m \in \Z_{>0}$ is the dimension of the super vector space of highest-weight vectors of $V$ for the action of $\overline{U}^{[ \lambda ]}_q(\ospn)$. Fixing a homogeneous basis $v_1, \dots, v_m$ of this space, we obtain a $\overline{U}^{[ \lambda ]}_q(\ospn)$-linear isomorphism
\[
\bigoplus_{i=1}^m \overline{U}^{[ \lambda ]}_q(\ospn) \cdot v_i \rightarrow V.
\]
Since $\qhospnres v_i = \overline{U}^{[ \lambda ]}_q(\ospn) v_i$, this lifts to an isomorphism
\[
\bigoplus_{i=1}^m \qhospnres \cdot v_i \rightarrow V
\]
in $\cat$. Finally, $\qhospnres \cdot v_i$ is a simple $\qhospnres$-module since $[\lambda] \in \Gr \setminus \SSS$.
\end{proof}

\begin{Prop}
\label{prop:kacProjective}
If $\lambda \in \Cart^{\vee}$ is typical and $V_0(\lambda) = S_0(\lambda)$ is a simple Verma module, then $K(\lambda) \in \cat$ is projective and injective.
\end{Prop}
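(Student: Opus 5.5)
We prove projectivity first and then deduce injectivity by duality. The plan is to realize $K(\lambda)$ as a direct summand of a module that is manifestly projective in $\cat$, or equivalently to show that every surjection $M \twoheadrightarrow K(\lambda)$ in $\cat$ splits.

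\textbf{Step 1: lifting a highest-weight vector.} Let $p: M \to K(\lambda)$ be an epimorphism in $\cat$ and $v_0 \in K(\lambda)$ a highest-weight vector of weight $\lambda$. Since $H_0,\dots,H_n$ act semisimply on $M$ and $p$ is $\qhospnres$-linear, $p$ restricts to a surjection on $\lambda$-weight spaces, so we may choose an even $m \in M$ of weight $\lambda$ with $p(m) = v_0$. In general $m$ is not annihilated by the $E_i$.

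\textbf{Step 2: replacing $m$ by a highest-weight vector.} For this we use the key structure. Note that the condition ``$\lambda$ typical with $V_0(\lambda)$ simple'' is an open condition, so a naive genericity argument is insufficient: the class $[\lambda]$ may lie in $\SSS$ because of other weights in the same coset. We therefore argue directly, using the finite-dimensionality of $\overline{U}^{\ge 0}$-orbits. The subspace $N = \overline{U}^{>0}_q(\ospn)\, m$ is finite-dimensional and $\C[\Cart]$-stable, so it contains a nonzero vector $m'$ of maximal weight, which is necessarily annihilated by all $E_i$. The difficulty is ensuring that $p(m') \neq 0$, or more usefully that $p(m')$ generates $K(\lambda)$.

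For this we would exploit the central-type elements. On $K(\lambda)$, the operator $\Gamma^+_{\p 1}\Gamma^-_{\p 1}$ acts on $v_0$ by $c\chi(\lambda)\neq 0$. Similarly, on the simple Verma module $V_0(\lambda)$ the top element $\Gamma^+_{\p 0}\Gamma^-_{\p 0}$ acts on $v_0$ by a nonzero scalar. Nonvanishing of this scalar is precisely the De Concini--Kac simplicity criterion \cite[Prop. 17]{geer2013b}, expressed as a product of $\{\langle\lambda+\rho_{\p 0},\alpha\rangle - k\}_q$ over $\alpha \in \Delta^+_{\p 0}$ and $1\le k\le r-1$. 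Set $x = \Gamma^+\Gamma^-$ with $\Gamma^\pm = \Gamma^\pm_{\p 0}\Gamma^\pm_{\p 1}$, suitably ordered. Then $\Gamma^+ x' m$ for $x' = \Gamma^-$ lands in the $\lambda$-weight space, and $\Gamma^- m$ is a lowest-weight-type vector, so $\Gamma^+\Gamma^- m$ is killed by all $E_i$, because $E_i\Gamma^+ = 0$ in $\overline{U}^{>0}$ by maximality of $\Gamma^+$. Applying $p$ gives $p(\Gamma^+\Gamma^- m) = \Gamma^+\Gamma^- v_0 = c' v_0$ with $c' \neq 0$. Rescaling, we obtain $\tilde m \in M$ of weight $\lambda$ with $E_i \tilde m = 0$ for all $i$ and $p(\tilde m) = v_0$. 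This step, namely verifying $E_i\Gamma^+=0$ and computing the scalar as the product of the even and odd factors, is the main obstacle. We would handle it by factoring through $K(\lambda) \supset \Gamma^-_{\p 1} V_0(\lambda)$ and citing \cite[Lem. 9]{zhang1993} for the odd part and \cite{deconcini1990} for the even part.

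\textbf{Step 3: constructing the splitting.} The vector $\tilde m$ generates a $\overline{U}^H_q(\mathfrak p)$-submodule of $M$ which is a highest-weight $\overline{U}^H_q(\ospn_{\p 0})$-module of weight $\lambda$ killed by $E_0$, and hence a quotient of $V_0(\lambda)$. Since $V_0(\lambda)$ is simple and $p$ maps this submodule onto $V_0(\lambda)\subset K(\lambda)$, the submodule is isomorphic to $V_0(\lambda)$. By the universal property of induction (Definition \ref{def:kacModule}), we obtain a morphism $s: K(\lambda)\to M$ with $p\circ s(v_0)=v_0$. Since $K(\lambda)$ is simple by Proposition \ref{prop:kacTypical}, $p\circ s$ is a nonzero scalar multiple of the identity, and so $p$ splits. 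Hence $K(\lambda)$ is projective.

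\textbf{Step 4: injectivity.} By Lemma \ref{lem:dualKac}, $K(\lambda)^\vee \simeq K(-\lambda+\pi)$, and $-\lambda+\pi$ is again typical. Moreover, $V_0(-\lambda+\pi)$ is simple, since the even criterion transforms in the same way because $\pi - 2\rho_{\p 0} \cdot (\text{stuff}) \in 2r\rho_{\p 0} - 2\rho$ shifts even pairings by multiples of $r$. Thus $K(\lambda)^\vee$ is projective. Since duality is an exact anti-autoequivalence of the rigid category $\cat$, it exchanges projectives and injectives, and therefore $K(\lambda)\simeq K(\lambda)^{\vee\vee}$ is injective.
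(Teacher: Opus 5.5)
Your proposal is correct in substance, and it takes a genuinely different route from the paper.

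\textbf{Comparison with the paper.} The paper's proof is a one-line reduction. It asserts that the hypotheses force $[\lambda]\in\Gr\setminus\SSS$, and then reads off projectivity and injectivity from the semisimplicity of $\cat_{[\lambda]}$ (Theorem \ref{thm:genericSemiSimple}). You instead show directly that every epimorphism $p\colon M\to K(\lambda)$ splits:
\begin{enumerate}
\item lift $v_0$ to $m$;
\item replace $m$ by $\Gamma^+\Gamma^- m$, which is killed by every $E_i$ because $\Gamma^+$ spans the top weight space (weight $\pi$) of the subalgebra generated by the $E_i$;
\item apply the universal property of the Kac module.
\end{enumerate}
What your route buys is exactly the issue you flag: the hypotheses do \emph{not} imply $[\lambda]\notin\SSS$, because the coset $\lambda+\Lambda_R$ can contain atypical weights. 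For instance, $\lambda=(r-1)\rho_{\p 0}$ is typical with simple Verma module (Corollary \ref{cor:dimVrho}). But it lies in $\Lambda_R$, and $[0]\in\SSS$ because the unit $\unit\in\cat_{[0]}$ is one-dimensional. This is why the paper has to prove projectivity of $K((r-1)\rho_{\p 0})$ separately by a retract argument. Your argument handles every $\lambda$ satisfying the hypotheses uniformly. The paper's argument is shorter but, as written, is only justified when $[\lambda]\notin\SSS$.

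\textbf{Points to tighten.}
\begin{itemize}
\item \emph{Splitting implies projectivity.} Given an arbitrary epimorphism $A\to B$ and a map $K(\lambda)\to B$, pull the epimorphism back along $K(\lambda)\to B$. The pullback is an epimorphism onto $K(\lambda)$, so it splits, and the splitting gives the required lift.
\item \emph{Nonvanishing of $c'$.} No scalar computation is needed. The vector $v_-=\Gamma^-v_0\neq 0$ is killed by all $F_i$ for weight reasons. Simplicity of $K(\lambda)$ and the triangular decomposition then give $K(\lambda)=\overline{U}^{>0}_q(\ospn)\,v_-$. Since $K(\lambda)[\lambda]=\C v_0$ and $\Gamma^+$ spans the weight-$\pi$ part of $\overline{U}^{>0}_q(\ospn)$, the vector $\Gamma^+v_-$ is a nonzero multiple of $v_0$. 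Your factorization via Zhang's Lemma 9 and the even Verma criterion also works after reordering with Lemma \ref{lem:gammaCentral}. The remark that $\Gamma^-m$ is of lowest-weight type is not true in $M$ in general, and it is not needed.
\item \emph{Step 3.} $E_0$ annihilates the whole $\overline{U}^H_q(\ospn_{\p 0})$-submodule generated by $\tilde m$. That submodule is spanned by vectors $F_{j_1}\cdots F_{j_k}\tilde m$ with all $j_l\geq 1$, and $[E_0,F_j]=0$ for $j\geq 1$. Also, $p\circ s=\id$ holds simply because $K(\lambda)$ is generated by $v_0$.
\item \emph{Step 4.} Your justification that $V_0(-\lambda+\pi)$ is simple is garbled. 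The correct reason is as follows. $V_0(\lambda)^\vee$ is simple, with highest weight $-\lambda+2(r-1)\rho_{\p 0}$ and dimension $r^{n^2}$, so it equals $V_0(-\lambda+2(r-1)\rho_{\p 0})$. Moreover, $\pi-2(r-1)\rho_{\p 0}=2\rho_{\p 1}$ pairs to zero with $\alpha_1,\dots,\alpha_n$ by Lemma \ref{lem:rhoPairings}\eqref{ite:oddRhoSimp}. Hence this shift does not affect simplicity of even Verma modules.
\end{itemize}
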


\begin{proof}
Under the stated assumptions, $[\lambda] \in \Gr \setminus \SSS$. Hence, $K(\lambda)$ is an object of the semisimple category $\cat_{[\lambda]}$ (see Theorem \ref{thm:genericSemiSimple}) and so is projective and injective.
\end{proof}

\subsection{Ribbon structure}
\label{sec:ribCat}

We follow the method of \cite[\S 3.2]{anghel2021} to construct a braiding on the category $\cat$. Our starting point is Yamane's universal $R$-matrix for the $h$-adic quantum group $U_h(\ospn)$ \cite[Thm. 10.6.1]{yamane1994}.
Keeping the notation of Section \ref{sec:ospn}, let $(d_{ij})$ be the inverse of the matrix $(a_{ij} \slash d_j)$ and define 
\[
\mathcal{H}^h = \q^{-\sum_{i,j=0}^n d_{ij}H_i \otimes H_j}
\in
U_h(\ospn) \widehat{\otimes} U_h(\ospn),
\]
where $\q = e^{\frac{h}{2}}$ for a formal parameter $h$. Define the $\q$-exponential $\exp_{\q} (x)= \sum_{k = 0}^{\infty} \frac{x^k}{(k)_{\q}!}$, where $(k)_{\q} = \frac{1-\q^k}{1-\q}$. Then the universal $R$-matrix is $\mathcal{R}^h = \check{\mathcal{R}}^h\mathcal{H}^h$ with quasi-$R$-matrix
\[
\check{\mathcal{R}}^h = \prod_{\alpha \in \Delta^+} \exp_{\q_{\alpha}}\left( c_{\alpha}(\q) (\q-\q^{-1}) E_{\alpha} \otimes F_{\alpha} \right)
\in
U_h(\ospn) \widehat{\otimes} U_h(\ospn).
\]
Here $\q_{\alpha} = \epsilon(\alpha)\q^{-\langle \alpha,\alpha \rangle}$ and $c_{\alpha}(\q) = (-1)^i\q^k$ for some $\alpha$-dependent integers $i,k \in \Z$; see \cite[Lem. 10.3.1 and 10.6.1]{yamane1994}.

The Poincar\'{e}--Birkhoff--Witt topological basis of $U_h(\ospn)$ is
\[
\mathcal{B}
=
\left\{\
\prod_{i=0}^n H_i^{z_i} \prod_{\alpha \in \Delta^+} E_{\alpha}^{x_{\alpha}} \prod_{\alpha \in \Delta^+} F_{\alpha}^{y_{\alpha}}
\mid z_i, x_{\alpha}, y_{\alpha} \in \Z_{\geq 0}, \, x_{\alpha}, y_{\alpha} \leq 1 \mbox{ if } \alpha \in \Delta^+_{\p 1} \right\}.
\]
Let $\mathcal{B}^< \subset \mathcal{B}$ be the subset of monomials with $x_{\alpha}, y_{\alpha} \leq r-1$ for all $\alpha \in \Delta^+_{\p 0}$. There is a vector space decomposition $U_h(\ospn) = U_h(\ospn)^< \oplus I$, where $U_h(\ospn)^<$ and $I$ are the closures in $U_h(\ospn)$ of $\Span_{\C}(\mathcal{B}^<)$ and $\Span_{\C}(\mathcal{B} \slash \mathcal{B}^<)$, respectively. Let $\rho: U_h(\ospn) \rightarrow U_h(\ospn)^<$ be the projection. Define the \emph{truncated $h$-adic $R$-matrix} $\mathcal{R}^{<h} = (\rho \otimes \rho)\mathcal{R}^h$.

\begin{Lem}
\label{lem:truncRMatrixIden}
The truncated $h$-adic $R$-matrix satisfies the following identities:
\begin{enumerate}
\item $(\rho \otimes \rho \otimes \rho)(\Delta \otimes \id)\mathcal{R}^{< h} = (\rho \otimes \rho \otimes \rho)(\mathcal{R}_{13}^{<h}\mathcal{R}_{23}^{<h})$.

\item $(\rho \otimes \rho \otimes \rho)(\id \otimes \Delta)\mathcal{R}^{< h} = (\rho \otimes \rho \otimes \rho)(\mathcal{R}_{13}^{<h}\mathcal{R}_{12}^{<h})$.

\item $(\rho \otimes \rho) \mathcal{R}^{< h} (\Delta^{\op}(x)) = (\rho \otimes \rho)(\Delta(x))\mathcal{R}^{< h}$. 
\end{enumerate}
\end{Lem}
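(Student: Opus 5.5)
The plan is to deduce each identity from the corresponding identity for the untruncated $h$-adic $R$-matrix, following \cite[\S 3.2]{anghel2021}. The key point is that the complement $I$ (the closure of the span of $\mathcal{B} \setminus \mathcal{B}^<$) is a two-sided ideal of $U_h(\ospn)$ which is also a coideal.

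Yamane's theorem \cite[Thm. 10.6.1]{yamane1994} gives the quasitriangularity identities
\[
(\Delta \otimes \id)\mathcal{R}^h = \mathcal{R}^h_{13}\mathcal{R}^h_{23},
\qquad
(\id \otimes \Delta)\mathcal{R}^h = \mathcal{R}^h_{13}\mathcal{R}^h_{12},
\qquad
\mathcal{R}^h \Delta^{\op}(x) = \Delta(x)\mathcal{R}^h .
\]
We apply $\rho^{\otimes 3}$ (respectively $\rho^{\otimes 2}$) to these. It then suffices to establish two compatibilities:
\begin{enumerate}
\item[(a)] $\rho^{\otimes 2}\circ \Delta = \rho^{\otimes 2} \circ \Delta \circ \rho$;
\item[(b)] $\rho^{\otimes k}(ab) = \rho^{\otimes k}(\rho^{\otimes k}(a)\,\rho^{\otimes k}(b))$ in $U_h(\ospn)^{\widehat{\otimes} k}$.
\end{enumerate}
Property (b) says that $\ker \rho^{\otimes k}$ is a two-sided ideal; (a) says that $\Delta(I) \subset I \widehat{\otimes} U_h + U_h \widehat{\otimes} I$. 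With these in hand, each identity follows formally. For instance, $\rho^{\otimes 3}(\Delta\otimes\id)\mathcal{R}^{<h} = \rho^{\otimes 3}(\Delta \otimes \id)\mathcal{R}^h = \rho^{\otimes 3}(\mathcal{R}^h_{13}\mathcal{R}^h_{23}) = \rho^{\otimes 3}(\mathcal{R}^{<h}_{13}\mathcal{R}^{<h}_{23})$, using (a) in the first step and (b) in the last. The third identity works the same way, applying (b) to both products and noting that $x$ may be replaced by any element without changing the projected values.

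To prove that $I$ is a two-sided Hopf ideal, I would identify $I$ with the closed two-sided ideal generated by $E_\alpha^r, F_\alpha^r$ for $\alpha \in \Delta^+_{\p 0}$. This is the $h$-adic analogue of the ideal defining $\qhospnres$, which we already know to be a Hopf ideal by \cite[\S 5.6]{deconcini1992}.

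The argument has three parts.
\begin{itemize}
\item \emph{Commutation of even root vectors.} Using the ordered PBW basis and $q$-commutation relations, pushing an $r$th power $E_\alpha^r$ past other root vectors produces only terms still lying in the span of monomials with some even exponent $\geq r$. The same holds for odd root vectors, since odd ones square to zero. The reason is that, at $q$ a primitive $r$th root of unity with $r$ odd, $E_\alpha^r$ $q$-commutes with the other root vectors up to terms in the ideal; this is the De Concini--Kac argument transported to the super setting.
\item \emph{The two descriptions of $I$ agree.} It follows that the span of non-restricted monomials equals the ideal generated by the $r$th powers.
\item \emph{Coideal property.} The $q$-binomial formula for $\Delta(E_\alpha^r)$ shows that $\Delta(E_\alpha^r) \in E_\alpha^r\otimes(\cdot) + (\cdot)\otimes E_\alpha^r + (\text{ideal})$, because $\qbin{r}{k}_{q_\alpha}=0$ for $0<k<r$.
\end{itemize}

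One caveat is that in the $h$-adic setting $q$ is formal, so "vanishing" must be interpreted after specialization. Following \cite{anghel2021}, I would therefore phrase everything in terms of the specialized action on weight modules. In that setting only the images in $\End(V\otimes W)$ matter, and the needed vanishing holds.

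The main obstacle is the first part: showing that the PBW-complement $I$ is genuinely an ideal for the super root vectors of Yamane, including root vectors built from odd ones. I would handle this by reducing to rank-two subdiagrams, and I would invoke Lemma \ref{lem:gammaCentral}-style commutation results from \cite{zhang1993} for the odd–even interactions.
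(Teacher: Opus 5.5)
\textbf{There is a genuine gap: your claims (a) and (b) are false in $U_h(\ospn)$.} You start, as the paper does, from Yamane's identities for $\mathcal{R}^h$. But the bridge you build to the truncated identities needs $I$ to be an ideal and a coideal, and it is neither, because $\q=e^{h/2}$ is not a root of unity. For an even simple root $\alpha_i$ one has
\[
E_i^rF_i-F_iE_i^r=[r]_{\q_i}\,E_i^{r-1}\,\frac{\q_i^{r-1}K_i-\q_i^{1-r}K_i^{-1}}{\q_i-\q_i^{-1}},\qquad [r]_{\q_i}=r+O(h^2).
\]
So $\rho(F_iE_i^r)\neq 0=\rho(\rho(F_i)\rho(E_i^r))$, which contradicts (b) already for $k=1$. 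Similarly, for $0<k<r$ the terms $E_i^kK_i^{r-k}\otimes E_i^{r-k}$ of $\Delta(E_i^r)$ are restricted in both legs and carry coefficients $\q_i^{k(r-k)}\qbin{r}{k}_{\q_i}\neq 0$. Hence $\rho^{\otimes 2}\Delta(E_i^r)\neq 0$ while $\rho(E_i^r)=0$, which contradicts (a). The same formula shows that the closed ideal generated by the $E_\alpha^r,F_\alpha^r$ contains non-zero restricted elements, so it is not $I$.

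The vanishing you want to import ($\qbin{r}{k}_q=0$, skew-centrality of $E_\alpha^r$) exists only at a root of unity. It cannot be transported to $U_h$. Retreating to the specialized action on weight modules does not help either. The lemma is an identity in $U_h(\ospn)^{\widehat{\otimes}k}$ for the $h$-adic projection $\rho$. At $q$ itself there is no untruncated $R$-matrix to start from, since $(k)_{q_\alpha}!=0$ for $k\geq r$. Passing to $q$ also needs an integral form you do not set up, and that passage is the content of Lemma~\ref{lem:modRMatrixIden}, not of this one.

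\textbf{The missing idea: no ideal or coideal property is needed, only the shape of $\mathcal{R}^h$.} Expand the ordered product of $\q$-exponentials, choosing the PBW orders to match it. Each term of $\check{\mathcal{R}}^h$ is then a multiple of $E_{\beta_1}^{x_1}\cdots E_{\beta_N}^{x_N}\otimes F_{\beta_1}^{x_1}\cdots F_{\beta_N}^{x_N}$, with the same exponents on both legs. Multiplying on either side by series in the $H_i$ (such as $\mathcal{H}^h$ or $K_i^{\pm 1}$) does not change PBW exponents. Hence $\mathcal{R}^{<h}$ is exactly the sum of the restricted terms, and $\mathcal{R}^h-\mathcal{R}^{<h}\in I\widehat{\otimes}I$.

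\begin{itemize}
\item For (1): $\Delta\otimes\id$ does not touch the second leg, which lies in $I$, so $\rho^{\otimes 3}(\Delta\otimes\id)(\mathcal{R}^h-\mathcal{R}^{<h})=0$. In $\mathcal{R}_{13}\mathcal{R}_{23}=\sum\pm a_i\otimes a_j\otimes b_ib_j$ the first two legs are never multiplied. So replacing $\mathcal{R}^h$ by $\mathcal{R}^{<h}$ only discards terms with $a_i\in I$ or $a_j\in I$, and $\rho^{\otimes 3}$ kills these.
\item For (2): this is the mirror argument, using $\mathcal{R}_{13}\mathcal{R}_{12}=\sum\pm a_ia_j\otimes b_j\otimes b_i$.
\item For (3): take $x$ a generator, which is all that Lemma~\ref{lem:modRMatrixIden} uses. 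Then $\Delta(x)$ and $\Delta^{\op}(x)$ are sums of terms (generator)$\otimes$(Cartan) and (Cartan)$\otimes$(generator). So every term of $\mathcal{R}^h-\mathcal{R}^{<h}$, multiplied on either side by these, keeps a leg in $I$.
\end{itemize}

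Root-of-unity facts such as the De Concini--Kac Hopf ideal enter only afterwards, when specializing to $\mathcal{R}^q$.
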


\begin{proof}
These identities follow from the corresponding identities for the universal $R$-matrix $\mathcal{R}^h$, as in the proof of \cite[Lem. 3.9]{anghel2021}.
\end{proof}

Returning to the case in which $q$ is a root of unity, consider the operator $\mathcal{H}^q = q^{-\sum_{i,j = 0}^n d_{ij} H_i \otimes H_j}$. Concretely, if $V, W \in \cat$ with $v \in V$ and $w \in W$ of weight $\lambda$ and $\mu$, respectively, then
\begin{equation}
\label{eq:HAction}
\mathcal{H}^q(v \otimes w)
=
q^{-\sum_{i,j=0}^n d_{ij} \lambda(H_i) \mu(H_j)} v \otimes w
=
q^{- \langle \lambda, \mu \rangle}v \otimes w.
\end{equation}

\begin{Lem}
\label{lem:HIden}
The following equalities hold as operators on $\cat$.
\begin{enumerate}
\item \label{ite:commuteHK} $\mathcal{H}^q(x \otimes y) = q^{- \langle \alpha,\beta \rangle}(xK^{-1}_{\beta} \otimes yK^{-1}_{\alpha})\mathcal{H}^q$ for $x,y  \in \qhospnres$ of weight $\alpha,\beta \in \Cart^{\vee}$, respectively. 

\item \label{ite:firstBraidEqn} $(\Delta \otimes \id)\mathcal{H}^q = \mathcal{H}^q_{13}\mathcal{H}^q_{23}$. 

\item \label{ite:secondBraidEqn} $(\id \otimes \Delta)\mathcal{H}^q = \mathcal{H}^q_{13} \mathcal{H}^q_{12}$.
\end{enumerate}
\end{Lem}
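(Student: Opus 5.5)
All three identities are checked by evaluating both sides on homogeneous weight vectors of modules in $\cat$, using the explicit formula \eqref{eq:HAction}. The two inputs are that $\mathcal{H}^q$ acts diagonally on weight vectors through the bilinear form $\langle -,- \rangle$, and that the coproduct is additive on weights.

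For \eqref{ite:commuteHK}, take $V,W \in \cat$ and $v \in V$, $w \in W$ of weights $\lambda,\mu$. Since $x$ has weight $\alpha$, the vector $xv$ has weight $\lambda+\alpha$; likewise $yw$ has weight $\mu+\beta$. Up to the Koszul sign $(-1)^{\p y \p v}$ from applying $x \otimes y$, which appears identically on both sides, the left-hand side is
\[
q^{-\langle \lambda+\alpha,\mu+\beta\rangle}\, xv \otimes yw .
\]
For the right-hand side, $\mathcal{H}^q$ first contributes $q^{-\langle\lambda,\mu\rangle}$. Then $K_\beta^{-1}v = q^{-\langle \lambda,\beta\rangle}v$ and $K_\alpha^{-1}w = q^{-\langle \mu,\alpha\rangle}w$, and the prefactor contributes $q^{-\langle\alpha,\beta\rangle}$. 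Expanding the bilinear form $\langle \lambda+\alpha,\mu+\beta\rangle$ and using its symmetry shows the two exponents agree.

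One point needs care. The element $K_\beta$ is only defined for $\beta \in \Lambda_R$, which holds because the weights of elements of $\qhospnres$ lie in the root lattice. Also, $K_\beta$ acts on weight modules by $q^{\langle\lambda,\beta\rangle}$ precisely by the definition of a weight module.

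For \eqref{ite:firstBraidEqn}, I would not expand $\Delta$ on the exponent formally. Instead I would interpret $(\Delta\otimes\id)\mathcal{H}^q$ as the operator on $U \otimes V \otimes W$ obtained by viewing $U\otimes V$ as a single module, so that it acts on $u\otimes v\otimes w$ of weights $\lambda,\mu,\nu$ by $q^{-\langle \lambda+\mu,\nu\rangle}$. Here I use $\Delta(H_i)=H_i\otimes 1+1\otimes H_i$: the tensor product $u\otimes v$ has weight $\lambda+\mu$, so the operator is well defined. Bilinearity gives
\[
q^{-\langle\lambda+\mu,\nu\rangle}=q^{-\langle\lambda,\nu\rangle}q^{-\langle\mu,\nu\rangle},
\]
which is the action of $\mathcal{H}^q_{13}\mathcal{H}^q_{23}$. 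This step also needs two observations: the even diagonal operators introduce no super signs, and $\mathcal{H}^q_{13}$ acts by the scalar $q^{-\langle \lambda,\nu\rangle}$ regardless of the middle factor. Part \eqref{ite:secondBraidEqn} follows in the same way from linearity in the second argument.

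The main obstacle is conceptual rather than computational: making precise the sense in which $\mathcal{H}^q$, which is not an element of $\qhospnres\otimes\qhospnres$, is an operator on $\cat$. I would define it as the natural family of operators on $V\otimes W$, indexed by pairs of objects, given by \eqref{eq:HAction}; naturality holds because morphisms preserve weights. With this definition, $(\Delta\otimes \id)$ applied to it means evaluation on $(U\otimes V)\otimes W$, and all three identities reduce to the scalar comparisons above.
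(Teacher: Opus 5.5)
Your proposal is correct and follows essentially the same route as the paper: you evaluate both sides on tensor products of weight vectors using \eqref{eq:HAction}, the action $K_\beta v = q^{\langle \lambda,\beta\rangle}v$ on weight vectors, bilinearity and symmetry of $\langle -,-\rangle$, and additivity of weights under $\Delta(H_i)=H_i\otimes 1+1\otimes H_i$. The only differences are that you are more careful about Koszul signs and about what $\mathcal{H}^q$ means as an operator on $\cat$, whereas the paper expands $(\Delta\otimes\id)\mathcal{H}^q$ as a formal exponential series before evaluating it.
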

\begin{proof}
Let $v \in V$ and $w \in W$ of weight $\lambda$ and $\mu$, respectively. Equation \eqref{eq:HAction} gives
$\mathcal{H}^q(x \otimes y)(v \otimes w) = q^{- \langle \alpha + \lambda ,\beta + \mu \rangle} xv \otimes yw$. On the other hand, we compute
\begin{eqnarray*}
q^{-\langle \alpha,\beta\rangle}(xK^{-1}_{\beta} \otimes y K^{-1}_{\alpha})\mathcal{H}^q(v \otimes w)
&=& q^{- \langle \alpha,\beta\rangle - \langle \lambda , \mu \rangle}(x K^{-1}_{\beta} \otimes y K^{-1}_{\alpha})(v \otimes w)\\
&=& q^{- \langle \alpha + \lambda ,\beta + \mu \rangle} xv \otimes yw.
\end{eqnarray*}
    
To prove the second identity, we compute
\begin{eqnarray*}
(\Delta \otimes \id)\mathcal{H}^q
&=&
\sum_{k=0}^{\infty} \frac{1}{k!} \left(- \frac{2 \pi i}{r}\right)^k \left( \sum_{i,j=0}^n d_{ij} (H_i \otimes 1 + 1 \otimes H_i) \otimes H_j \right)^k. \\
\end{eqnarray*}
Applying this operator to $u \otimes v \otimes w$, where $u$, $v$ and $w$ have weight $\lambda$, $\mu$ and $\nu$, respectively, gives 
\[
\sum_{k=0}^{\infty} \frac{1}{k!} \left(- \frac{2 \pi i}{r}\right)^k \left( \sum_{i,j=0}^n d_{ij} (\lambda(H_i) + \mu(H_i)) \nu(H_i) \right)^k u \otimes v \otimes w
=
q^{- \langle \lambda +\mu, \nu \rangle} u \otimes v \otimes w.
\]
On the other hand, we have
\[
\mathcal{H}^q_{13}\mathcal{H}^q_{23}(u \otimes v \otimes w)
=
q^{- \langle \lambda, \nu \rangle} \mathcal{H}^q_{13} (u \otimes v \otimes w)
=
q^{- \langle \lambda, \nu \rangle -\langle \mu,\nu \rangle} u \otimes v \otimes w.
\]
This proves the second identity. The third identity is proved in the same way.
\end{proof}

The truncated $q$-exponential is $\exp_{q}^<(x)= \sum_{k = 0}^{r - 1}\frac{x^k}{(k)_q!}$. Define $\mathcal{R}^q = \check{\mathcal{R}}^q \circ \mathcal{H}^q$, where
\[
\check{\mathcal{R}}^q = \prod_{\alpha \in \Delta^+} \exp^<_{q_{\alpha}}\left( c_{\alpha}(q) (q-q^{-1}) E_{\alpha} \otimes F_{\alpha} \right) \in \qhospnres \otimes \qhospnres
\]
is seen as an operator by left multiplication.

\begin{Lem}
\label{lem:modRMatrixIden}
The following equalities hold as operators on $\cat$.
\begin{enumerate}
\item $(\Delta \otimes \id) \mathcal{R}^q = \mathcal{R}^q_{13}\mathcal{R}^q_{23}$,
\item $(\id \otimes \Delta) \mathcal{R}^q = \mathcal{R}^q_{13}\mathcal{R}^q_{12}$,
 \item $\mathcal{R}^q\Delta(x) = \Delta^{\op}(x)\mathcal{R}^q$ for any $x \in \qhospnres$.
\end{enumerate}
\end{Lem}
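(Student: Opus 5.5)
We follow the strategy of \cite[Lem. 3.10]{anghel2021}: transfer the identities of Lemma \ref{lem:truncRMatrixIden} for the truncated $h$-adic $R$-matrix to the root-of-unity setting, treating the Cartan part $\mathcal{H}^q$ separately through Lemma \ref{lem:HIden}.

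\textbf{Step 1: reduce to the quasi-$R$-matrix.} Using Lemma \ref{lem:HIden}\eqref{ite:commuteHK}, move $\mathcal{H}^q$ past root vectors. For (1), write $(\Delta \otimes \id)\mathcal{R}^q = (\Delta\otimes\id)(\check{\mathcal{R}}^q)\,(\Delta\otimes\id)\mathcal{H}^q$. By Lemma \ref{lem:HIden}\eqref{ite:firstBraidEqn} the second factor is $\mathcal{H}^q_{13}\mathcal{H}^q_{23}$. Conjugating $\check{\mathcal{R}}^q_{23}$ past $\mathcal{H}^q_{13}$ introduces Cartan factors $K^{\pm 1}_\alpha$ in the first tensor slot. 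Hence (1) becomes
\[
(\Delta\otimes\id)\check{\mathcal{R}}^q=\check{\mathcal{R}}^q_{13}\,(\mathcal{H}^q_{13}\check{\mathcal{R}}^q_{23}(\mathcal{H}^q_{13})^{-1}),
\]
an identity among elements of $\qhospnres^{\otimes 3}$ (with $K$'s). The same manipulation reduces (2), and (3) is checked on generators $x$. For $x = H_i, K_i$ it holds by weight considerations, since $\check{\mathcal{R}}^q$ has total weight zero and $\mathcal{H}^q$ commutes with $\Delta(H_i)$. For $x=E_i,F_i$, the commutation of $\mathcal{H}^q$ with $\Delta(E_i)$ versus $\Delta^{\op}(E_i)$ produces exactly the $K_i$ twists appearing in the $h$-adic identity $\check{\mathcal{R}}^h\,\mathcal{H}^h\Delta(x)(\mathcal{H}^h)^{-1}=\Delta^{\op}(x)\check{\mathcal{R}}^h$.

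\textbf{Step 2: specialize.} The reduced identities are the $q$-analogues of identities that hold for $\check{\mathcal{R}}^h$ in $U_h(\ospn)$, where $\mathcal{H}^h$-conjugation likewise yields $\q^{\pm H}$ factors, i.e.\ $K$'s. By Lemma \ref{lem:truncRMatrixIden}, applying $\rho^{\otimes k}$ truncates these identities compatibly. We then work in a form over $\C[\q^{\pm1}]$, or via the De Concini--Kac integral form, in which all coefficients are Laurent polynomials. There the truncated identities involve only monomials in $\mathcal{B}^<$ together with Cartan elements, so we may specialize $\q\mapsto q$. In the quotient $\qhospnres$ the monomials outside $\mathcal{B}^<$ vanish: $E_\alpha^r=F_\alpha^r=0$ for even $\alpha$, and odd root vectors square to zero. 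Consequently the image of $\rho^{\otimes k}$ applied to the product agrees with the honest product in $\qhospnres^{\otimes k}$. Finally, $\exp^<_{q_\alpha}$ is the specialization of the truncated $\exp_{\q_\alpha}$. Its denominators $(k)_{q_\alpha}!$, for $k\le r-1$, are nonzero because $r$ is odd and $q_\alpha^{\pm1}$ is a primitive $r$-th root of unity up to sign (the sign $\epsilon(\alpha)$ arises only for odd $\alpha$, where $k\le1$).

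\textbf{Main obstacle.} The delicate point is Step 2: justifying that $\rho^{\otimes k}$ of a product of truncated factors coincides with the product computed in the restricted algebra. This requires the span of $\mathcal{B}\setminus\mathcal{B}^<$ to be an ideal after specialization. To establish this, we would invoke the result of \cite[\S 5.6]{deconcini1992} that $I$ is a Hopf ideal, together with the super PBW theorem at roots of unity, and check that the specialization commutes with the quotient.
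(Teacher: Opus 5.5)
Your proposal is correct and follows essentially the same route as the paper. The paper's proof simply observes that $\check{\mathcal{R}}^q$ is the specialization $\q \mapsto q$ of the truncated quasi-$R$-matrix $(\rho\otimes\rho)\check{\mathcal{R}}^h$, and then combines Lemmas \ref{lem:truncRMatrixIden} and \ref{lem:HIden} as in \cite[Lem. 3.15]{anghel2021}; this is exactly your Steps 1 and 2. As a minor simplification of your ``main obstacle'', you only need $\Span(\mathcal{B}\setminus\mathcal{B}^<)$ to lie in $I$ after specialization, not to be an ideal itself, and this containment is automatic because every such monomial contains a factor $E_\alpha^r$ or $F_\alpha^r$ with $\alpha\in\Delta^+_{\p 0}$.
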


\begin{proof}
After noting that $\check{\mathcal{R}}^q$ is the specialization $\q \mapsto q$ of the truncated $h$-adic quasi-$R$-matrix $\check{\mathcal{R}}^{< h} = (\rho \otimes \rho)\check{\mathcal{R}}^h$, this follows from Lemmas \ref{lem:truncRMatrixIden} and \ref{lem:HIden}, as in the proof of \cite[Lem. 3.15]{anghel2021}.
\end{proof}

Lemma \ref{lem:modRMatrixIden} immediately leads to the following result.

\begin{Cor}
\label{cor:braidedCat}
A braiding on the category $\cat$ is defined by the maps
\[
c_{V,W}: V \otimes W \rightarrow W \otimes V,
\qquad
V,W \in \cat
\]
where $c_{V,W} = \tau \circ \mathcal{R}^q$ and $\tau(v \otimes w) = (-1)^{\p{v} \p{w}} w \otimes v$ is the swap map with Koszul signs. 
\end{Cor}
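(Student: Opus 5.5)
To prove Corollary \ref{cor:braidedCat}, I would check three things: each $c_{V,W}$ is a well-defined morphism in $\cat$, each is invertible, and the family is natural and satisfies the two hexagon identities. The main input is Lemma \ref{lem:modRMatrixIden}, with the only extra work being the bookkeeping of Koszul signs in the swap $\tau$.

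\emph{Well-definedness and linearity.} On weight modules, the operator $\mathcal{H}^q$ acts by the scalar $q^{-\langle \lambda,\mu\rangle}$ on $V_\lambda \otimes W_\mu$, by equation \eqref{eq:HAction}. The truncated exponentials in $\check{\mathcal{R}}^q$ are finite sums. Hence $\mathcal{R}^q$ is a well-defined even operator on $V\otimes W$ for all $V,W\in\cat$. It is natural in $V$ and $W$, because $\qhospnres$-linear maps preserve weight spaces and commute with the action of $E_\alpha\otimes F_\alpha$.

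By Lemma \ref{lem:modRMatrixIden}(3), $\mathcal{R}^q$ intertwines $\Delta$ with $\Delta^{\op}$, where $\Delta^{\op}$ is the super opposite, i.e. it includes the Koszul sign. Composing with the super swap $\tau$, which intertwines $\Delta^{\op}$ on $V\otimes W$ with $\Delta$ on $W\otimes V$, shows that $c_{V,W}$ is $\qhospnres$-linear. The map $\tau$ is even, so $c_{V,W}$ is also parity preserving.

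\emph{Invertibility.} The factor $\mathcal{H}^q$ is invertible, since it acts by nonzero scalars on weight spaces. Each truncated exponential factor has the form $1+N$, where $N$ is nilpotent on any finite-dimensional weight module: $E_\alpha\otimes F_\alpha$ raises the first weight by $\alpha$, and the weights of $V$ form a finite set. Hence $\check{\mathcal{R}}^q$ is invertible, and so is $c_{V,W}$.

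\emph{Hexagon identities.} For $c_{U\otimes V,W}=(c_{U,W}\otimes\id_V)\circ(\id_U\otimes c_{V,W})$, I would apply Lemma \ref{lem:modRMatrixIden}(1). This gives $\mathcal{R}^q$ acting on $(U\otimes V)\otimes W$ as $\mathcal{R}^q_{13}\mathcal{R}^q_{23}$. Then I would rewrite $\tau_{U\otimes V,W}\circ\mathcal{R}^q_{13}\mathcal{R}^q_{23}$ as $(\tau_{U,W}\otimes\id)\mathcal{R}^q_{12}(\id\otimes\tau_{V,W})\mathcal{R}^q_{23}$. The identity $\tau_{23}\mathcal{R}^q_{13}=\mathcal{R}^q_{12}\tau_{23}$ holds with signs, because $\mathcal{R}^q$ is even. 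The other hexagon follows in the same way from part (2).

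I expect the sign verification to be the only delicate step, and it reduces to the standard fact that $\tau$ is a symmetric braiding on super vector spaces compatible with even operators. Finally, the category $\cat$ is also rigid and pivotal (Lemma \ref{lem:pivotElement} and the displayed duality maps), so this gives a braided pivotal structure, which is exactly what the corollary asserts.
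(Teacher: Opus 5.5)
Your proposal is correct and follows the paper's route: the paper derives the corollary directly from Lemma \ref{lem:modRMatrixIden}, using part (3) for $\qhospnres$-linearity and parts (1) and (2) for the hexagon identities. You have simply written out the verification the paper leaves implicit, namely naturality, invertibility via nilpotence of $E_\alpha \otimes F_\alpha$ on finite-dimensional weight modules, and the Koszul-sign bookkeeping for $\tau$.
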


Now that $\cat$ is endowed with a pivotal structure and a braiding, we can define a candidate ribbon structure $\theta$ on $\cat$ by
\[
\theta_V = (\id_V \otimes \ev_V) \circ (c_{V,V} \otimes \id_{V^{\vee}}) \circ (\id_V \otimes \tcoev_V),
\qquad
V \in \cat.
\]

\begin{Prop}
\label{prop:ribCat}
Let $V \in \cat$ be a highest-weight module with highest weight $\lambda$. Then $\theta_V = q^{-\langle \lambda - \pi, \lambda \rangle} \id_V$. Moreover, if $[\lambda] \in \Gr \setminus \SSS$, then $\theta_{K(\lambda)^{\vee}} = \theta^{\vee}_{K(\lambda)}$.
\end{Prop}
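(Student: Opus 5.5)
The plan is to compute $\theta_V$ on a highest-weight vector and then propagate to all of $V$. First, $\theta_V$ is natural with respect to morphisms and commutes with the action of $\qhospnres$; I will check this directly from the construction, using Lemma \ref{lem:modRMatrixIden} and the fact that the pivotal structure is implemented by $K_\pi$. A highest-weight module $V$ is generated by a highest-weight vector $v_0$ of weight $\lambda$, so it suffices to prove
\[
\theta_V(v_0)=q^{-\langle\lambda-\pi,\lambda\rangle}v_0 .
\]

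To compute this, I unwind $\theta_V=(\id_V\otimes\ev_V)\circ(c_{V,V}\otimes\id_{V^\vee})\circ(\id_V\otimes\tcoev_V)$ on $v_0$ using a homogeneous weight basis $\{v_i\}$. This gives
\[
\theta_V(v_0)=\sum_i(\id\otimes\ev_V)\bigl(\tau\mathcal{R}^q(v_0\otimes v_i)\otimes v_i^\vee\bigr).
\]
I would rather use the standard Drinfeld-element description, $\theta_V(v)=\sum (-1)^{\cdots}S(b)K_\pi^{\pm1}\,\cdots\, a\, v$ with $\mathcal{R}^q=\sum a\otimes b$, but I expect it is simplest to work directly with the formula above.

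Write $\mathcal{R}^q=\check{\mathcal R}^q\mathcal H^q$. The product $\check{\mathcal{R}}^q$ equals $1\otimes 1$ plus terms whose first tensor factor is a product of positive root vectors $E_\alpha$. The $\tau$ places $\check{\mathcal{R}}^q$'s first-factor $E$'s on the second slot, i.e.\ on $v_i$ paired with $v_i^\vee$ via $\ev$, and its second-factor $F$'s on $v_0$. Weight matching in the pairing $v_i^\vee(K_\pi E\cdots v_i)$ forces these to contribute only through a term that must land back in the weight-$\lambda$ component containing $v_0$. The $F$'s act on $v_0$ and lower its weight, so after the final contraction only the $1\otimes1$ term survives in the $v_0$-coefficient. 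The remaining contributions are the diagonal term $i=0$ with $\mathcal H^q$, which gives $q^{-\langle\lambda,\lambda\rangle}$, and the $K_\pi$ in $\ev_V$, which gives $q^{\langle\lambda,\pi\rangle}$. Combining these yields $q^{-\langle\lambda-\pi,\lambda\rangle}$. Bookkeeping of Koszul signs from $\tau$ and $\ev_V$ shows they cancel, since $(-1)^{\p v_0\p v_0}(-1)^{\p v_0\p v_0}=1$. This Koszul-sign and weight bookkeeping, in particular confirming that no non-identity term of $\check{\mathcal{R}}^q$ contributes to the $v_0$-component, is the step I expect to be the main obstacle. As a sanity check, I would redo the computation via the Drinfeld element $u=\sum S(b)a$ acting on $v_0$, where $\mathcal{H}^q$ contributes $q^{-\langle\lambda,\lambda\rangle}$ and $K_\pi^{-1}$ or $K_\pi$ contributes the $\pi$ term.

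For the second assertion, take $[\lambda]\in\Gr\setminus\SSS$, so that $\lambda$ is typical and $K(\lambda)$ is simple (Theorem \ref{thm:genericSemiSimple}). I will also need $S_0(\lambda)=V_0(\lambda)$, so that Lemma \ref{lem:dualKac} gives $K(\lambda)^\vee\simeq K(-\lambda+\pi)$; this follows from the definition of $\SSS$, since the Kac module has dimension $r^{n^2}2^{2n}$ only when $S_0(\lambda)$ is the full Verma module. By the first part,
\[
\theta_{K(\lambda)^\vee}=q^{-\langle -\lambda+\pi-\pi,\,-\lambda+\pi\rangle}\id=q^{-\langle\lambda,\lambda-\pi\rangle}\id .
\]
Since $\theta^\vee_{K(\lambda)}$ is the dual of a scalar map, it is the same scalar $q^{-\langle\lambda-\pi,\lambda\rangle}$. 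By symmetry of $\langle-,-\rangle$ the two scalars agree.
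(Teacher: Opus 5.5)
Your overall strategy matches the paper's: evaluate the defining composite for $\theta_V$ on a highest-weight vector $v_0$, extend to all of $V$ because $V$ is generated by $v_0$ (the paper phrases this as $\End_{\cat}(V)\simeq\C\cdot\id_V$), and deduce the second claim from the first together with Lemma \ref{lem:dualKac}. The second part is fine. Your observation that $[\lambda]\in\Gr\setminus\SSS$ forces $S_0(\lambda)=V_0(\lambda)$ is a useful addition, since Lemma \ref{lem:dualKac} needs it and the paper leaves it implicit.

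The key computational step, however, is argued with the tensor factors reversed, and as written it does not give your answer. You have $c_{V,V}=\tau\circ\mathcal{R}^q$, so the $R$-matrix acts on $v_0\otimes v_i$ \emph{before} the swap. Hence the $E$-monomials of $\check{\mathcal{R}}^q$ act on $v_0$ and the $F$-monomials act on $v_i$, not the other way round.

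Now follow your own description, with $F$'s on $v_0$ in the output slot and $E$'s on $v_i$ contracted against $v_i^\vee$. The non-identity terms do die by weight. But the identity term then contributes $\ev_V(v_i\otimes v_i^\vee)$ for \emph{every} $i$. This gives a supertrace-type sum $\sum_i \pm q^{-\langle\lambda-\pi,\lambda_i\rangle}v_0$ over all weights of $V$, not a single diagonal term. Your restriction to $i=0$ is therefore asserted, not derived.

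The correct bookkeeping is simpler. Every nontrivial $E$-monomial annihilates $v_0$ because $v_0$ is a highest-weight vector. So $c_{V,V}(v_0\otimes v_i)=(-1)^{\p{v}_0\p{v}_i}q^{-\langle\lambda,\lambda_i\rangle}v_i\otimes v_0$. The swap then moves $v_0$ into the slot contracted with $v_i^\vee$, and
$\ev_V(v_0\otimes v_i^\vee)=(-1)^{\p{v}_0\p{v}_i}v_i^\vee(K_\pi v_0)=(-1)^{\p{v}_0\p{v}_i}\delta_{i0}q^{\langle\pi,\lambda\rangle}$.
It is this pairing, not a weight-matching argument, that produces $\delta_{i0}$. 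After it, the two signs cancel and you get $q^{-\langle\lambda-\pi,\lambda\rangle}v_0$, exactly as in the paper.
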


\begin{proof} 
Let $\{v_i\}_i$ be a homogeneous weight basis of $V$ with $v_i$ of weight $\lambda_i$. Fix $\lambda_0 = \lambda$, so that $v_0$ is highest-weight. We compute
\begin{eqnarray*}
\theta_V v_0
&=&
(\id_V \otimes \ev_V) \circ (\tau \circ \mathcal{R}^q  \otimes \id_{V^{\vee}})\sum_i v_0 \otimes  v_i \otimes v_i^{\vee} \\
&=&
(\id_V \otimes \ev_V)\sum_i (-1)^{\p{v}_0 \p{v}_i} q^{-\langle \lambda, \lambda_i \rangle} v_i \otimes v_0 \otimes v_i^{\vee}\\
&=&
\sum_i (-1)^{\p{v}_0 \p{v}_i + \overline{v_0} \p{v}_i} q^{\langle -\lambda, \lambda_i \rangle + \langle \pi,\lambda \rangle} \delta_{i0}v_i
=
q^{- \langle \lambda - \pi,\lambda \rangle} v_0 .
\end{eqnarray*}
The second equality follows from the fact that $v_0$ is highest-weight and the explicit form of $\check{\mathcal{R}}^q$. The first statement now follows from the isomorphism $\End_{\cat}(V) \simeq \C \cdot \id_V$. The second statement follows from the first statement and Lemma \ref{lem:dualKac}.
\end{proof}

In view of Theorem \ref{thm:genericSemiSimple}, Corollary \ref{cor:braidedCat} and Proposition \ref{prop:ribCat}, we may apply \cite[Thm. 9]{geer2018} to deduce that $\cat$ is a ribbon category.

\subsection{Modified traces}
\label{sec:modTrace}

\begin{Prop}
\label{prop:unimodularity}
The category $\cat$ is unimodular. 
\end{Prop}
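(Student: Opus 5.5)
We use the standard notion: $\cat$ is unimodular if the projective cover $P_{\unit}$ of the unit object is also its injective hull, i.e.\ $\mathrm{soc}(P_{\unit}) \simeq \unit$. Equivalently, at the Hopf-algebra level, the distinguished grouplike element (the modular character paired with the integral) is trivial on weight modules. We follow the strategy of \cite[\S 3.4]{anghel2021} and \cite{geer2018}. The plan is to realize $P_{\unit}$ inside $K(\lambda) \otimes K(\lambda)^{\vee}$ for a generic typical $\lambda$ and then compute its socle.

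\textbf{Step 1: locating $P_{\unit}$.} Fix $\lambda$ with $[\lambda] \in \Gr \setminus \SSS$, $\lambda$ typical, and $V_0(\lambda)$ simple. By Proposition \ref{prop:kacProjective}, $K(\lambda)$ is projective and injective. Since projectives form an ideal, $K(\lambda) \otimes K(\lambda)^{\vee}$ is projective and injective. It contains $\unit$ as a subobject via $\tcoev$ and as a quotient via $\ev$, so $P_{\unit}$ is a direct summand of it. In particular $P_{\unit}$ is also injective.

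\textbf{Step 2: reducing to a socle computation.} For a projective-injective indecomposable $P$ with top $\unit$, the socle is a simple module. It has the form $\sigma(\mu,\p p)$ for some one-dimensional module, or more generally is a simple $S$ with $P \simeq I(S)$. Unimodularity is therefore equivalent to $\mathrm{soc}(P_{\unit}) \simeq \unit$. We will detect the socle via the left and right integrals of a finite-dimensional quotient. Concretely, we work with the restricted algebra $\overline{U}^{[0]}_q(\ospn)$ built in the proof of Theorem \ref{thm:genericSemiSimple} with $\lambda=0$, which acts on $\cat_{[0]}$. We write its two-sided integral explicitly as
\[
\Lambda = \Gamma^+_{\p 0}\Gamma^+_{\p 1}\Big(\sum_{\mu} K_{\mu}\Big)\Gamma^-_{\p 1}\Gamma^-_{\p 0},
\]
where the sum runs over group-like elements of the finite Cartan part. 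We then check that the distinguished grouplike element, defined by $x\Lambda = \Lambda\, \alpha(x)$, is trivial on weight modules. By weight considerations, $\alpha$ is a character that vanishes on $E_i$ and $F_i$. It is determined by its value on the $K$'s, and that value is $q$ raised to the pairing of the total weight of $\Gamma^+_{\p 0}\Gamma^+_{\p 1}$ with each simple root. This total weight is $2(r-1)\rho_{\p 0}+2\rho_{\p 1}=\pi$, up to sign. The character on $H_i$ is given by commuting $H_i$ past $\Gamma^{\pm}$, and the contributions cancel because $\Gamma^+$ and $\Gamma^-$ have opposite weights. On $K_i$ the character is $q^{\pm\langle \pi,\alpha_i\rangle}$ combined with its inverse, which again cancels. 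The parity of $\Gamma^{\pm}_{\p 1}$ is that of $2n$, which is even, so no sign character appears either.

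\textbf{Main obstacle.} The delicate point is verifying that $\Lambda$ really is a left and right integral in the restricted unrolled setting, where the $H_i$ are not in the finite algebra. The $H$-action is compatible only because the socle must be a weight module of weight $0$: $\Lambda$ has weight $0$. The other concern is that the odd part contributes no nontrivial sign, which would make the socle $\sigma(0,\p 1)$. We will handle the odd part using Lemma \ref{lem:gammaCentral} and the parity computation from Lemma \ref{lem:dualKac}, where the lowest-weight vector of a Kac module has even parity. This yields $\mathrm{soc}(P_{\unit}) \simeq \sigma(0,\p 0)=\unit$. As a cross-check, the lowest weight of $K(\lambda)$ is $\lambda-\pi$ and its dual has highest weight $-\lambda+\pi$. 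Pairing these shows that the copy of $\unit$ appearing in the top of $K(\lambda)\otimes K(\lambda)^{\vee}$ coincides with the one in the socle, consistent with the conclusion.
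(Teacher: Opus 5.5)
There is a genuine gap. The step that carries all of the content, namely that $\overline{U}^{[0]}_q(\ospn)$ has a two-sided integral, is asserted rather than proved. Moreover, the element you write down is not an integral. The prefactor $\Gamma^+_{\p 0}\Gamma^+_{\p 1}$ has weight $\pi$, and $\sum_\mu K_\mu$ absorbs $K_i$, so $K_i\Lambda=q^{\langle \alpha_i,\pi\rangle}\Lambda=\Lambda K_i$. The computation in Lemma \ref{lem:pivotElement} shows $q^{\langle \alpha_i,\pi\rangle}=q^{-\langle\alpha_i,\alpha_i\rangle}\neq 1$ for $1\le i\le n$.

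Your verification of the distinguished grouplike element is also circular. For a two-sided integral the modular function is $\epsilon$ by definition. The cancellation you describe (opposite weights of $\Gamma^{\pm}$) only shows that $\Lambda$ has weight $0$, i.e.\ that it commutes with the $K_i$ and $H_i$. It does not give $K_i\Lambda=\Lambda$, nor does it control $E_i\Lambda$ and $F_i\Lambda$, which is where the work lies.

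Finally, even granting unimodularity of the finite-dimensional Hopf superalgebra, the passage to $\cat$ is missing. The restriction functor from $\cat_{[0]}$ to $\overline{U}^{[0]}_q(\ospn)$-modules is not full. Indeed, $\sigma(z,\p 0)$ with $\lambda_z\in\Lambda_{\FR}$ is isomorphic to $\unit$ over $\overline{U}^{[0]}_q(\ospn)$, which is essentially what the proof of Theorem \ref{thm:genericSemiSimple} shows. So you would at best obtain $\mathrm{soc}(P_\unit)\simeq\sigma(z)$ for some $z\in\FR$, and only after showing that projective covers in $\cat$ restrict to projective covers over the finite algebra. Your sentence ``the socle must be a weight module of weight $0$'' assumes exactly what has to be shown.

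The paper's proof avoids integrals and works where your ``cross-check'' points. Take generic $V=K(\lambda)$. The weight spaces of $V\otimes V^\vee$ of weights $\pi$ and $-\pi$ are one-dimensional, spanned by $v_+\otimes v_-^\vee$ and $v_-\otimes v_+^\vee$. These therefore lie in indecomposable summands $P_i$ and $P_j$ with $P_i^\vee\simeq P_j$. Since $\Hom_{\cat}(\unit,V\otimes V^\vee)\simeq\End_{\cat}(V)\simeq\C$, the injective hull $P_0$ of $\unit$ is the unique summand containing a non-zero invariant vector.

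The missing idea is the explicit check that $\Gamma^-(v_+\otimes v_-^\vee)$ and $\Gamma^+(v_-\otimes v_+^\vee)$ are non-zero invariant vectors, where $\Gamma^\pm=\Gamma^\pm_{\p 1}\Gamma^\pm_{\p 0}$. Invariance under $E_i$ and $F_i$ uses Lemma \ref{lem:gammaCentral}, \cite[Lem. 5(1)]{zhang1993}, reordering $\Gamma^+_{\p 0}$ so that $\alpha_i$ comes first, and weight arguments. Non-vanishing uses simplicity of $V_0(\lambda)$ and typicality of $\lambda$. This forces $i=j=0$, hence $P_0^\vee\simeq P_0$. Your last paragraph asserts the conclusion of this computation without supplying it.
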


\begin{proof}
We need to prove that the injective hull of the monoidal unit $\C \in \cat$ is self-dual.

Let $V = K(\lambda) \in \cat$  be a generic simple object with highest- and lowest-weight vectors $v_+$ and $v_-$ of weights $\lambda$ and $\lambda - \pi$, respectively; see Lemma \ref{lem:dualKac}. Since $V$ is projective (Proposition \ref{prop:kacProjective}), there is a decomposition into projective indecomposables,
\[
V \otimes V^{\vee} \simeq \bigoplus_i P_i.
\]
Adjunction and Schur's Lemma give
\[
\Hom_{\cat}(\C, \bigoplus_iP_i)
\simeq
\End_{\cat}(V)
\simeq
\C.
\]
We deduce that the injective hull of $\C$ appears exactly once in $\bigoplus_i P_i$, call it $P_0$, and that $P_0$ is the unique summand which contains a non-zero invariant vector. Since the weight spaces of weights $\pi$ and $-\pi$ in $V \otimes V^{\vee}$ are each one-dimensional, $v_+ \otimes v_-^{\vee} \in P_i$ and $v_- \otimes v_+^{\vee} \in P_j$ for some indices $i$ and $j$. Since $(V \otimes V^{\vee})^{\vee} \simeq V \otimes V^{\vee}$, we have $P_i^{\vee} \simeq P_j$.

Keeping the notation of Section \ref{sec:quantGroup}, consider the element $\Gamma^{\pm} = \Gamma^{\pm}_{\p 1} \Gamma^{\pm}_{\p 0}$ of weight $\pm \pi$. We claim that $\Gamma^-(v_+ \otimes v_-^{\vee})$ and $\Gamma^+(v_- \otimes v_+^{\vee})$ are non-zero invariant vectors. From this it will follow that $i=j=0$ and $P_0$ is self-dual. We prove the claim for $\Gamma^+(v_- \otimes v_+^{\vee})$, omitting the similar proof for $\Gamma^-(v_+ \otimes v_-^{\vee})$.

As $\Gamma^+(v_- \otimes v_+^{\vee})$ has weight $0$, it is invariant under $H_i$. Invariance under $E_0$ follows from the vanishing of $E_0 \Gamma^+$, a consequence of \cite[Lem. 5(1)]{zhang1993}. For $1 \leq i \leq n$, Lemma \ref{lem:gammaCentral} gives $E_i \Gamma^+ = \Gamma^+_{\p 1} E_i \Gamma^+_{\p 0}$. Up to a non-zero scalar, the element $\Gamma^+_{\p 0}$ is independent of the convex ordering on $\Delta^+_{\p 0}$ used in its definition; see \cite[Lem. 37]{geer2013b}. In particular, choosing an ordering in which $\alpha_i$ is first shows that $E_i \Gamma^+_{\p 0} =0$. Continuing, we compute
\begin{equation}
\label{eq:FZeroApplied}
F_0 \Gamma^+ (v_- \otimes v_+^{\vee})
=
-E_0 F_0 \tilde{\Gamma}^+_{\p 1} \Gamma^+_{\p 0} (v_- \otimes v_+^{\vee}) + \frac{K_0 - K_0^{-1}}{q_0 - q_0^{-1}} \tilde{\Gamma}^+_{\p 1} \Gamma^+_{\p 0} (v_- \otimes v_+^{\vee}),
\end{equation}
where $\tilde{\Gamma}^+_{\p 1}$ is defined similarly to $\Gamma^+_{\p 1}$ but with the root $\alpha_0$ omitted from the product.
Since $\tilde{\Gamma}^+_{\p 1} \Gamma^+_{\p 0} (v_- \otimes v_+^{\vee})$ has weight $-\alpha_0$ 
and $\langle \alpha_0, \alpha_0 \rangle =0$, the second term in the right-hand side of equation \eqref{eq:FZeroApplied} vanishes. Note that weights in the submodule generated by $v_- \otimes v^{\vee}_+$ are of the form $-\pi + \sum_{\alpha \in \Delta^+} c_{\alpha} \alpha$, where $0 \leq c_{\alpha} \leq m_{\p \alpha}$. Since $- 2 \alpha_0$, the weight of $F_0 \tilde{\Gamma}^+_{\p 1} \Gamma^+_{\p 0} (v_- \otimes v_+^{\vee})$, is not of this form, this vector vanishes, proving invariance under $F_0$. For $1 \leq i \leq n$, Lemma \ref{lem:gammaCentral} gives
\[
F_i \Gamma^+ (v_- \otimes v_+^{\vee})
=
\Gamma^+_{\p 1} F_i \Gamma^+_{\p 0} (v_- \otimes v_+^{\vee}).
\]
As above, by ordering $\Delta^+_{\p 0}$ so that $\alpha_i$ is first, we may write $\Gamma^+_{\p 0} = E^{r-1}_i \tilde{\Gamma}^+_{\p 0}$, where now $E^{r-1}_i$ is omitted from the product $\tilde{\Gamma}^+_{\p 0}$. Standard calculations (see \cite[\S 1.3]{jantzen1996}) give
\[
F_i \Gamma^+_{\p 0}
=
E_i^{r-1} F_i \tilde{\Gamma}^+_{\p 0} - [r-1]_{q_i} E^{r-2}_i [K_i; r-2] \tilde{\Gamma}^+_{\p 0}. 
\]
Since $\tilde{\Gamma}^+_{\p 0} (v_- \otimes v_+^{\vee})$ has weight $-(r-1) \alpha_i-2 \rho_{\p 1}$, the action of the second summand on $v_- \otimes v^{\vee}_+$ is proportional to $[\langle \alpha_i, -(r-1) \alpha_i - 2\rho_{\p 1} \rangle + d_i(r-2)]_q$ which, using Lemma \ref{lem:rhoPairings}\eqref{ite:oddRhoSimp}, equals $[-r]_{q_i}=0$. Finally, $E_i^{r-1} F_i \tilde{\Gamma}^+_{\p 0} (v_- \otimes v_+^{\vee})$ vanishes by a weight argument similar to that used to treat the first term on the right-hand side of equation \eqref{eq:FZeroApplied}.

It remains to verify that $\Gamma^+(v_- \otimes v_+^{\vee})$ is non-zero. From the form of the coproduct of positive root vectors (see \cite[Lem. 10.1.1]{yamane1994}), we find
\[
\Gamma^+ (v_- \otimes v_+^{\vee})
=
\Gamma^+ v_- \otimes v_+^{\vee} + \cdots,
\]
where the omitted terms are of the form $a \otimes b$, where $b \in V^{\vee}$ has weight strictly greater than $-\lambda + \pi$. By simplicity of $V_0(\lambda)$, the vector $\Gamma^+_{\p 0} v^{\vee}_+$ is a non-zero scalar multiple of $v^{\vee}_-$. Finally, $\Gamma^+_{\p 1} v^{\vee}_-$ is non-zero by the typicality of $\lambda$; see the proof of Proposition \ref{prop:kacTypical}.
\end{proof}

It follows from Proposition \ref{prop:unimodularity} and \cite[Cor. 6.5]{geer2022} that $\cat$ admits a nondegenerate modified trace $\mt$ on its projective ideal. Moreover, $\mt$ is unique up to a non-zero scalar. Before normalizing $\mt$ in Corollary \ref{cor:mtraceNormalized} below, we require some preparation.

Let $\Z [ \Cart^{\vee} ]$ be the group ring of $\Cart^{\vee}$. Write $e^{\lambda} \in \Z [ \Cart^{\vee} ]$ for the basis element corresponding to $\lambda \in \Cart^{\vee}$. Each $\alpha \in \Cart^{\vee}$ defines a ring homomorphism
\[
\phi_{\alpha}: \Z[\Cart^{\vee}] \rightarrow \C,
\qquad
e^{\lambda} \mapsto q^{\langle \alpha , \lambda \rangle}.
\]
The \emph{supercharacter} of $V \in \cat$ is
\[
\sch(V)
=
\sum_{\lambda \in \Cart^{\vee}} \left( \sdim_{\C} V[ \lambda] \right) e^{\lambda} \in \Z[\Cart^{\vee}],
\]
where $\sdim_{\C} V[ \lambda]$ is the super dimension of the $\lambda$-weight space of $V$.

\begin{Ex}
\label{ex:superChar}
Let $\lambda \in \Cart^{\vee}$ be generic. As a super $\Cart$-module, we have $K(\lambda) \simeq \overline{U}_q^{<0}(\ospn) \otimes \C_{\lambda}$, where $\C_{\lambda}$ is in degree $(\lambda, \p 0) \in \Cart^{\vee} \times \Ztwo$. The Poincar\'{e}--Birkhoff--Witt Theorem gives
\[
\sch (K(\lambda))
=
e^{\lambda} \prod_{\alpha \in \Delta_{\p 0}^+}(1 +  e^{- \alpha} + \dots + e^{-(r - 1)\alpha}) \prod_{\alpha \in \Delta_{\p 1}^+}(1 - e^{- \alpha}). \qedhere
\]
\end{Ex}

Given $V, W \in \cat$, let $S^{\prime}(V,W) \in \End_{\cat}(W)$ be the composition
\begin{equation}
\label{eqn:partialtrace}
(\id_{W} \otimes \ev_V) \circ (c_{V,W} \otimes \id_{V^{\vee}}) \circ (c_{W, V} \otimes \id_{V^{\vee}}) \circ (\id_{W} \otimes \tcoev_V).
\end{equation}

\begin{Lem}
\label{lem:longHopf}
Let $V,W \in \cat$ be simple with $W$ of highest weight $\lambda \in \Cart^{\vee}$. Then we have
\[
\langle S^{\prime}(V,W) \rangle = \phi_{-2\lambda + \pi}(\sch(V)).
\]
If, moreover, $\mu \in \Cart^{\vee}$ is generic, then
\begin{equation*}
\langle S^{\prime}(K(\mu),W) \rangle
=
q^{- 2 \langle \lambda - \frac{\pi}{2},\mu - \frac{\pi}{2} \rangle} \frac{\prod_{\alpha \in \Delta_{\p 0}^+} \{{r\langle\lambda - \frac{\pi}{2},\alpha \rangle}\}_q}{\prod_{\alpha \in \Delta^+} \{\langle \lambda - \frac{\pi}{2}, \alpha \rangle\}_q^{\epsilon(\alpha)}}.
\end{equation*}
\end{Lem}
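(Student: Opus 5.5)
The plan is to evaluate $S^{\prime}(V,W)$ on a highest-weight vector $w_0 \in W$ of weight $\lambda$. Since $W$ is simple, $\End_{\cat}(W) \simeq \C$, so the scalar $\langle S^{\prime}(V,W) \rangle$ can be read off from the $w_0$-component of $S^{\prime}(V,W) w_0$. Fix a homogeneous weight basis $\{v_i\}$ of $V$, with $v_i$ of weight $\mu_i$.

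\emph{Step 1.} Apply $\id_W \otimes \tcoev_V$ to get $\sum_i w_0 \otimes v_i \otimes v_i^{\vee}$.

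\emph{Step 2.} Apply $c_{W,V} = \tau \circ \check{\mathcal{R}}^q \mathcal{H}^q$. Since $w_0$ is highest weight, every term of $\check{\mathcal{R}}^q$ other than $1 \otimes 1$ carries some $E_\alpha$ in the first tensor factor and therefore kills $w_0$. Hence
\[
c_{W,V}(w_0 \otimes v_i) = (-1)^{\p w_0 \p v_i} q^{-\langle \lambda,\mu_i\rangle} v_i \otimes w_0 .
\]

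\emph{Step 3.} Apply $c_{V,W}$ to $v_i \otimes w_0$. Now $\check{\mathcal{R}}^q$ contributes terms $E_{\vec\alpha} v_i \otimes F_{\vec\alpha} w_0$. After the swap and the closing map $\id_W \otimes \ev_V$, which pairs the $V$-factor against $v_i^{\vee}$ (with $K_\pi$ inserted), a nontrivial term would have to return the weight of $v_i$ to $\mu_i$. It therefore contributes only through its $F$-part acting on $w_0$, giving a vector of weight strictly below $\lambda$ in $W$. Such vectors have zero $w_0$-component, so only the identity term of $\check{\mathcal{R}}^q$ survives, with factor $q^{-\langle \mu_i,\lambda\rangle}$. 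The evaluation supplies $q^{\langle \pi,\mu_i\rangle}$, and the Koszul signs from the two swaps and from $\ev_V$ combine to $(-1)^{\p v_i}$.

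Summing over $i$ gives
\[
\langle S^{\prime}(V,W)\rangle = \sum_i (-1)^{\p v_i} q^{\langle -2\lambda+\pi,\mu_i\rangle} = \phi_{-2\lambda+\pi}(\sch V).
\]
The main point needing care is this sign and weight bookkeeping in Step 3. It is the same computation as in Proposition \ref{prop:ribCat}, and I would model the argument directly on it.

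\emph{Second formula.} Substitute $\sch(K(\mu))$ from Example \ref{ex:superChar} and set $\beta = -2\lambda+\pi = -2(\lambda - \frac{\pi}{2})$. The prefactor is $\phi_\beta(e^{\mu}) = q^{-2\langle \lambda-\frac{\pi}{2},\mu\rangle}$. Each even factor is a geometric series,
\[
\sum_{k=0}^{r-1} x^k = \frac{1-x^r}{1-x}, \qquad x = q^{-\langle \beta,\alpha\rangle},
\]
and each odd factor is $1 - q^{-\langle \beta,\alpha\rangle}$. Writing $1 - q^{2a} = -q^{a}\{a\}_q$, each factor becomes a ratio of $\{\cdot\}_q$'s times a power of $q$:
\[
\text{even } \alpha:\ q^{(r-1)a_\alpha}\,\frac{\{r a_\alpha\}_q}{\{a_\alpha\}_q},
\qquad
\text{odd } \alpha:\ -q^{a_\alpha}\{a_\alpha\}_q,
\qquad
a_\alpha = \langle \lambda - \tfrac{\pi}{2},\alpha\rangle .
\]
Collecting, the total power of $q$ is
\[
\sum_{\alpha \in \Delta^+_{\p 0}} (r-1)a_\alpha + \sum_{\alpha \in \Delta^+_{\p 1}} a_\alpha = \Big\langle \lambda-\tfrac{\pi}{2},\, 2(r-1)\rho_{\p 0} + 2\rho_{\p 1}\Big\rangle = \Big\langle \lambda-\tfrac{\pi}{2},\, \pi\Big\rangle .
\]
Adding this to $-2\langle \lambda-\frac{\pi}{2},\mu\rangle$ gives exactly $-2\langle \lambda-\frac{\pi}{2},\mu-\frac{\pi}{2}\rangle$. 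The signs $(-1)^{|\Delta^+_{\p 1}|} = (-1)^{2n} = 1$ cancel, which yields the stated expression.
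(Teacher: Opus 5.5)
Your proposal is correct and is essentially the paper's proof: you evaluate on a highest-weight vector $w_0$ and note that only $\mathcal{H}^q$ survives in both braidings. The paper kills the non-identity terms of $\check{\mathcal{R}}^q$ in the second braiding by your first observation, that $\ev_V$ pairs a vector of weight strictly above $\mu_i$ against $v_i^{\vee}$; you then sum the geometric series in $\phi_{-2\lambda+\pi}(\sch(K(\mu)))$ exactly as the paper does. Like the paper, your use of $\sum_{k=0}^{r-1}x^k=\frac{1-x^r}{1-x}$ tacitly needs $\{\langle\lambda-\frac{\pi}{2},\alpha\rangle\}_q\neq 0$ for even $\alpha$. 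When this fails (as for $\lambda=(r-1)\rho_{\p 0}$ in Corollary~\ref{cor:dimVrho}), the even factor is just $r$, which agrees with the stated formula read as a limit.
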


\begin{proof}
The proof follows that of \cite[Prop. 2.2]{geer2010} with modifications reflecting the differing pivotal structures; see Remark \ref{rem:differencePivotal}.
	
Let $\{v_i\}_i$ be a homogeneous weight basis of $V$ with $v_i$ of weight $\mu_i \in \Cart^{\vee}$. Let $w_0 \in W$ be a highest-weight vector. Write $S^{\prime}(V,W) = X_1 \circ X_2 \circ X_3 \circ X_4$, where $X_j$ corresponds to the obvious factor in equation \eqref{eqn:partialtrace}. We compute
\[
X_3 X_4 w_0
=
\sum_i (-1)^{\p{w}_0 \p{v}_i}q^{- \langle \lambda,\mu_i \rangle} v_i \otimes  w_0 \otimes v_i^{\vee}.
\]
Applying $X_2$ to this expression gives
\[
\sum_i q^{- \langle 2 \lambda,\mu_i \rangle} w_0 \otimes  v_i \otimes v_i^{\vee} + \sum_l w_l^{\prime} \otimes v_l^{\prime} \otimes v_l^{\vee},
\]
where the first and second sums arise from the application of $\tau \circ \mathcal{H}^q$ and $\tau \circ (\check{\mathcal{R}}^q-1) \mathcal{H}^q$, respectively. The explicit form of $\check{\mathcal{R}}^q$ shows that $v_l^{\prime}$ is a linear combination of basis vectors of $V$ of weights strictly greater than $\mu_l$. Applying $X_1$ to this expression then gives
\[
\sum_i (-1)^{\p{v}_i} q^{- \langle 2\lambda - \pi,\mu_i \rangle} w_0.
\]
Applying $\phi_{-2 \lambda + \pi}$ to the equality $\sch(V) = \sum_i (-1)^{\overline{v}_i} e^{\mu_i}$ gives the same expression, thereby proving the first statement.
 
For the second statement, we use Example \ref{ex:superChar} to compute
\[
\phi_{-2 \lambda + \pi} ( \sch(K(\mu)) )
=
q^{-\langle 2\lambda - \pi, \mu \rangle} \prod_{\alpha \in \Delta_{\p 0}^+} \left( \sum_{j=0}^{r-1} q^{\langle 2\lambda - \pi, j \alpha \rangle} \right)  \prod_{\alpha \in \Delta_{\p 1}^+} \left( 1 - q^{\langle 2 \lambda - \pi, \alpha \rangle} \right).
\]
Evaluating the geometric sum and simplifying gives the claimed equality.
\end{proof}

\begin{Cor}\label{cor:mtraceNormalized}
    The modified trace $\mt$ can be normalized so that the modified dimension of a generic Kac module is
\begin{equation}
\label{eq:modDimKac}
\qd(K(\lambda))
=
\frac{\prod_{\alpha \in \Delta^+} \left\{ \langle\lambda - \frac{\pi}{2},\alpha \rangle \right\}^{\epsilon(\alpha)}_q }{ \prod_{\alpha \in \Delta_{\p 0}^+} \left\{ {r \langle \lambda - \frac{\pi}{2},\alpha \rangle} \right\}_q }.
\end{equation}
\end{Cor}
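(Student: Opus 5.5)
The plan is to use the standard characterization of modified traces on generic simple projectives via the ``long Hopf link'' (open Hopf link) invariants, exactly as in \cite{geer2010} and \cite{anghel2021}. Since $\mt$ is a nondegenerate modified trace on the projective ideal, unique up to scalar, the partial trace property of Definition \ref{def:mtrace} gives, for generic simple projectives $V=K(\mu)$ and $W=K(\lambda)$,
\[
\mt_{W \otimes V}\big(c_{V,W}\circ c_{W,V}\big)
=
\qd(W)\,\langle S^{\prime}(V,W)\rangle
=
\qd(V)\,\langle S^{\prime}(W,V)\rangle .
\]
Here the first equality comes from taking the partial trace along $V$. For the second we take the partial trace along $W$, using the symmetry of $\mt$ and the naturality of the braiding to rewrite the closure on the other strand. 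This yields the compatibility relation
\[
\qd(K(\lambda))\,\langle S^{\prime}(K(\mu),K(\lambda))\rangle
=
\qd(K(\mu))\,\langle S^{\prime}(K(\lambda),K(\mu))\rangle
\]
for all generic $\lambda,\mu$.

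Next, substitute the formula of Lemma \ref{lem:longHopf}. The prefactor $q^{-2\langle \lambda-\frac{\pi}{2},\mu-\frac{\pi}{2}\rangle}$ is symmetric in $\lambda,\mu$, so it cancels. What remains is
\[
\qd(K(\lambda))\,\frac{\prod_{\alpha\in\Delta^+_{\p 0}}\{r\langle\lambda-\frac{\pi}{2},\alpha\rangle\}_q}{\prod_{\alpha\in\Delta^+}\{\langle\lambda-\frac{\pi}{2},\alpha\rangle\}_q^{\epsilon(\alpha)}}
\]
equal to the same expression with $\lambda$ replaced by $\mu$. Genericity is chosen so that all the $\{\cdot\}_q$ factors are nonzero. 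Typicality guarantees the odd factors are nonzero: $\langle \lambda-\frac{\pi}{2},\alpha\rangle$ differs from $\langle\lambda+\rho,\alpha\rangle$ by $-r\langle\rho_{\p 0},\alpha\rangle\in r\Z$ by Lemma \ref{lem:rhoPairings}\eqref{ite:eveRhoOdd}, so it vanishes modulo $\frac{r}{2}\Z$ only when $\lambda$ is atypical. The even factors are nonzero for generic $\lambda$, which excludes a nowhere dense set. It follows that $\qd(K(\lambda))$ times this ratio is a constant independent of $\lambda$. Rescaling $\mt$ by a nonzero scalar so that the constant equals $1$ gives formula \eqref{eq:modDimKac}.

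The main obstacle is the second equality in the first step, namely identifying $\mt$ of the Hopf-link endomorphism, taken via the partial trace along $W$, with $\qd(V)\langle S'(W,V)\rangle$. This requires the modified trace to satisfy both the left and the right partial trace property. In a ribbon category, a right modified trace on the projective ideal is automatically two-sided, as the conventions of \cite{geer2022} and \cite{geer2010} show, and I would invoke this. One must also check that $K(\lambda)\otimes K(\mu)$ lies in the projective ideal, which follows from Proposition \ref{prop:kacProjective}.

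A secondary point is that the set of generic $\lambda$ must be connected enough for the constant to be global. This is automatic here because the relation holds for every pair $\lambda,\mu$ of generic weights, not just for nearby ones.
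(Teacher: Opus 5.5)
Your proposal is correct and follows essentially the same route as the paper. Both derive $\qd(K(\lambda))\langle S^{\prime}(K(\mu),K(\lambda))\rangle=\qd(K(\mu))\langle S^{\prime}(K(\lambda),K(\mu))\rangle$ from the trace axioms, cancel the symmetric factor $q^{-2\langle\lambda-\frac{\pi}{2},\mu-\frac{\pi}{2}\rangle}$ from Lemma \ref{lem:longHopf}, and rescale $\mt$. One small simplification: you do not need the left partial trace property. Cyclicity applied to $f=c_{V,W}$ and $g=c_{W,V}$ already moves the Hopf-link endomorphism into $\End_{\cat}(V\otimes W)$, where the right partial trace along $W$ gives $\qd(V)\langle S^{\prime}(W,V)\rangle$.
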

\begin{proof}
Let $V, W \in \cat$ be simple projectives. Isotopy invariance of $\mt$ implies the identity
\[
\qd(V) \langle S^{\prime}(W,V) \rangle
=
\qd(W) \langle S^{\prime}(V,W) \rangle.
\]
Applying this to $V = K(\lambda)$ and $W = K(\mu)$ for generic $\lambda, \mu \in \Cart^{\vee}$ and using the explicit form of $S^{\prime}$ from Lemma \ref{lem:longHopf}---in particular, the symmetry of the overall $q$-exponential factor under the exchange of $\lambda$ and $\mu$---we see that $\mt$ can be normalized as stated.
\end{proof}

For later use, we combine Lemma \ref{lem:longHopf} and Corollary \ref{cor:mtraceNormalized} to write
\begin{equation}
\label{eq:longHopfDim}
\langle S^{\prime}(K(\lambda),K(\mu)) \rangle
=
\frac{q^{-2 \langle \lambda - \frac{\pi}{2},\mu - \frac{\pi}{2} \rangle}}{\qd(K(\mu))}.
\end{equation}

\begin{Cor}
\label{cor:dimVrho}
The module $K((r - 1)\rho_{\p 0}) \in \cat_{[0]}$ is a simple projective of dimension $r^{n^2}2^{2n}$. 
\end{Cor}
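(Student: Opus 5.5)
We verify three things about $\lambda_0 := (r-1)\rho_{\p 0}$: that $[\lambda_0]=[0]$, that $K(\lambda_0)$ is simple, and that it is projective of the stated dimension. The plan is to combine Proposition \ref{prop:kacTypical} with the simplicity of the even part $S_0(\lambda_0)$, then argue projectivity separately, since $[0]$ lies in $\SSS$ and Proposition \ref{prop:kacProjective} cannot be applied directly.

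\textbf{Grading.} From the expression of $\rho_{\p 0}$ in simple roots used in the proof of Lemma \ref{lem:rhoPairings}, with coefficients $\frac{i(2n-i+1)}{2}$ and $\frac{n(n+1)}{2}$, we get $\rho_{\p 0}\in\Lambda_R$. Hence $\lambda_0\in\Lambda_R$ and $K(\lambda_0)\in\cat_{[0]}$.

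\textbf{Even part.} Next, $S_0(\lambda_0)=V_0(\lambda_0)$ should be the Steinberg-type module for $\overline{U}^H_q(\mathfrak{sp}(2n))$. For each $\alpha\in\Delta^+_{\p 0}$ we have $\langle\lambda_0+\rho_{\p 0},\alpha\rangle = r\langle\rho_{\p 0},\alpha\rangle\in \frac{r}{2}\langle\alpha,\alpha\rangle\Z$. This is the standard condition, via \cite[Prop. 17]{geer2013b}, for the restricted Verma module to be simple: the Shapovalov-type factors $[\langle\lambda+\rho_{\p 0},\alpha\rangle - k\langle\alpha,\alpha\rangle/2]$ for $1\le k\le r-1$ are nonvanishing exactly when $\langle \lambda+\rho_{\p 0},\alpha\rangle$ is a multiple of $r d_\alpha$. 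The $\mathfrak{so}(2)$ factor imposes no condition. So $V_0(\lambda_0)$ is simple of dimension $r^{n^2}$, and PBW gives $\dim K(\lambda_0)=2^{2n}r^{n^2}$.

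\textbf{Typicality.} For $\alpha\in\Delta^+_{\p 1}$ we compute
\[
\langle\lambda_0+\rho,\alpha\rangle = r\langle\rho_{\p 0},\alpha\rangle - \langle\rho_{\p 1},\alpha\rangle = r\langle\rho_{\p 0},\alpha\rangle + n,
\]
using Lemma \ref{lem:rhoPairings}\eqref{ite:oddRhoOdd}. Since $\langle\rho_{\p 0},\alpha\rangle\in\Z$ by Lemma \ref{lem:rhoPairings}\eqref{ite:eveRhoOdd}, $\{\,\cdot\,\}_q$ vanishes iff $r\langle\rho_{\p 0},\alpha\rangle+n\in\frac r2\Z$, iff $2n\in r\Z$. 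Because $r$ is odd and does not divide $n$, this fails. Proposition \ref{prop:kacTypical} then shows $K(\lambda_0)$ is simple.

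\textbf{Projectivity.} This is the main obstacle. I would use the modified trace and unimodularity machinery: a simple object with nonzero modified-dimension-like data is projective in the projective ideal. Concretely, I would realize $K(\lambda_0)$ as a direct summand of $K(\mu)\otimes K(\nu)$ for generic $\mu,\nu$ with $\mu+\nu\in\Lambda_R$. Such a tensor product is projective because the projective ideal is closed under tensor products, and it lies in $\cat_{[0]}$.

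To detect the summand, first find a nonzero map $K(\lambda_0)\to K(\mu)\otimes K(\nu)$ by a highest-weight vector argument. Then split it with a map back, using simplicity of $K(\lambda_0)$ and self-duality through Lemma \ref{lem:dualKac}. A cleaner alternative is to adapt the argument of Proposition \ref{prop:kacProjective} directly. For this, the finite-dimensional algebra $\overline{U}^{[0]}_q(\ospn)$ should have $K(\lambda_0)$ as a block by itself, since it is the unique simple with its central character and has maximal dimension $D$. A simple module of dimension $D$ equal to the dimension of the induced "baby Verma" projective cover forces $K(\lambda_0)$ to coincide with its projective cover.

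I would pursue the latter. The key step is the inequality $\dim P(K(\lambda_0))\le D$. I expect to obtain it from the fact that projective covers in $\cat$ have a Kac-module filtration in which $K(\lambda_0)$ appears once, by the typicality of its linkage class.
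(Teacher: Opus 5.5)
\textbf{Verdict.} Your grading, dimension and typicality steps agree with the paper's, but projectivity is not proved: neither of your two routes reaches a retraction onto a projective object.

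\textbf{A small slip in the grading step.} The $\alpha_n$-coefficient of $\rho_{\p 0}$ is $\frac{n(n+1)}{4}$, not $\frac{n(n+1)}{2}$. For instance, when $n=1$ we have $\rho_{\p 0}=\delta_1=\frac12\alpha_1$. So $\rho_{\p 0}$ itself need not lie in $\Lambda_R$. The correct reason is that $2\rho_{\p 0}\in\Lambda_R$ and $r$ is odd, so $\lambda_0=(r-1)\rho_{\p 0}=\frac{r-1}{2}\cdot 2\rho_{\p 0}\in\Lambda_R$.

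\textbf{The gap: projectivity.} The route you choose to pursue rests on structure that neither the paper nor your proposal establishes. It needs Kac (or Verma) flags on projective covers in $\cat_{[0]}$ with a BGG-type reciprocity, and it needs $K(\lambda_0)$ to be alone in its block. Typicality only excludes linkage through odd roots. Isolating $\lambda_0$ would also require a linkage principle for the even roots at the root of unity and control of the central characters of $\overline{U}^{[0]}_q(\ospn)$. The inequality $\dim P(K(\lambda_0))\le D$ that you call the key step is essentially the whole claim.

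Your alternative route stalls at its crucial step. A nonzero map from a simple object into a projective one need not split. Simplicity together with Lemma \ref{lem:dualKac} does not produce a map back whose composite with it is nonzero.

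\textbf{The missing idea.} Your first sentence on projectivity gestures at it before you abandon it: use the open Hopf link.
\begin{itemize}
\item For generic $\mu$, the endomorphism $S'(K(\mu),K(\lambda_0))$ defined in equation \eqref{eqn:partialtrace} factors as $K(\lambda_0)\to K(\mu)\otimes K(\lambda_0)\otimes K(\mu)^{\vee}\to K(\lambda_0)$. The first map is coevaluation followed by one braiding; the second is the other braiding followed by evaluation. Both maps are canonical.
\item The middle object is projective because $K(\mu)$ is (Proposition \ref{prop:kacProjective}) and projectives form an ideal.
\item $S'$ is a scalar because $K(\lambda_0)$ is simple. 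If that scalar is nonzero, $K(\lambda_0)$ is a retract of a projective, hence projective.
\item By Lemma \ref{lem:longHopf}, the scalar is $\phi_{-2\lambda_0+\pi}(\sch(K(\mu)))$. Here $2\lambda_0-\pi=-2\rho_{\p 1}$, which pairs to $0$ with every even root and to $2n$ with every odd positive root.
\item Example \ref{ex:superChar} then turns the scalar into a nonzero power of $q$ times $r^{n^2}(1-q^{2n})^{2n}$. This is nonzero precisely because $r\nmid n$.
\end{itemize}
Use this character form rather than the closed quotient in Lemma \ref{lem:longHopf}, which degenerates to $0/0$ at $\lambda_0$. This is the paper's argument, and it needs no block theory.
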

\begin{proof}

The degree statement follows from the assumption that $r$ is odd and $2 \rho_{\p 0} \in \Lambda_R$.

By \cite[\S 2.2]{derenzi2020}, $(r - 1)\rho_{\p 0}$ is the highest weight of a simple Verma $\overline{U}_q^H(\ospn_{\p 0})$-module. In particular, $K((r - 1)\rho_{\p 0})$ has the stated dimension.

We use Proposition \ref{prop:kacTypical} to prove that $(r - 1)\rho_{\p 0}$ is typical.
For $\alpha \in \Delta_{\p 1}^+$, we have
\[
\langle (r - 1)\rho_{\p 0} + \rho , \alpha \rangle
=
r\langle \rho_{\p 0}, \alpha \rangle - \langle\rho_{\p 1} ,\alpha \rangle.
\]
Parts \eqref{ite:oddRhoOdd} and \eqref{ite:eveRhoOdd} of Lemma \ref{lem:rhoPairings} imply that $\langle\rho_{\p 1} ,\alpha \rangle = -n$ and $r\langle \rho_{\p 0}, \alpha \rangle \in r\Z$, respectively. It follows that $(r-1) \rho_{\p 0}$ is typical if and only if $q^{2n} \neq 1$, that is, $r$ does not divide $n$.

It remains to prove projectivity. Let $\lambda \in \Cart^{\vee}$ be generic. Lemma \ref{lem:longHopf} shows that $S^{\prime}(K(\lambda),K((r - 1)\rho_{\p 0}))$ is non-zero. On the other hand, $S^{\prime}(K(\lambda),K((r - 1)\rho_{\p 0}))$ is the composition
\[
K((r - 1)\rho_{\p 0})
\xrightarrow[]{i} 
K(\lambda) \otimes K((r - 1)\rho_{\p 0}) \otimes K(\lambda)^{\vee}
\xrightarrow[]{\pi}
K((r - 1)\rho_{\p 0}),
\]
where $i = X_1 \circ X_2$ and $\pi = X_3 \circ X_4$; see the proof of Lemma \ref{lem:longHopf}. After normalizing $i$ and $\pi$, this realizes $K((r-1)\rho_{\p 0})$ as a retract of $K(\lambda) \otimes K((r - 1)\rho_{\p 0}) \otimes K(\lambda)^{\vee}$. Projectivity of $K((r - 1)\rho_{\p 0})$ then follows from that of $K(\lambda)$.
\end{proof}

\subsection{Relative modularity}
\label{sec:relMod}

We use the notation of Example \ref{ex:oneDimModules} and Definition \ref{def:freeRealWeights}. Let $\FR$  be the abelian group $\Lambda_{\FR} \times \Ztwo$. Write an element $z \in \FR$ as $(\lambda_z, \p{z})$.

\begin{Lem}
\label{lem:freeRealization}
The assignment $z \mapsto \sigma(z)$ defines a free realization of $\FR$ on $\cat_{[0]}$.
\end{Lem}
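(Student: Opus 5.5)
We verify the three conditions of Definition \ref{def:freeReal}, after first checking that $\sigma$ lands in $\cat_{[0]}$ and is monoidal.

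\emph{Well-definedness and monoidality.} For $z=(\lambda_z,\p z)$ we have $\lambda_z\in\Lambda_{\FR}\subset\Lambda^{(1)}$, so by Example \ref{ex:oneDimModules} $\sigma(z)$ is a one-dimensional weight module. Since $\lambda_z\in\Lambda_R$, it lies in $\cat_{[0]}$. The generators $E_i,F_i$ act by zero and $H_i$ acts diagonally with primitive coproduct, so $\sigma(z)\otimes\sigma(z')$ is one-dimensional of weight $\lambda_z+\lambda_{z'}$ and parity $\p z+\p{z'}$. Hence $\sigma(z)\otimes\sigma(z')\simeq\sigma(z+z')$. We fix these isomorphisms by $1\otimes 1\mapsto 1$ on chosen basis vectors. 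Associativity coherence then holds trivially, because the associator of super vector spaces introduces no signs. Finally, $\sigma(0)=\C=\unit$.

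\emph{Twist.} $\sigma(z)$ is a highest-weight module of highest weight $\lambda_z$, so Proposition \ref{prop:ribCat} gives $\theta_{\sigma(z)}=q^{-\langle\lambda_z-\pi,\lambda_z\rangle}\id$. We must show $\langle\lambda_z,\lambda_z\rangle-\langle\pi,\lambda_z\rangle\in r\Z$.

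For the second term, write $\pi=\sum_i\pi_i\alpha_i$ with $\pi_i\in\Z$, which is possible since $\pi\in\Lambda_R$. Then $\langle\pi,\lambda_z\rangle=\sum_i\pi_i\langle\alpha_i,\lambda_z\rangle\in r\Z$ by equation \eqref{eq:freeRealWeights}.

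For the first term, write $\lambda_z=\sum_j\mu_j\alpha_j$ with $\mu_j\in\Z$. Then $\langle\lambda_z,\lambda_z\rangle=\sum_j\mu_j\langle\alpha_j,\lambda_z\rangle\in r\Z$ by the same characterization.

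One point needs care: the super sign. The computation in Proposition \ref{prop:ribCat} produces signs $(-1)^{\p v_0\p v_i+\p v_0\p v_i}=1$, so the parity $\p z$ does not contribute. Hence $\theta_{\sigma(z)}=\id$.

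\emph{Freeness.} This is the main step. Suppose $V$ is simple and $V\otimes\sigma(z)\simeq V$.

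First, $\dim V\otimes\sigma(z)=\dim V$ and weights shift by $\lambda_z$. Comparing finite weight multisets (for instance, the sum of all weights counted with multiplicity, which shifts by $(\dim V)\lambda_z$) forces $\lambda_z=0$.

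Next, with $\lambda_z=0$, the isomorphism $V\otimes\sigma(0,\p z)\simeq V$ must be parity preserving. Tensoring with $\sigma(0,\p 1)$ is the parity shift $\Pi V$. A simple $V$ has a highest-weight vector whose weight space, in the highest weight of $V$, is one-dimensional: it is spanned by the image of $S_0(\lambda)$'s highest-weight vector, as in the Kac-module quotient description, or more simply it is generated under the even Cartan part alone. Its parity is therefore flipped by $\Pi$. Any even isomorphism must preserve the highest weight space together with its parity, so $\p z=\p 0$.

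The subtle point is to ensure the highest weight space of an arbitrary simple object is one-dimensional. If needed, I would instead argue via supercharacters: $\sch(\Pi V)=-\sch(V)$. Since $\sch(V)\neq 0$ (its highest weight coefficient is $\pm1$), $\Pi V\not\simeq V$.
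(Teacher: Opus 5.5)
Your proof is correct and follows the paper's argument. The twist is computed from Proposition \ref{prop:ribCat} and trivialized by the integrality $\langle \alpha_i, \lambda_z \rangle \in r\Z$ from the proof of Lemma \ref{lem:alternativeZ}, using $\pi, \lambda_z \in \Lambda_R$. Your monoidality and freeness checks spell out what the paper calls ``clear''; the paper also records $\qdim_{\cat}\,\sigma(z) = (-1)^{\p z}$, which Definition \ref{def:freeReal} does not require.

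The one point you leave loosely justified is that the top weight space of a simple $V$ is one-dimensional. Take a non-zero homogeneous vector $v$ of maximal weight in $V$. It is killed by every $E_i$, so $V = \overline{U}^{<0}_q(\ospn)\, v$ by the PBW decomposition, and the top weight space is $\C v$. Your supercharacter fallback relies on this same fact rather than avoiding it.
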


\begin{proof}
Proposition \ref{prop:ribCat} gives $\theta_{\sigma(z)} = q^{- \langle \lambda_z - \pi,\lambda_z \rangle} \id_{\sigma(z)}$. The proof of Lemma \ref{lem:alternativeZ} shows that $q^{- \langle \lambda_z - \pi,\lambda_z \rangle} = 1$, as required. A direct computation gives $\qdim_{\cat} \, \sigma(z) = (-1)^{\p{z}} q^{-\langle \pi, \lambda_z \rangle}$ which, again by the proof of Lemma \ref{lem:alternativeZ}, is equal to $(-1)^{\p{z}}$. The remaining properties of a free realization are clear.
\end{proof}

\begin{Lem}
\label{lem:bicharacter}
The bilinear map $\psi: \Gr \times \FR \rightarrow \C^{\times}$, $([ \lambda ],z) \mapsto  q^{-2 \langle \lambda, \lambda_z \rangle}$, satisfies the conditions of Definition \ref{def:preMod}.
\end{Lem}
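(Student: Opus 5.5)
We must show two things: that $\psi([\lambda],z) = q^{-2\langle \lambda, \lambda_z \rangle}$ is a well-defined bicharacter $\Gr \times \FR \to \C^\times$, and that the double braiding $c_{\sigma(z),V}\circ c_{V,\sigma(z)}$ acts on any $V \in \cat_{[\lambda]}$ as the scalar $\psi([\lambda],z)$.

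\emph{Well-definedness.} Replacing $\lambda$ by $\lambda+\mu$ with $\mu \in \Lambda_R$ changes the exponent by $-2\langle \mu, \lambda_z\rangle$. Since $\lambda_z \in \Lambda_{\FR}$, equation \eqref{eq:freeRealWeights} gives $\langle \alpha_i, \lambda_z\rangle \in r\Z$ for all $i$. By linearity $\langle \mu,\lambda_z\rangle \in r\Z$, so $q^{-2\langle \mu,\lambda_z\rangle}=1$.

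\emph{Bicharacter property.} Bilinearity of $\langle-,-\rangle$ makes $\psi$ multiplicative in each variable. The parity component $\p z$ does not enter the formula, so $\psi$ is a character in $z$ trivial on $\Ztwo$.

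\emph{Double braiding.} Let $v \in V$ have weight $\mu \in [\lambda]$ and let $w$ span $\sigma(z)$, of weight $\lambda_z$ and parity $\p z$. Every $E_\alpha, F_\alpha$ acts by zero on $\sigma(z)$, so in $\check{\mathcal{R}}^q$ every term except $1\otimes 1$ vanishes on $v\otimes w$ and on $w\otimes v$. Hence $\mathcal{R}^q$ acts as $\mathcal{H}^q$, and by \eqref{eq:HAction} we get
\[
c_{V,\sigma(z)}(v\otimes w) = (-1)^{\p v \p z} q^{-\langle \mu,\lambda_z\rangle} w\otimes v.
\]
Applying $c_{\sigma(z),V}$ contributes the same sign and the factor $q^{-\langle \lambda_z,\mu\rangle}$. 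The signs square to $1$, and the composite is $q^{-2\langle \mu,\lambda_z\rangle}\, v\otimes w$.

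Since $\mu-\lambda\in\Lambda_R$, the well-definedness step gives $q^{-2\langle\mu,\lambda_z\rangle}=q^{-2\langle\lambda,\lambda_z\rangle}$. The scalar is therefore independent of the weight vector, and the double braiding equals $\psi([\lambda],z)\,\id_{V\otimes\sigma(z)}$.

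The only point needing care is confirming that $\check{\mathcal{R}}^q$ truly reduces to $1\otimes 1$ in both orderings. For $v\otimes w$ the second tensor factor $F_\alpha$ kills $w$, and for $w\otimes v$ the first factor $E_\alpha$ kills $w$; the truncated exponentials are finite sums starting at $1$, so this is immediate.
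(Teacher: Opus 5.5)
Your proof is correct and follows the paper's argument: the double braiding acts on $v\otimes w$ by $q^{-2\langle \mu,\lambda_z\rangle}$ because only the Cartan factor $\mathcal{H}^q$ contributes, and $\langle \Lambda_R,\lambda_z\rangle\subset r\Z$ lets you replace $\mu$ by $\lambda$. You also spell out two steps the paper leaves implicit in its ``direct computation'': that the non-constant terms of $\check{\mathcal{R}}^q$ vanish on $\sigma(z)$, and that $\psi$ is well defined on $\Gr$.
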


\begin{proof}
Let $V \in \cat_{[\lambda]}$ with $v \in V$ of weight $\mu$ and $w \in \sigma(z)$. A direct computation gives
\begin{equation*}
c_{\sigma(z),V} \circ c_{V, \sigma(z)}(v \otimes w) = q^{-2 \langle \mu, \lambda_z \rangle} v \otimes w.
\end{equation*}
Since $\lambda - \mu \in \Lambda_R$ and $2 \langle \Lambda_R, \lambda_z \rangle \in r \Z$ (see the proof of Lemma \ref{lem:alternativeZ}), we have $q^{2 \langle \mu, \lambda_z \rangle} = q^{2 \langle \lambda ,\lambda_z \rangle}$, as required.
\end{proof}

\begin{Def}
\label{def:transparent}
Let $U \in \cat_{[0]}$. A morphism $f \in \End_{\cat}(U)$ is \emph{transparent} if, for any $V,W \in \cat_{[0]}$, there are equalities
\[
c_{U,V} \circ (f \otimes \id_V) \circ c_{V,U} = \id_V \otimes f,
\qquad
c_{W,U} \circ (\id_W \otimes f) \circ c_{U,W} = f \otimes \id_W.
\]
\end{Def}

The next result is a direct analogue of \cite[Lem. 2.3]{derenzi2020}.

\begin{Lem}
\label{lem:transparentFactor}
    Let $V \in \cat_{[0]}$ with transparent morphism $f \in \End_{\cat}(V)$. There exist $z_i \in \FR$ and $\pi_i \in \Hom_{\cat}(V,\sigma(z_i))$ and $\iota_i \in \Hom_{\cat}(\sigma(z_i),V)$ such that $f = \sum_{i = 1}^m \iota_i \circ \pi_i$.
\end{Lem}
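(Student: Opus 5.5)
The plan is to follow the proof of \cite[Lem. 2.3]{derenzi2020}, using the projective simple object $P := K((r-1)\rho_{\p 0}) \in \cat_{[0]}$ of Corollary \ref{cor:dimVrho} as a probe.

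\textbf{Step 1: transparency of $f$ forces an eigenvalue condition on weights.} Let $f \in \End_{\cat}(V)$ be transparent. Take $W = P$ in the second equation of Definition \ref{def:transparent}, and expand the double braiding $c_{W,V}\circ c_{V,W}$ via $\mathcal{R}^q = \check{\mathcal{R}}^q \mathcal{H}^q$. Comparing the parts of lowest order in the root vectors, one finds that on $\operatorname{im}(f)$ the Cartan factor $\mathcal{H}^q_{21}\mathcal{H}^q$ must act trivially after pairing with the weights of $P$. Concretely, for every weight $\mu$ occurring in $\operatorname{im} f$ and every weight $\nu$ of $P$, one should get $q^{-2\langle \mu,\nu\rangle}=1$.

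The weights of $P$ are $(r-1)\rho_{\p 0}$ minus arbitrary nonnegative combinations of roots, with the even coefficients ranging up to $r-1$. Differences of these weights therefore span $\Lambda_R$. It follows that $2\langle \mu, \alpha_i\rangle \in r\Z$ for all $i$, that is, $\mu \in \Lambda^{(1)}\cap\Lambda_R = \Lambda_{\FR}$ by Lemma \ref{lem:alternativeZ}.

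A cleaner route to the same conclusion uses the Hopf-link pairing $S'(V,P)$ together with Lemma \ref{lem:longHopf}. The aim is to show that $\operatorname{im}(f)$ is supported on weights in $\Lambda_{\FR}$ on which all $E_i,F_i$ act by zero.

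\textbf{Step 2: vanishing of the root vectors.} Next, use the $\check{\mathcal{R}}^q$ terms of the transparency equation applied with $W = P$. Because $P$ is generic, the vectors $F_\alpha w$ and $E_\alpha w$, for suitable $w\in P$, are nonzero and linearly independent. The terms $E_\alpha\otimes F_\alpha$ in $\check{\mathcal{R}}^q$ then must annihilate $\operatorname{im}(f)$, giving $E_\alpha f = 0$. Symmetrically, using the other braiding order gives $F_\alpha f=0$. Hence $\operatorname{im}(f)$ is a direct sum of one-dimensional weight modules $\sigma(z_i)$ with $z_i\in \FR$.

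\textbf{Step 3: factoring $f$.} Choose the inclusions $\iota_i : \sigma(z_i)\to V$ spanning $\operatorname{im} f$. We then need maps $\pi_i: V\to\sigma(z_i)$ with $f=\sum_i \iota_i\pi_i$, that is, $f$ must factor through its image as a morphism onto $\bigoplus_i \sigma(z_i)$. This factorization is automatic: $f$ corestricts to a surjection $V\to \operatorname{im} f\simeq\bigoplus_i\sigma(z_i)$, and we take $\pi_i$ to be its components.

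\textbf{Main obstacle.} The delicate step is Step 2, namely disentangling the contributions of $\check{\mathcal{R}}^q$ to the double braiding so as to isolate each $E_\alpha\otimes F_\alpha$ term. To do this I would filter by weight and use the PBW basis of $P$, including the odd root vectors. I would also check that $P$ is large enough that the relevant root-vector actions on $P$ are nonzero, which is where the typicality condition ($r\nmid n$) from Corollary \ref{cor:dimVrho} enters.
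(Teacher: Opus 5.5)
Your overall plan matches the paper's. You probe with $P=K((r-1)\rho_{\p 0})$, show every vector of $\operatorname{im} f$ is killed by all $E_i,F_i$, deduce its weight lies in $\Lambda_{\FR}$, and factor $f$ through its image. Step 3 is fine; it is what the paper means by "applying this argument to a weight basis." The gap is the one you flag yourself: you never actually extract $E_i f=F_i f=0$ from transparency, and the justification given for Step 1 does not work.

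By naturality, transparency says the double braiding is the identity on $P\otimes\operatorname{im} f$ and on $\operatorname{im} f\otimes P$. Take $x\in\operatorname{im} f$ of weight $\mu$ and an arbitrary weight vector $w\in P$ of weight $\nu$. Then $\check{\mathcal R}^q_{21}\mathcal H^q\check{\mathcal R}^q\mathcal H^q(x\otimes w)$ has, in bidegree $(\mu,\nu)$, not only $q^{-2\langle\mu,\nu\rangle}x\otimes w$. It also has cross terms proportional to $F^{y'}E^{y}x\otimes E^{y'}F^{y}w$ with $y,y'$ of equal total weight. Neither a "lowest order" comparison nor a weight filtration separates these. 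So the claim $q^{-2\langle\mu,\nu\rangle}=1$ for all weights $\nu$ of $P$ is unproved at that stage, and Step 2 cannot isolate the $E_\alpha\otimes F_\alpha$ terms. Your requirement that $E_\alpha w$ and $F_\alpha w$ both be non-zero for the same $w$ is exactly the situation where both quasi-$R$-matrices contribute. The Hopf-link route yields only a scalar identity, and Lemma \ref{lem:longHopf} assumes $V$ simple, so it cannot see $\operatorname{im} f$ weight by weight.

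The missing idea is to test against \emph{extremal} vectors of $P$, so that one of the two quasi-$R$-matrices acts trivially.

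\textbf{Killing $E_i$.} Apply the first transparency condition to $w_0\otimes v$, with $w_0\in P$ a highest-weight vector and $v$ a weight vector. Since $E_\alpha w_0=0$, $c_{P,V}(w_0\otimes v)$ is a non-zero multiple of $v\otimes w_0$. Hence $c_{V,P}(f(v)\otimes w_0)=\tau\check{\mathcal R}^q\mathcal H^q(f(v)\otimes w_0)$ must be a non-zero multiple of $w_0\otimes f(v)$. Only one quasi-$R$-matrix now appears. Its component of $P$-weight $(r-1)\rho_{\p 0}-\alpha_i$ is a non-zero multiple of $F_iw_0\otimes E_if(v)$, because $F_i$ is the only PBW monomial of weight $-\alpha_i$. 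Moreover $F_iw_0\neq 0$, since $P$ is the Kac module of a simple Verma module; the paper uses the whole basis $\prod_\alpha F_\alpha^{y_\alpha}w_0$. So $E_if(v)=0$.

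\textbf{Killing $F_i$.} Apply the second condition to $v\otimes w_-$, with $w_-$ a lowest-weight vector, so $F_\alpha w_-=0$. The same argument gives $F_if(v)=0$, provided $E_iw_-\neq0$. This follows from simplicity of $P$, which is where typicality ($r\nmid n$) enters. Indeed $P$ is spanned by the subalgebra generated by the $E_i$ applied to $w_-$, and a dimension count makes this a bijection. Note that $P$ lies in the critical degree $[0]$, so "genericity" is not the relevant property; this freeness is.

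\textbf{Concluding.} Step 1 is then unnecessary. From $[E_i,F_i]f(v)=0$ you get $q^{2\langle\mu,\alpha_i\rangle}=1$, i.e.\ $2\langle\mu,\alpha_i\rangle\in r\Z$. Since $V\in\cat_{[0]}$, $\mu\in\Lambda_R$, so $\mu\in\Lambda_{\FR}$ by Lemma \ref{lem:alternativeZ}. Your Step 3 then finishes the proof.
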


\begin{proof}
Let $w_0$ be a highest-weight vector of the simple module $K((r - 1) \rho_{\p 0}) \in \cat_{[0]}$ of Corollary \ref{cor:dimVrho}
and $v \in V$ of weight $\lambda$. The explicit description of the braiding shows that $c_{K((r - 1) \rho_{\p 0}),V}(w_0 \otimes v)$ is a non-zero scalar multiple of $v \otimes w_0$. By transparency, $c_{V,K((r - 1) \rho_{\p 0})}(f(v) \otimes w_0)$ is then a non-zero scalar multiple of $w_0 \otimes f(v)$. Using the weight basis
\[
\Big\{ \prod_{\alpha \in \Delta_+} F_{\alpha}^{y_{\alpha}}  \cdot w_0 \mid 0 \leq y_{\alpha} \leq m_{\p \alpha} \Big\}
\]
of $K((r - 1) \rho_{\p 0})$, we conclude that $\prod_{\alpha \in \Delta_+} E_{\alpha}^{y_{\alpha}} \cdot f(v) = 0$ for all $y_{\alpha}$ as above (not all zero). Using instead a lowest-weight vector of $K((r - 1) \rho_{\p 0})$ and the second of the transparency conditions gives $\prod_{\alpha \in \Delta_+}F_{\alpha}^{y_{\alpha}} \cdot f(v) = 0$ for all $y_{\alpha}$ as above (not all zero). We conclude that $[E_i,F_i] f(v) = 0$, that is, $2 \langle \lambda,\alpha_i \rangle \in r \Z$ for all $0 \leq i \leq n$. It follows from Example \ref{ex:oneDimModules} and the proof of Lemma \ref{lem:alternativeZ} that $f(v)$ generates a one-dimensional module of the form $\sigma(z)$, $z \in \FR$. Applying this argument to a weight basis of $V$ completes the proof.
\end{proof}

\begin{Thm}
\label{thm:modularity}
The category $\cat$ is relative modular with $\zeta = r^{n + 1}$.
\end{Thm}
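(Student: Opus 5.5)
We prove the theorem along the lines of \cite[Thm. 4.?]{derenzi2020} and \cite[\S 3]{anghel2021}. First we check that $\cat$ is pre-modular relative to $(\FR,\SSS)$. The grading by $\Gr$ with small symmetric $\SSS$ is Lemma \ref{lem:smallSymmetricSubset}. The free realization is Lemma \ref{lem:freeRealization}, the bicharacter is Lemma \ref{lem:bicharacter}, and the nonzero modified trace on projectives follows from Proposition \ref{prop:unimodularity}.

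For generic semisimplicity in the sense of Definition \ref{def:preMod}, fix $[\lambda]\in\Gr\setminus\SSS$. Choose as $\Theta([\lambda])$ the Kac modules $K(\lambda+\mu)$, one for each of the $r^{n+1}$ classes of $\mu\in\Lambda_R$ modulo $\Lambda_{\FR}$, together with a choice of parity. The proof of Theorem \ref{thm:genericSemiSimple} shows that every simple object of $\cat_{[\lambda]}$ is such a $K(\lambda')$ up to tensoring with some $\sigma(z)$. It also shows that $K(\lambda')\simeq K(\lambda'')\otimes\sigma(z)$ forces $\lambda'-\lambda''\in\Lambda_{\FR}$. Together with semisimplicity, this makes $\Theta([\lambda])\otimes\sigma(\FR)$ completely reduced and dominating. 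The index set $I_g$ then has $r^{n+1}$ elements; we need to count carefully that $|\Lambda_R/\Lambda_{\FR}|=r^{n+1}$, using $\langle w_i,\mu\rangle=d_i\mu_i$.

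The main step is the modularity identity \eqref{eq:mod}. We follow \cite[Prop. 3.? ]{derenzi2020}: encircling a strand colored by $V_i\in\Theta(g)$ with the Kirby color $\Omega_h$ produces an endomorphism of $V_i\otimes V_j^{\vee}$. We must show it equals $\zeta$ times the projection onto transparent parts. We compute the encircling by the Hopf pairing \eqref{eq:longHopfDim}. For $K(\mu)$ running over $\Theta(h)$, the sum
\[
\sum_{\mu}\qd(K(\mu))\langle S'(K(\mu),W)\rangle=\sum_\mu q^{-2\langle\lambda-\frac{\pi}{2},\mu-\frac{\pi}{2}\rangle}\cdot\frac{\qd(K(\mu))}{\qd(K(\mu))}\cdots
\]
reduces to a character sum over $\Lambda_R/\Lambda_{\FR}$ of $q^{-2\langle\lambda',\mu\rangle}$. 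Here one uses the first formula of Lemma \ref{lem:longHopf}, $\phi_{-2\lambda+\pi}(\sch K(\mu))$, for an arbitrary simple $W$ of highest weight $\lambda$. By orthogonality this sum vanishes unless $q^{-2\langle\lambda',\alpha\rangle}=1$ for all $\alpha\in\Lambda_R$, that is, unless $2\langle\lambda',\alpha_i\rangle\in r\Z$. In that case it equals $r^{n+1}$ times a root of unity determined by $\psi$.

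Consequently, $\Omega_h$ encircling any module $U\in\cat_{[0]}$ acts, weight space by weight space, as $r^{n+1}$ times the projection onto weights lying in $\Lambda^{(1)}$, up to the phase. This encircling morphism is transparent, since the Kirby color sum makes it insensitive to further encircling. By Lemma \ref{lem:transparentFactor}, it factors through the objects $\sigma(z)$. We apply this with $U=V_i\otimes V_j^{\vee}$. Only $z$ with $\sigma(z)$ a summand of $V_i\otimes V_j^\vee$ contribute, which forces $i=j$ and $z=0$ since $\Theta(g)$ is completely reduced. The resulting map is then $\zeta\,\qd(V_i)^{-1}$ times $\tcoev\circ\ev$, with $\zeta=r^{n+1}$.

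The expected obstacle is making this reduction precise. We must check that the phases $q^{-2\langle\lambda-\pi/2,\mu-\pi/2\rangle}$ from \eqref{eq:longHopfDim} sum exactly to $r^{n+1}$ and not to a Gauss-type sum. We must also handle non-highest-weight vectors of $U$ when extracting the projection. The first thing to try for the latter is to use the surjection of $\overline{U}^{[\lambda]}_q(\ospn)$ from Theorem \ref{thm:genericSemiSimple}, which reduces the identity to highest-weight vectors.
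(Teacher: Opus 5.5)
Your skeleton agrees with the paper's. The left-hand side of \eqref{eq:mod} is a transparent endomorphism of $V_i\otimes V_j^{\vee}$; this is the handle-slide property of Kirby colors, \cite[Lem.~5.9]{costantino2014}, and should be cited as such. Lemma~\ref{lem:transparentFactor} then factors it through objects $\sigma(z)$, and adjunction with complete reducedness forces $i=j$ and $z=0$. So it is a scalar multiple of $\tcoev_{V_i}\circ\ev_{V_i}$.

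The genuine gap is the value of that scalar, which is the whole content of $\zeta=r^{n+1}$. The mechanism you propose for it cannot work, for three reasons.

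First, the target morphism is not ``$\zeta$ times a projection''. $\ev_{V_i}\circ\tcoev_{V_i}$ is the categorical superdimension of the projective module $V_i$, and it vanishes because $\unit$ is not projective. Hence $(\tcoev_{V_i}\circ\ev_{V_i})^2=0$, and $\unit$ is not a summand of $V_i\otimes V_i^{\vee}$: the factorization through $\sigma(z)$ goes through nonzero maps, not through summands.

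Second, this morphism has rank one, whereas already the weight-$0$ subspace of $V_i\otimes V_i^{\vee}$ has dimension $\sum_{\nu}(\dim V_i[\nu])^2>1$. So the encircling cannot act ``weight space by weight space as $r^{n+1}$ times the projection onto weights in $\Lambda^{(1)}$''. On a non-simple $U$ it is not even diagonal in a weight basis, since the $\check{\mathcal{R}}^q$-terms of the double braiding survive the partial trace.

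Third, your one-strand sum is not a character sum. In \eqref{eq:longHopfDim} the denominator of $\langle S^{\prime}(K(\lambda),K(\mu))\rangle$ is $\qd(K(\mu))$, the modified dimension of the \emph{strand}, not of the circle. So $\sum_{\mu}\qd(K(\mu))\langle S^{\prime}(K(\mu),W)\rangle$ retains the $\mu$-dependent weight $\qd(K(\mu))$. Its odd-root factors $\{\langle\mu-\frac{\pi}{2},\alpha\rangle\}_q^{-1}$ are not a finite combination of characters of $\Lambda_R/\Lambda_{\FR}$, so orthogonality does not apply.

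The missing step, which is the paper's route, is to fix the scalar by taking modified traces of both sides while keeping both strands. On the right, $\mt_{K(\alpha)\otimes K(\alpha)^{\vee}}(\tcoev\circ\ev)=\qd(K(\alpha))$. On the left, the summand indexed by $\mu\in\gamma+I$ closes up to a chain link $K(\alpha)$--$K(\mu)$--$K(\alpha)^{\vee}$. Cut it along the $K(\mu)$ component instead; this gives $\qd(K(\mu))^2\langle S^{\prime}(K(\alpha),K(\mu))\rangle\langle S^{\prime}(K(\alpha)^{\vee},K(\mu))\rangle$.

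By Lemma~\ref{lem:dualKac}, $K(\alpha)^{\vee}\simeq K(-\alpha+\pi)$, and $(-\alpha+\pi)-\frac{\pi}{2}=-(\alpha-\frac{\pi}{2})$. So by \eqref{eq:longHopfDim} the two phases are $q^{\mp2\langle\alpha-\frac{\pi}{2},\mu-\frac{\pi}{2}\rangle}$ and cancel identically, as do the modified dimensions. Every $\mu$ therefore contributes the same amount (exactly $1$ with the normalization of \eqref{eq:longHopfDim}). There is no Gauss sum and no orthogonality: the trace is $|\Lambda_R/\Lambda_{\FR}|=r^{n+1}$, and the left-hand side equals $\delta_{ij}\,r^{n+1}\qd(V_i)^{-1}\,\tcoev_{V_i}\circ\ev_{V_i}$.

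This cancellation needs $V$ and $V^{\vee}$ together, which is why analysing the encircling of a single strand cannot see it.

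Finally, for the count: $\langle w_i,\mu\rangle=d_i\mu_i$ only identifies $r\Lambda_R$. You also need $\Lambda_{\FR}=r\Lambda_R$, which holds because the symmetrized Cartan matrix has determinant $-4$, a unit modulo odd $r$.
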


\begin{proof}
It remains to establish the existence of a relative modularity parameter $\zeta$. Consider Definition \ref{def:modG} with $h = [\gamma]$ and $g = [\lambda]$ with $V_i=K(\alpha)$ and $V_j = K(\beta)$. The morphism $f_{[\gamma]; \alpha, \beta} \in \End_{\cat}(K(\alpha) \otimes K(\beta)^{\vee})$ determined by the left-hand side of diagram \eqref{eq:mod} is transparent in $\cat_{[0]}$; see \cite[Lem. 5.9]{costantino2014}. By Lemma \ref{lem:transparentFactor}, we can write
\[
f_{[\gamma]; \alpha, \beta}
=
\sum_{i=1}^m h_{[\gamma]; \alpha, \beta, i} \circ g_{[\gamma]; \alpha, \beta, i}
\]
for some
\[
g_{[\gamma]; \alpha, \beta, i} \in \Hom_{\cat}(K(\alpha) \otimes K(\beta)^{\vee},\sigma(z_i)),
\qquad
h_{[\gamma]; \alpha, \beta, i} \in \Hom_{\cat}(\sigma(z_i) ,K(\alpha) \otimes K(\beta)^{\vee}).
\]
Noting that
\[
\Hom_{\cat}(K(\alpha) \otimes K(\beta)^{\vee}, \sigma(z_i)) \simeq \Hom_{\cat}(K(\alpha), K(\beta) \otimes \sigma(z_i)),
\]
we conclude that $z_i =0 $ and $\alpha = \beta$ by generic semisimplicity. We may therefore assume that $f_{[\gamma]; \alpha, \beta}$ is a scalar multiple of $\tcoev_{K(\alpha)} \circ \ev_{K(\alpha)}$. To compute the scalar, we compare the modified traces of $\tcoev_{K(\alpha)} \circ \ev_{K(\alpha)}$ and $f_{[\gamma]; \alpha, \alpha}$. The former is $\qd(K(\alpha))$. To compute the latter, write
\[
f_{[\gamma]; \alpha, \alpha} = \sum_{\mu \in \gamma +  I} \qd(K(\mu))f_{\mu,\alpha,\alpha},
\]
where $I \subset \Lambda_R$ is a choice of representatives of $\Lambda_R \slash \Lambda_{\FR}$. We compute
\begin{eqnarray*}
\mt_{K(\alpha) \otimes K(\alpha)^{\vee}}(f_{[ \gamma ],\alpha, \alpha})
&=&
\sum_{\mu \in \gamma + I}\qd(K(\mu)) \mt_{K(\alpha) \otimes K(\alpha)^{\vee}}(f_{\mu,\alpha, \alpha})\\
&=&
\sum_{\mu \in \gamma + I} \qd(K(\mu)) \qd(K(\alpha))^2 \langle S^{\prime}(K(\mu), K(\alpha)^{\vee}) \rangle \langle S^{\prime}(K(\alpha), K(\mu)) \rangle \\
&=&
\sum_{\mu \in \gamma + I} \qd(K(\alpha))
=
\qd(K(\alpha)) r^{n+1}.
\end{eqnarray*}
The second equality follows from isotopy invariance and defining properties of $\mt$. The third equality follows from Lemma \ref{lem:dualKac} and equation \eqref{eq:longHopfDim}. We conclude that $\zeta = r^{n + 1}$.
\end{proof}

\subsection{Stabilization coefficients}
\label{sec:stabCoeff}

We compute the stabilization coefficients $\Delta_{\pm}$ from Definition \ref{def:ndeg}.

\begin{Lem}
\label{lem:invt}
Let $\alpha \in \Delta^+$. The function $\Cart^{\vee} \rightarrow \C$, $\lambda \mapsto \left\{r\langle\lambda - \frac{\pi}{2},\alpha \rangle \right\}_q$, descends to a function $\Gr \rightarrow \C$.
\end{Lem}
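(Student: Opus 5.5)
We need to show: for $\mu\in\Lambda_R$, $\{r\langle\lambda+\mu-\frac{\pi}{2},\alpha\rangle\}_q=\{r\langle\lambda-\frac{\pi}{2},\alpha\rangle\}_q$. Since $\{x\}_q = q^x - q^{-x}$ and $q^x = e^{2\pi i x/r}$, we have $\{rx\}_q = e^{2\pi i x}-e^{-2\pi i x}$. This depends only on $x$ modulo $\Z$. So the claim reduces to $\langle \mu,\alpha\rangle\in\Z$ for all $\mu\in\Lambda_R$ and $\alpha\in\Delta^+$.

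Step 1: integrality of the pairing on the root lattice. Both $\mu$ and $\alpha$ are integer combinations of the simple roots $\alpha_0,\dots,\alpha_n$. We have $\langle \alpha_i,\alpha_j\rangle = d_i a_{ij}$, and the symmetrized Cartan matrix displayed in Section \ref{sec:ospn} has integer entries. By bilinearity, $\langle \mu,\alpha\rangle\in\Z$.

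Step 2: conclude. Write $x=\langle\lambda-\frac{\pi}{2},\alpha\rangle$ and $k=\langle\mu,\alpha\rangle\in\Z$. Then
\[
\{r(x+k)\}_q = e^{2\pi i (x+k)} - e^{-2\pi i(x+k)} = \{rx\}_q .
\]
Hence the function is constant on cosets $\lambda+\Lambda_R$, and so descends to $\Gr=\Cart^{\vee}/\Lambda_R$.

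There is no real obstacle. The only point to check is that $\frac{\pi}{2}$ enters merely as a fixed shift, so no parity issue arises. We never need $\frac{\pi}{2}\in\Lambda_R$: the shift is the same for $\lambda$ and $\lambda+\mu$, and only the difference $\mu$ matters.
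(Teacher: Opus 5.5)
Your proof is correct and is essentially the paper's argument: both rest on the integrality of $\langle-,-\rangle$ on $\Lambda_R$ together with $q^r=1$, so that shifting $\lambda$ by $k\in\Lambda_R$ leaves $q^{\pm r\langle \lambda-\frac{\pi}{2},\alpha\rangle}$ unchanged. The paper states this tersely as $q^{-2r\langle k,\alpha\rangle}=1$. Your rewriting $\{rx\}_q=e^{2\pi\I x}-e^{-2\pi\I x}$ makes the invariance under $x\mapsto x+\Z$ explicit and is slightly cleaner.
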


\begin{proof}
Since $\langle-,-\rangle$ is an integral form on $\Lambda_R$ and $q$ is an $r$\textsuperscript{th} root of unity, we have $q^{-2 r \langle k, \alpha \rangle} = 1$ for all $k \in \Lambda_R$ and $\alpha \in \Delta^+$. The lemma follows.
\end{proof}

Let $b \in \{ \pm \}$. For $V \in \cat$, introduce the notation $V^+ = V$ and $V^- = V^{\vee}$. Then the stabilization coefficients are
\begin{multline*}
\Delta_b
=
\sum_{k \in I} \qd(K(\lambda+k)) \langle \theta^b_{K(\lambda-r+1)} \rangle \langle \theta^b_{K(\lambda+k)^{\vee}} \rangle \langle S^{\prime}(K(\lambda+k)^{-b},K(\lambda-r+1)) \rangle \\
=
q^{\frac{b}{2} \langle \pi, \pi \rangle} \sum_{k \in I} q^{-b \langle k, k \rangle} \frac{\mathsf{d}(K(\lambda + k))}{\mathsf{d}(K(\lambda))}
=
q^{\frac{b}{2} \langle \pi, \pi \rangle} \sum_{k \in I} q^{-b \langle k, k \rangle} \prod_{\alpha \in \Delta^+} \frac{\left\{ \langle\lambda + k - \frac{\pi}{2},\alpha \rangle \right\}^{\epsilon(\alpha)}_q }{\left\{ \langle\lambda - \frac{\pi}{2},\alpha \rangle \right\}^{\epsilon(\alpha)}_q}.
\end{multline*}
The first equality follows from the definition of $\Delta_b$, the second from Proposition \ref{prop:ribCat} and Lemma \ref{lem:longHopf} and the third from equation \eqref{eq:modDimKac} and Lemma \ref{lem:invt}. By \cite[Lem. 5.10]{costantino2014}, $\Delta_b$ is independent of the generic weight $\lambda$ used in its computation. With this in mind, for each $t \in \R$ set $\lambda_t = -\I t \sum_{i=0}^n d_i w_i \in \Cart^{\vee}$. This choice of $\lambda_t$ ensures that $\lim_{t \rightarrow \infty} q^{\langle \lambda_t, \alpha \rangle} = \infty$ for all $\alpha \in \Delta^+$. Using this, we compute
\[
\Delta_b
=
\lim_{t \rightarrow \infty} q^{\frac{b}{2} \langle \pi, \pi \rangle} \sum_{k \in I} q^{-b \langle k, k \rangle} \prod_{\alpha \in \Delta^+} \frac{\left\{ \langle \lambda_t + k - \frac{\pi}{2},\alpha \rangle \right\}^{\epsilon(\alpha)}_q }{\left\{ \langle \lambda_t - \frac{\pi}{2},\alpha \rangle \right\}_q^{\epsilon(\alpha)}} \\
=
q^{\frac{b}{2} \langle \pi, \pi \rangle} \sum_{k \in I} q^{-b \langle k, k \rangle +\langle k,2 \rho \rangle}.
\]

\begin{Lem}
\label{lem:stabCompConj}
The stabilization coefficient $\Delta_+$ is the complex conjugate of $\Delta_-$.
\end{Lem}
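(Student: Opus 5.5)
The plan is to work directly from the closed formula just derived,
\[
\Delta_b = q^{\frac{b}{2}\langle \pi,\pi\rangle}\sum_{k\in I} q^{-b\langle k,k\rangle + \langle k, 2\rho\rangle},
\]
and compare the $b=+$ and $b=-$ expressions term by term. Since $q = e^{\frac{2\pi \I}{r}}$ has modulus one, we have $\overline{q^{z}} = q^{-\bar z}$. Hence, whenever the exponent is real, complex conjugation simply flips its sign. All quantities $\langle \pi,\pi\rangle$, $\langle k,k\rangle$ and $\langle k,2\rho\rangle$ are real, even rational, because $\pi,k\in\Lambda_R$, $2\rho \in \Lambda_R$ and $\langle-,-\rangle$ is integral on $\Lambda_R$. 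Taking conjugates therefore gives
\[
\overline{\Delta_+} = q^{-\frac12\langle\pi,\pi\rangle}\sum_{k\in I} q^{\langle k,k\rangle - \langle k,2\rho\rangle}.
\]
This differs from $\Delta_-$ only by the sign of the linear term $\langle k,2\rho\rangle$.

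The key step is to reindex the sum by $k\mapsto -k$. First, $I$ is a set of representatives of $\Lambda_R/\Lambda_{\FR}$, and the summand must be checked to depend only on the class of $k$ modulo $\Lambda_{\FR}$. This independence is implicit in the derivation, since the sum was originally over the simple objects of $\Theta(g)$, but I would verify it directly. For $z\in\Lambda_{\FR}$ we have $\langle z, \Lambda_R\rangle\subset r\Z$ by Lemma \ref{lem:alternativeZ} and its proof. The remaining term is $\langle z,z\rangle$, and we need $q^{-b\langle z,z\rangle}=1$, i.e. $\langle z, z\rangle\in r\Z$. This follows because $z\in\Lambda_R$, so $\langle z,z\rangle$ is a $\Z$-combination of the pairings $\langle z,\alpha_i\rangle\in r\Z$.

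Next, $k\mapsto -k$ permutes the classes of $\Lambda_R/\Lambda_{\FR}$, so we may replace $I$ by $-I$. Under this substitution $\sum_{k} q^{\langle k,k\rangle - \langle k,2\rho\rangle}$ becomes $\sum_k q^{\langle k,k\rangle + \langle k,2\rho\rangle}$. Combined with the prefactor $q^{-\frac12\langle\pi,\pi\rangle}$, this is exactly $\Delta_-$.

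The only potential obstacle is the well-definedness step, namely checking that the cross term $2\langle k,z\rangle$ and the terms $\langle z,z\rangle$ and $\langle z,2\rho\rangle$ all lie in $r\Z$. Both $\langle z, z\rangle$ and $\langle z,2\rho\rangle$ follow since $z, 2\rho\in\Lambda_R$ pair into $r\Z$ against $z$. For $\langle k, z\rangle$, since $z\in\Lambda_{\FR}$ we have $\langle \alpha_i, z\rangle\in r\Z$ for all $i$, hence $\langle k,z\rangle \in r\Z$ for every $k\in\Lambda_R$. The remaining steps are routine.
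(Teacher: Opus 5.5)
Your proposal is correct and takes essentially the paper's route. The paper's ``short calculation'' is exactly this: conjugate the closed formula for $\Delta_b$, using that every exponent is real and $|q|=1$, then reindex the sum by $k \mapsto -k$. Your explicit check that the summand depends only on the class of $k$ in $\Lambda_R/\Lambda_{\FR}$ supplies the detail that justifies the reindexing, which the paper leaves implicit.
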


\begin{proof}
This follows from a short calculation using the above formula for $\Delta_b$.
\end{proof}


In view of Lemma \ref{lem:stabCompConj}, we restrict attention to $\Delta_+$. 
Completing the square gives
\[
\Delta_+ = q^{\frac{1}{2} \langle \pi, \pi \rangle + \langle \rho, \rho \rangle} \sum_{k \in I} q^{-\langle k, k\rangle}.
\]
Given an (ortho)symplectic Lie (super)algebra $\mathfrak{g}$ with Cartan subalgebra $\Cart$, define $I_{\mathfrak{g}} = \Lambda_R \slash r \Lambda_R$ and let $\langle-,-\rangle_{\mathfrak{g}}$ be the symmetric bilinear form on $\Cart^{\vee}$ defined by the symmetrized Cartan matrix. We use the decomposition $I_{\ospn} \simeq \Z \slash r \Z \cdot \alpha_0 \times I_{\mathfrak{sp}(2n)}$ to compute
\[
\sum_{k \in I_{\ospn}} q^{-\langle k, k\rangle_{\ospn}}
=
\sum_{l=0}^{r-1} \sum_{k \in I_{\mathfrak{sp}(2n)}} q^{-\langle l \alpha_0 + k, l \alpha_0 + k \rangle_{\ospn}}
=
\sum_{k \in I_{\mathfrak{sp}(2n)}} \left( \sum_{l=0}^{r-1} q^{2l k_1} \right) q^{-\langle k, k \rangle_{\mathfrak{sp}(2n)}},
\]
the second equality following from the explicit form of the symmetrized Cartan matrix of $\ospn$. Since $r$ is odd, we have $\sum_{l=0}^{r-1} q^{2l k_1} = r \delta_{k_1,0}$ which gives
\[
\sum_{k \in I_{\ospn}} q^{-\langle k, k\rangle_{\ospn}}
=
r \sum_{k \in I_{\mathfrak{sp}(2n)}} \delta_{k_1,0} q^{-\langle k, k \rangle_{\mathfrak{sp}(2n)}}
=
r \sum_{k \in I_{\mathfrak{sp}(2n-2)}} q^{-\langle k, k \rangle_{\mathfrak{sp}(2n-2)}}.
\]
The second equality follows from the observation that deleting the first row and column of the symmetrized Cartan matrix of $\mathfrak{sp}(2n)$---corresponding to setting $k_1=0$---gives that of $\mathfrak{sp}(2n-2)$. Since $\langle -, - \rangle_{\mathfrak{sp}(2n-2)}$ is negative definite, the final sum can be computed using standard quadratic Gauss sum techniques; see \cite{berndt1998}. We find
\[
\sum_{k \in I_{\mathfrak{sp}(2n)}} q^{-\langle k, k \rangle_{\mathfrak{sp}(2n)}}
=
\epsilon(r,n) r^{\frac{n}{2}},
\]
where
\[
\epsilon(r,n)
=
\begin{cases}
1 & \mbox{if } r \equiv 1 \mod 4, \\
i^{-n} & \mbox{if } r \equiv 3 \mod 4.
\end{cases}
\]
Putting this all together, we have
\[
\Delta_+
=
q^{\frac{1}{2} \langle \pi, \pi \rangle + \langle \rho, \rho \rangle} \epsilon(r,n-1) r^{\frac{n-1}{2}}.
\]
Direct computations, together with Lemma \ref{lem:rhoPairings}\eqref{ite:oddRhoSimp}, give
\[
\frac{1}{2} \langle \pi, \pi \rangle + \langle \rho, \rho \rangle
\equiv
\frac{n(n-1)(2n-1)}{2}
\mod r.
\]


\newcommand{\etalchar}[1]{$^{#1}$}
\providecommand{\bysame}{\leavevmode\hbox to3em{\hrulefill}\thinspace}
\providecommand{\MR}{\relax\ifhmode\unskip\space\fi MR }
\providecommand{\MRhref}[2]{%
  \href{http://www.ams.org/mathscinet-getitem?mr=#1}{#2}
}
\providecommand{\href}[2]{#2}

\end{document}